\DeclareMathOperator{\supp}{supp} 
\DeclareMathOperator{\diag}{diag} 
 \DeclareMathOperator{\dist}{d}
\newcommand{\la}{\langle} \newcommand{\ra}{\rangle}
\newcommand{\ls}{\lesssim} \newcommand{\gs}{\gtrsim}
\newcommand{\R}{\mathbb{R}} \newcommand{\C}{\mathbb{C}}
\newcommand{\N}{\mathbb{N}} \newcommand{\Z}{\mathbb{Z}}
\newcommand{\Sp}{\mathbb{S}} 
\newcommand{\al}{\alpha} \newcommand{\om}{\omega}
 \newcommand{\ka}{\kappa}
\newcommand{\F}{\mathcal{F}} 
 \newcommand{\Ka}{\mathcal{K}}
\newcommand{\Th}{\Theta} \newcommand{\lS}{l^2S}
\newtheorem{thm}{Theorem} \newtheorem{cor}[thm]{Corollary}
\newtheorem{pro}[thm]{Proposition} \newtheorem{lem}[thm]{Lemma}
\theoremstyle{remark} \newtheorem{rmk}{Remark}
\theoremstyle{definition} 
\numberwithin{equation}{section} \numberwithin{thm}{section}
\begin{document}

\title[The cubic Dirac equation]{The cubic Dirac equation: Small
  initial data in $H^{\frac12}(\R^2)$}

\author[I.~Bejenaru]{Ioan Bejenaru} \address[I.~Bejenaru]{Department
  of Mathematics, University of California, San Diego, La Jolla, CA
  92093-0112, USA} \email{ibejenaru@math.ucsd.edu}

\author[S.~Herr]{Sebastian Herr} \address[S.~Herr]{Fakult\"at f\"ur
  Mathematik, Universit\"at Bielefeld, Postfach 10 01 31, 33501
  Bielefeld, Germany} \email{herr@math.uni-bielefeld.de}

\begin{abstract}
  Global well-posedness and scattering for the cubic Dirac equation
  with small initial data in the critical space $H^{\frac12}(\R^2)$ is
  established. The proof is based on a sharp endpoint Strichartz
  estimate for the Klein-Gordon equation in dimension $n=2$, which is
  captured by constructing an adapted systems of coordinate frames.
\end{abstract}

\subjclass[2010]{35Q41 (Primary); 35Q40, 35L02, 35L05 (Secondary)}
\keywords{Klein-Gordon equation, cubic Dirac equation, Strichartz
  estimate, well-posedness, scattering}

\maketitle

\section{Introduction and main results}\label{sect:intro}
In this paper we continue our investigation initiated in \cite{BH}
regarding the full range of Strichartz estimates available for the
Klein-Gordon equation, with the particular goal of providing
$L^2L^\infty$ type estimates. As an application we prove global
well-posedness and scattering for the cubic Dirac equation with small
data in the critical space.

For fixed $m>0$, we consider the (scalar) homogeneous
Klein-Gordon equation
\begin{equation} \label{KG} \Box u + m^2 u =0, \qquad u: \R
  \times \R^n \rightarrow \R \ \mbox{or} \ \C.
\end{equation}
The validity of Strichartz estimates for solutions $u$ of this
equation is a fundamental and well-studied problem.  In the low
frequency regime, the dispersive properties of the Klein-Gordon
equation are similar to the Schr\"odinger equation, i.e.\ the decay
rate of the fundamental solution is $t^{-\frac{n}2}$. In the high frequency regime they are
similar to the wave equation, i.e.\ the decay rate is
$t^{-\frac{n-1}2}$.  In the high frequency regime there is also a
penalized Schr\"odinger-type decay: the fundamental solution localized at frequency $2^k$ decay as
$2^{k} t^{-\frac{n}2}$; the penalization is due to the small curvature
of the characteristic surface.  If one is not concerned with sharp
estimates, in the high frequency regime one could trade regularity for
having access to the better decay $t^{-\frac{n}2}$. Such an approach
severely limits the range of applications, in particular to low
regularity nonlinear problems.

The decay rate of the fundamental solution plays a crucial role in determining the
range of available Strichartz estimates.  It is well-known that the
endpoint Strichartz $L^2_t L^\infty_x$ estimate fails for the wave
equation in dimensions $n=3$ and for the Schr\"odinger equation in
dimension $n=2$, see \cite{M-S,Tao-bil}. For the Klein-Gordon
equation \eqref{KG} in three dimensions, the endpoint $L^2_t
L^\infty_x$ estimate does not fail if one allows for a loss of
regularity, see \cite{MNO}. However, the sharp $L^2_t L^\infty_x$
estimate (dictated by scaling) fails to hold true.  In \cite{BH} we
provided a microlocal replacement of the missing sharp endpoint $L^2_t
L^\infty_x$ Strichartz estimate in dimension $n=3$ by using adapted frames.

In dimension $n=2$ the same problem becomes significantly more
difficult since both endpoint Strichartz
estimates for the wave equation, $L^4_t L^\infty_x$,
and for the Schr\"odinger equation, $L^2_tL^\infty_x$, fail to hold. In this paper we address this problem
by providing $L^2 L^\infty$ estimates in adapted frames. For the Klein-Gordon equation in dimension $n=2$, to our best
knowledge, these estimates are novel in literature.

Throughout the rest of this paper, we fix the physical dimension
$n=2$.  In applications to nonlinear problems, see \cite{MNNO,BH} for
the cubic Dirac equation in three dimensions, the endpoint
Strichartz estimate and the $L^\infty L^2$ energy estimate imply a
bilinear $L^2_{t,x}$ estimate via the toy scheme
\[
\| u \cdot v \|_{L^2_{t,x}} \leq \| u \|_{L^2 L^\infty} \| v
\|_{L^\infty L^2}.
\]
Since the $L^2 L^\infty$ estimate will be established in adapted
frames, energy estimates in similar frames are needed to recover the
above $L^2_{t,x}$ bilinear estimate.  As in dimension $n=3$ in
\cite{BH}, combining the energy and the Strichartz estimate to derive
a uniform $L^2$ estimate is only possible in presence of a null
structure, see Section \ref{sect:setupD}.

The idea to use adapted frames in order to find a replacement for the
missing $L^2 L^\infty$ endpoint Strichartz estimate is due to Tataru
\cite{tat}, and was motivated by the Wave maps problem. In the
context of the Schr\"odinger equation, this was done for solving the
Schr\"odinger Map problem in two dimensions in \cite{bikt}. Naively,
one may expect that by using the structures in \cite{tat} and
\cite{bikt}, one can address the same problem for the Klein-Gordon,
but this is not the case. The reason is two-fold: there are no
straight lines (zero curvature submanifolds) foliating the
characteristic surface so as to emulate the Wave Equation
construction; trading regularity in order to rely only on the
Schr\"odinger equation would provide non-optimal estimates.
 
Instead, our current work builds on ideas from \cite{tat} and
\cite{bikt} and brings new ideas to provide a more complex
construction well-adapted the geometry of the characteristic surface
for the Klein-Gordon equation.

As an application, we study the cubic Dirac equation in dimension
$n=2$ at the critical regularity: Fix $M> 0$. Using the summation
convention, the cubic Dirac equation for the spinor field $\psi: \R
\times \R^2 \to \C^2$ is
\begin{equation} \label{eq:dirac} (-i \gamma^\mu \partial_\mu + M )
  \psi= \la \gamma^0 \psi, \psi \ra \psi,
\end{equation}
where $\gamma^\mu\in \C^{2\times 2}$ are the Dirac matrices
\[
\gamma^0= \left( \begin{array}{cc} 1 & 0 \\ 0 & -1 \end{array}
\right) , \qquad \gamma^1=\left( \begin{array}{cc} 0 & 1 \\
    -1 & 0 \end{array} \right), \qquad \gamma^2=\left( \begin{array}{cc} 0 & -i \\
    -i & 0 \end{array} \right),
\]
and $\langle \cdot, \cdot \rangle$ is the standard inner product on
$\C^2$.

The matrices $\gamma^\mu$ satisfy $ \gamma^\alpha \gamma^\beta +
\gamma^\beta \gamma^\alpha = 2 g^{\alpha \beta} I_2$, where
$(g^{\alpha \beta})= \diag(1,-1)$.  By adapting the set of matrices,
the equation \eqref{eq:dirac} can be written in any spatial
dimension. We refer the reader to \cite{flr,soler} for the physical
background for this equation.

The $n$-dimensional version of \eqref{eq:dirac} becomes critical in
$H^{\frac{n-1}2}(\R^n)$ in the sense that it is (approximately)
invariant under rescaling of solutions. In three dimensions the
equation was studied extensively, see \cite{ev,MNO,MNNO,sv,cv,M88} and
references therein. The global well-posedness for small data in the critical space was established
by the authors in \cite{BH}.

In dimension $n=2$ and $M \ne 0$, we are aware of only two results:
\cite{Pe13,Pe13-corr} where Pecher establishes local well-posedness of
the equation with initial data in $H^s(\R^2)$ for $s > \frac12$ and
\cite{BC} where Bournaveas and Candy establish local well-posedness of
the equation with initial data in $H^\frac12(\R^2)$.  To our best
knowledge, no global well-posedness result is known so far.
The case $M=0$ has been settled in \cite{BC} where Bournaveas and
Candy also prove global well-posedness and scattering for small
initial data in $\dot H^\frac12(\R^2)$, see more commentaries below
about this case.

Our main result in this paper is

\begin{thm}\label{thm:main} Let $M\ne 0$.
  The initial value problem associated to the cubic Dirac equation
  \eqref{eq:dirac} is globally well-posed for small initial data $\psi(0)\in H^\frac12(\R^2)$. Moreover, these solutions scatter to free
  solutions for $t\to \pm \infty$.
\end{thm}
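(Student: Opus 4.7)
\medskip

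\textbf{Proof plan.} The plan is to reduce the cubic Dirac equation to a system of half-Klein-Gordon equations and then to a trilinear estimate in function spaces adapted to the characteristic surface. First I would diagonalize the free Dirac operator by spectral projectors $\Pi_\pm(D)=\frac12\bigl(I\pm\la D\ra^{-1}(M\gamma^0-i\gamma^0\gamma^j\partial_j)\bigr)$ and write $\psi=\psi_++\psi_-$ with $\psi_\pm=\Pi_\pm\psi$. Each component then solves a half-wave equation $(i\partial_t\mp\la D\ra)\psi_\pm=\Pi_\pm(\la\gamma^0\psi,\psi\ra\psi)$, so that $(\Box+M^2)\psi_\pm=0$ on the linear side. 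The algebraic identity $\Pi_\pm\gamma^0\Pi_\pm=O(\la D\ra^{-1}\text{angle})$ encodes the spinorial null structure: the quadratic factor $\la\gamma^0\psi,\psi\ra$ vanishes when the two factors have parallel high frequencies and the same sign $\pm$, which is precisely the resonant scenario that would otherwise obstruct a critical trilinear bound.

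Next I would construct a resolution space $F^{1/2}$ as an $\ell^2$-sum over dyadic frequency blocks with norm $\|\psi\|_{F^{1/2}}^2=\sum_k 2^k\|P_k\psi\|_{F_k}^2$, where $F_k$ is a $U^2_{\pm\la D\ra}$/$V^2_{\pm\la D\ra}$ atomic space built from the $L^\infty_tL^2_x$ energy norm, classical Strichartz norms, and crucially the endpoint $L^2_tL^\infty_x$ norm measured in the adapted coordinate frames constructed earlier in this paper. The dual space $N^{1/2}$ is defined so that the linear inhomogeneous Klein-Gordon estimate $\|\psi\|_{F^{1/2}}\ls \|\psi(0)\|_{H^{1/2}}+\|\text{rhs}\|_{N^{1/2}}$ holds by duality. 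Because atoms in $U^2$ have absolutely continuous evolution in the linear variable, scattering for small solutions is automatic from the resulting global bound.

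The core nonlinear step is the trilinear estimate
\[
\bigl\|\Pi_\pm\bigl(\la\gamma^0\psi_1,\psi_2\ra\psi_3\bigr)\bigr\|_{N^{1/2}}\ls \|\psi_1\|_{F^{1/2}}\|\psi_2\|_{F^{1/2}}\|\psi_3\|_{F^{1/2}}.
\]
After Littlewood-Paley and angular decompositions, I would pair the factor in $N^{1/2}$ with one of the inputs, move to an $L^2_{t,x}$ duality bound, and then estimate the two remaining factors via the toy bilinear scheme $\|uv\|_{L^2_{t,x}}\le\|u\|_{L^2_tL^\infty_x}\|v\|_{L^\infty_tL^2_x}$, with $u$ controlled by the adapted-frame endpoint Strichartz of this paper and $v$ controlled by the energy estimate in the \emph{same} adapted frame. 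For non-aligned frequency interactions the null structure forces a transverse angle, allowing a standard bilinear Fourier restriction bound and leaving a geometric factor that sums over the frame tiles. A contraction-mapping argument in a ball of $F^{1/2}$ then closes the small-data theory.

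The main obstacle is compatibility of the adapted frames across the quadratic pairing: the sharp $L^2_tL^\infty_x$ estimate only holds in frames suited to patches of the characteristic surface, so the corresponding energy estimate (see the setup discussion in Section~\ref{sect:setupD}) must be developed in the matching frames and the spinorial null cancellation must be quantitatively compatible with the angular tile geometry. Once the frame-adapted square function and the null-form angle gain are aligned, the trilinear estimate, hence the theorem, follows.
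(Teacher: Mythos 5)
Your overall route is the same as the paper's: diagonalize via $\Pi_\pm(D)$ into half Klein--Gordon equations, exploit the spinorial null structure, build $V^2$-based dyadic spaces that also carry the adapted-frame $L^2L^\infty$ endpoint and matching $L^\infty L^2$ energy norms, reduce the nonlinear bound by duality to bilinear $L^2_{t,x}$ estimates of the form $\|uv\|_{L^2}\le\|u\|_{L^2_tL^\infty_x}\|v\|_{L^\infty_tL^2_x}$ in a common frame, and close by contraction, with scattering coming for free from the $V^2$ structure. So the architecture is right, and your final paragraph correctly identifies frame compatibility as the crux.

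There is, however, a concrete gap in the step ``for non-aligned frequency interactions the null structure forces a transverse angle\dots'' together with the implicit assumption that a single family of adapted frames (angle-indexed null-type frames) suffices. In $n=2$ this is exactly where the naive plan fails: for comparable input frequencies the parallel and near-parallel interactions are not rescued by the null form (the angle gain degenerates precisely there, and the energy estimate \eqref{DH} in the $(\lambda,\omega)$ frames blows up in the range $|l-\min(j,k)|\le 9$), which is why the paper must (a) use a genuinely bi-parameter frame family $\Lambda_{k,\kappa}$ indexed by both direction $\omega$ and a frequency-dependent discretization of the speed $\lambda$, since the sharp $L^2_tL^\infty_x$ endpoint fails for both the wave and the Schr\"odinger regimes in 2D, and (b) introduce a second, structurally different family $\Omega_{k,\kappa}$ with norms $L^2_{x^2_\Theta}L^\infty_{(t,x^1)_\Theta}$ (Theorem \ref{thm:estr}, \eqref{Sloc2}, and the matching energy bound \eqref{DH2}), which exploits the fact that Klein--Gordon waves of different frequencies travel at different speeds; the exactly parallel same-frequency block must even be subtracted in the bilinear estimate (Proposition \ref{Pbil} iii)) and handled separately by Strichartz/modulation arguments (using Lemma \ref{lem:mod}). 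In addition, closing the iteration at critical regularity requires the auxiliary high-modulation $L^p_tL^2_x$ component ($\frac43<p<\frac85$) in both $S_k^\pm$ and $N_k^\pm$ to avoid the Besov-type summation obstruction; your $F^{1/2}/N^{1/2}$ setup as described does not contain this ingredient. Without these two items --- the second frame system for near-parallel interactions and the $L^p_tL^2_x$ high-modulation structure --- the trilinear estimate you state cannot be proved along the lines you sketch.
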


For results in space dimension $n=1$, see \cite{MNT10,C11}.

A special case arises in the massless variant of the cubic Dirac
equation, that is \eqref{eq:dirac} with $M=0$. A recent result of
Bournaveas and Candy \cite{BC} provides the equivalent result of
Theorem \ref{thm:main} for the case $M=0$. Their strategy stems from
the observation that the massless case carries similarities to the
Wave Maps equation. The authors tailor their resolution spaces around
the original ones introduced by Tataru \cite{tat} in the context of
Wave Maps. In order to overcome the Besov space obstacle, the authors
of \cite{BC} used an idea from \cite{BH}, i.e.\ adding a high
modulation nonlinear structure of type $L^p_tL^2_x$ for certain
$p<2$. The authors of \cite{BC} also obtain a local in time result for
$M \ne 0$ by treating the mass term $M\psi$ as a
perturbation. However, from the perspective of obtaining a global in
time result, the above strategy is limited to the case $M=0$ since the
resolution spaces for $M \ne 0$ were not known prior to the work in
the present paper.

Our results here and the one in dimension $n=3$ from \cite{BH} may
seem orthogonal to the work of Bournaveas and Candy \cite{BC}. Indeed,
we do not address directly the problem with $M=0$.  However by
passing to the high frequency limit one can ---at least formally---
recoup the results for $M=0$ since we work in the in the scale
invariant space dictated by the wave part. We do not formalize this
here and note that the approach in \cite{BC} is a more elegant and
easier way to deal with this problem with $M=0$.  It is an instructive
exercise is to check that, on fixed bounded time intervals, our
structures become in the high frequency limit the ones used in
\cite{BC} and originating in the work of Tataru \cite{tat}.

We describe some of the key ideas involved in this paper. The
Klein-Gordon waves travel with speed strictly less than $1$, though in
the high frequency limit the speed converges to $1$. Our frames
 capture the speed variation of these waves as well as their
directions, and this is why we work with two parameters: $\omega$
(angle) and $\lambda$ (speed). Having a precise formulation on how the
range of speed parameter $\lambda$ depends on the frequency plays a
crucial role in the argument.
 
The first system of frames we construct to recover an $L^2 L^\infty$
estimate stems from the one used \cite{BH}.  An additional level of
complexity is required due to the fact that once the high frequency
waves enter the Schr\"odinger regime the decay rate fails to provide
us with a classical $L^2_t L^\infty_x$ estimate. To fix this issue we
need a bi-parameter system of frames which depends both on $\omega$
(angle) and $\lambda$ (speed).

The next problem arises from that the above system is well suited for
most angular interactions, but fails near the parallel interactions
(in fact it works at exact parallel interactions).  Moreover, the null
structure cannot fix this failure as usually is the case.  To remedy
this problem we construct another system of frames which is suited
precisely to those angular scales and highlights a key geometrical
property of wave interactions: waves with distinct frequencies
travel with different speeds in the context of the Klein-Gordon
equation.

The paper is organized is as follows: In the following
subsection we introduce notation.  Section \ref{sect:le} is devoted to
endpoint Strichartz and energy estimates. In Section \ref{sect:setupD}
we recall the null-structure of the cubic Dirac equation. In Section
\ref{fspaces} we construct function spaces for the nonlinear
problem. In Section \ref{sect:bil-est} we prove auxiliary bilinear and
trilinear estimates. In Section \ref{sect:dirac} we prove the crucial
nonlinear estimates and provide a proof of Theorem \ref{thm:main}.

We point out that the notation, setup and general
reductions in the present paper are adopted from \cite{BH}. Also, we will repeatedly refer
to \cite{BH} for arguments which are similar in two and three
dimensions. As indicated above, the analysis in this paper is
significantly more involved, so it might be useful for the reader to
take a look at \cite{BH}, too.

\subsection{Notation}\label{subsect:not}
Here, we repeat the notation from \cite[Subsection 1.1]{BH} and adjust
it to the case $n=2$: We define $A\prec B$ by $A\leq B-c$ for some
absolute constant $c>0$. Also, we define $A\ll B$ by $A\leq d B$
for some absolute small constant $0<d<1$. Similarly, we define $A\ls
B$ to be $A\leq e B$ for some absolute constant $e>0$, and $A \approx
B$ iff $A\ls B \ls A$.

Let $\dist(M_1,M_2)$ denote the euclidean distance between the two sets 
$M_1,M_2\subset \R^2$.

We set $\la \xi \ra_k:=(2^{-2k}+|\xi|^2)^{\frac12}$ for $k \in \Z$ and $\xi \in \R^2$,
and write $\la\xi\ra:=\la\xi\ra_0$.

Throughout the paper, let $\rho\in C^\infty_c(-2,2)$ be a fixed
smooth, even, cutoff satisfying $\rho(s)=1$ for $|s|\leq 1$ and $0\leq
\rho\leq 1$. For $k \in \Z$ we define $\chi_k:\R^2 \to \R$,
$\chi_k(y):=\rho(2^{-k}|y|)-\rho(2^{-k+1}|y|)$, such that
$A_k:=\supp(\chi_k)\subset \{y \in \R^2 \colon 2^{k-1}\leq |y|\leq
2^{k+1}\}$. Let $\tilde{\chi}_k=\chi_{k-1}+\chi_k+\chi_{k+1}$ and
$\tilde{A}_k:=\supp(\tilde{\chi}_k)$.

We denote by $P_k = \chi_k(D)$ and $\tilde{P}_k =
\tilde{\chi}_k(D)$. Note that $P_k\tilde{P}_k=\tilde{P}_kP_k=P_k$.
Further, we define $\chi_{\leq k}=\sum_{l=-\infty}^k \chi_l$, $\chi_{>
  k}=1-\chi_{\leq k}$ as well as the corresponding operators $P_{\leq
  k}=\chi_{\leq k}(D)$ and $P_{> k}=\chi_{> k}(D)$.

We denote by $\mathcal{K}_l$ a collection of spherical arcs (caps) of
diameter $2^{-l}$ which provide a symmetric and finitely overlapping
cover of the unit circle $\mathbb{S}^1$. Let $\om(\ka)$ to be the
``center'' of $\ka$ and let $\Gamma_{\ka}\subset \R^2$ be the cone
generated by $\ka$ and the origin, in particular $\Gamma_{\ka}\cap
\mathbb{S}^1 =\ka$.

Further, let $\eta_{\ka}$ be smooth partition of unity subordinate to
the covering of $\R^2\setminus\{0\}$ with the cones $\Gamma_{\ka}$,
such that each $\eta_\ka$ is supported in $\frac32 \Gamma_{\ka}$ and
is homogeneous of degree zero and satisfies
\[
|\partial_\xi^\beta\eta_\kappa(\xi)|\leq C_\beta
2^{l|\beta|}|\xi|^{-\beta}, \quad
|(\omega(\ka)\cdot\nabla)^N\eta_\kappa(\xi)|\leq C_N |\xi|^{-N}.
\]
Let $\tilde{\eta}_{\ka}$ with similar properties but slightly bigger
support $2 \Gamma_{\ka}$, such that $\tilde{\eta}_{\ka}\eta_{\ka}=1$.
We define $P_{\kappa}=\eta_{\ka}(D)$,
$\tilde{P}_{\kappa}=\tilde{\eta}_{\ka}(D)$.  With $P_{k,\ka}:=
\eta_{\ka}(D) \chi_k(D)$ and $\tilde{P}_{k,\ka}:=
\tilde{\eta}_{\ka}(D) \tilde{\chi}_k(D)$, we obtain the angular
decomposition
\[
P_k = \sum_{\ka \in \mathcal{K}_l} P_{k,\ka}
\]
and $P_{k,\ka}\tilde{P}_{k,\ka}=\tilde{P}_{k,\ka}P_{k,\ka}=P_{k,\ka}$.
We further define $A_{k,\ka}=\supp(\eta_{\ka} \chi_k)$ and
$\tilde{A}_{k,\ka}=\supp(\tilde{\eta}_{\ka} \tilde{\chi}_k)$.

We define $\widehat{Q^\pm_m u}(\tau,\xi)=\chi_m(\tau\mp \la
\xi\ra)\widehat{u}(\tau,\xi)$, and $\widehat{Q^\pm_{\leq m}
  u}(\tau,\xi)=\chi_{\leq m}(\tau\mp \la
\xi\ra)\widehat{u}(\tau,\xi)$. We also define
$\tilde{Q}^\pm_{m}=Q^\pm_{m-1}+Q^\pm_{m}+Q^\pm_{m+1}$.  We set
$B^\pm_{k,m}$ to be the Fourier support of $Q^\pm_m$, and
$\tilde{B}^\pm_{k,m}$ to be the Fourier support of $\tilde{Q}^\pm_m$.
We define $Q^\pm_{\prec m} =\sum_{l=-\infty}^{m-c}
Q^\pm_l$ for a fixed large integer $c>30$, and $Q^\pm_{\succeq
  m}=I-Q^\pm_{\prec m}$.  Given $k \in \Z$, and $\kappa \in
\mathcal{K}_l$ for some $l \in \N$ we set $B^\pm_{k,\kappa}$ to be the
Fourier-support of $Q^\pm_{\prec k-2l} P_{k,\kappa}$. Similarly we
define $\tilde B^\pm_{k,\kappa}$.

Given a pair $(\lambda,\omega)$ with $\lambda \in \R$ and $\om
=(\omega_1,\omega_2) \in \mathbb{S}^1$, we define
$\omega^\perp=(-\omega_2,\omega_1)$ and the directions
\begin{align*}
  \Theta=\Theta_{\lambda,\omega}=&\frac1{\sqrt{1+\lambda^2}}(\lambda
  ,\omega), \\
  \Theta^\perp=\Theta^{\perp}_{\lambda,\omega} =&
  \frac1{\sqrt{1+\lambda^{2}}}(-1,\lambda\omega), \\
  \Theta_{0,\om^\perp}=&(0,\omega^\perp).
\end{align*}
With respect to this basis, understanding the vectors
$\Theta_{\lambda,\om}$, $\Theta_{\lambda,\om}^\perp$,
$\Theta_{0,\om^\perp}$ as column vectors, we introduce the new
coordinates $t_{\Theta},x_{\Theta}$, with
$x_{\Theta}=(x^1_{\Theta},x^2_{\Theta})$, defined by
\begin{equation} \label{corT}
  \begin{pmatrix}
    t_{\Theta} \\ x^1_{\Th} \\ x^2_{\Th} \\
  \end{pmatrix}
  =\begin{pmatrix} \Theta_{\lambda,\om} & \Theta_{\lambda,\om}^\perp &
    \Theta_{0,\om^\perp}
  \end{pmatrix}^t
  \begin{pmatrix}
    t \\ x_1 \\ x_{2} \\
  \end{pmatrix}
\end{equation}
If $\lambda=1$ we obtain the characteristic directions (null
co-ordinates) as in \cite[p.\ 42]{tat} and \cite[p.\
476]{tao}. However, our analysis requires more flexibility in the
choice of the frames. For fixed $k \in \Z$ we define
$\lambda(k)=(1+2^{-2k})^{-\frac12}$.

\section{Linear estimates}\label{sect:le}
As in \cite[Section 2]{BH}, we recall that the decay rates of
solutions to the linear wave equation and Klein-Gordon equation are determined by the principal curvatures of the
characteristic hypersurfaces. This is well-known and we refer the reader to the list of references provided in \cite[page 47, line 22]{BH}
and the detailed discussion in \cite[Section 2.5]{NaSch}.

In \cite{BH} we started investigating the endpoint Strichartz estimate
for the Dirac and Klein-Gordon equations in dimension $n=3$.  In this paper we continue our investigation in that
direction in dimension $n=2$. This requires a far more
delicate theory since we have to deal with a missing endpoint
Strichartz estimate for the Schr\"odinger part as well.

For convenience, we set $m=1$ in the Klein-Gordon equation \eqref{KG},
which extends by rescaling to \eqref{KG} with any $m \neq 0$. In this
case, the solution is given by
\begin{equation}\label{eq:kg-prop}
  u(t) = \frac12 (e^{it \la D \ra} + e^{-it \la D \ra}) u_0 + \frac1{2i}
  (e^{it \la D \ra} - e^{-it \la D \ra}) \frac{u_1}{\la D \ra}.
\end{equation}
where $\la D \ra$ is the Fourier multiplier with symbol $\la \xi \ra$.
Obviously, we need to study the propagator $e^{\pm it \la D \ra}$. For
the sake of the exposition, we work out all the estimates for $e^{it
  \la D \ra}$, the estimates for $e^{-it \la D \ra}$ are then obtained
by simply reversing time in the estimates for $e^{it \la D \ra}$.

\subsection{Endpoint $L^2L^\infty$ type Strichartz estimate.}\label{endpnt}
Our main result in this subsection provides endpoint Strichartz
estimates for functions localized in frequency. The construction of
the frame systems needed to capture these estimates is time-dependent,
but the constants involved in the estimates are time independent.

We fix $r \in \N$, construct spaces that depend on $r$ and provide
uniform estimates on intervals $[-T,T]$ with $2^{r-1} \leq T \leq 2^r$.  For $k
\leq 99$ and $\omega \in \Sp^1$ we define the set
\[
\Lambda_{k,\omega}=\Big\{ i2^{-r}; i \in \Z, |i|\leq
\frac{2^r}{\sqrt{1+2^{-2k-4}}} \Big\}\times\{\omega\}
\]
and
\[\| \phi \|_{\sum_{\Lambda_{k,\omega}}
  L^2_{t_{\Theta}}L^\infty_{x_{\Theta}}}:= \inf_{\phi = \sum_{{\Theta
      \in \Lambda_{k,\om}}} \phi_\Theta} \sum_{\Theta \in
  \Lambda_{k,\om}} \| \phi_\Theta
\|_{L^2_{t_{\Theta}}L^\infty_{x_{\Theta}}}.
\]
Note that if $k_1 \leq k_2 \leq 99$ then $\Lambda_{k_1,\omega} \subset
\Lambda_{k_2,\omega}$.  One could be more precise about
$\Lambda_{k,\omega}$, but this is not needed for low frequencies.
However it is needed for high frequencies and this motivates the
next definition.

For $k \geq 100$, and $\om \in \Sp^1$ we define
\[
  \Lambda_{k,\om}  =\Big\{\frac1{\sqrt{1+m^{-2}}}; m \in 2^{2k-r-20} \Z\cap[2^{k-3},2^{k+3}]  \Big\} \times\{\omega\}
 \]
if $k < r+20$, while if $k \geq r+20$,
\[
  \Lambda_{k,\om}  =\{ \lambda(k) \} \times\{\omega\}
 \]
 Recall that $\lambda(k)=(1+2^{-2k})^{-\frac12}$. 
We also define
\[
  \Omega_{k,\om}  =\{\lambda(k)\}\times \Big\{ R^{i} \omega; i \in
  \Z, |i| \leq 2^{-k-8+r} \Big\},
\]
where $R$ denotes a rotation by $2^{-r}$. Note that the above set reduces to $\Omega_{k,\om} = \{\lambda(k)\}\times \{ \omega \}$
if $k+8 > r$. These multiscale constructions, corresponding to large families of frames, are  needed in the case $2^k \ls T$;
in the case $T \ls 2^k$, single frames suffice.

For $\ka \in \Ka_{k+10}$, we set
$\Lambda_{k,\ka}:=\Lambda_{k, \om(\ka)}$ and
$\Omega_{k,\ka}:=\Omega_{k,\om(\ka)}$.

Using these sets, we define
\begin{align*}
  \| \phi \|_{\sum_{\Lambda_{k,\kappa}}
    L^2_{t_{\Theta}}L^\infty_{x_{\Theta}}}:=& \inf_{\phi =
    \sum_{\Theta \in \Lambda_{k,\ka}} \phi_\Theta} \sum_{\Theta \in
    \Lambda_{k,\ka}}
  \| \phi_\Theta \|_{L^2_{t_{\Theta}}L^\infty_{x_{\Theta}}}\\
  \| \phi \|_{\sum_{\Omega_{k,\ka}}
    L^2_{x^2_{\Theta}}L^\infty_{(t,x^1)_{\Theta}}}:=& \inf_{\phi =
    \sum_{\Theta\in \Omega_{k,\ka}} \phi_{\Theta}} \sum_{\Theta \in
    \Omega_{k,\ka}} \| \phi_{\Theta}
  \|_{L^2_{x^2_{\Theta}}L^\infty_{(t,x^1)_{\Theta}}}
\end{align*}
We are ready to state the main result containing an effective
replacement structure for the missing endpoint Strichartz estimates.

\begin{thm}\label{thm:estr} Let $r > 0$ and $T \in (0,2^r]$.

  {\rm i)} For all $k \leq 99$, $\omega \in \Sp^1$ and $f \in
  L^2(\R^2)$ with $\supp(\widehat{f})\subset \tilde{A}_{\leq
    k}$,
  \begin{equation} \label{eq:lf} \| 1_{[-T,T]}(t) e^{it \la D \ra} f
    \|_{\sum_{\Lambda_{k,\om}} L^2_{t_{\Theta}}L^\infty_{x_{\Theta}}}
    \ls \| f \|_{L^2},
  \end{equation}
  where the implicit constant does not depend on $r$ and $T$.

  {\rm ii)} For all $k \geq 100$, $\ka \in \mathcal{K}_{k+10}$, and $f
  \in L^2(\R^2)$ with $\supp(\widehat{f}) \subset
  \tilde{A}_{k,\ka}$,
  \begin{equation} \label{Sloc1} \| 1_{[-T,T]}(t) e^{it \la D \ra} f
    \|_{\sum_{\Lambda_{k,\ka}} L^2_{t_{\Theta}}L^\infty_{x_{\Theta}}}
    \ls \| f \|_{L^2},
  \end{equation}
  \begin{equation} \label{Sloc2} \| 1_{[-T,T]}(t) e^{it \la D \ra} f
    \|_{\sum_{\Omega_{k,\ka}}
      L^2_{x^2_{\Theta}}L^\infty_{(t,x^1)_{\Theta}}} \ls 2^{\frac{k}2}
    \| f \|_{L^2},
  \end{equation}
  where the implicit constants do not depend on $r$ and $T$.

  {\rm iii)} For all $k \geq 100$, $1 \leq l \leq k$, $ \ka_1 \in
  \mathcal{K}_l$ and $f \in L^2(\R^2)$ with $\supp(\widehat{f})
  \subset \tilde A_{k, \ka_1}$,
  \begin{equation} \label{MStr} \sum_{\ka \in \mathcal{K}_k} \|
    1_{[-T,T]}(t) e^{it \la D \ra} \tilde P_{\ka} f
    \|_{\sum_{\Lambda_{k,\ka}} L^2_{t_{\Theta}}L^\infty_{x_{\Theta}}}
    \ls 2^{\frac{k-l}2} \|f\|_{L^2}.
  \end{equation}
  where the implicit constant does not depend on $r$ and $T$.
\end{thm}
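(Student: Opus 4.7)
The plan is to derive part iii) as a direct consequence of the single-cap estimate \eqref{Sloc1} from part ii), combined with Cauchy--Schwarz and the almost-orthogonality of the angular decomposition $\{\tilde P_\ka\}_{\ka \in \mathcal{K}_k}$.

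First I would observe the geometric counting: since $f$ has Fourier support in $\tilde A_{k,\ka_1}$ with $\ka_1 \in \mathcal{K}_l$, i.e.\ in a cap of angular diameter $\sim 2^{-l}$ at frequency scale $2^k$, only $O(2^{k-l})$ caps $\ka \in \mathcal{K}_k$ (which have angular diameter $\sim 2^{-k}$) satisfy $\tilde A_{k,\ka} \cap \supp(\widehat{f}) \neq \emptyset$. Let $\mathcal{K}_k^\ast \subset \mathcal{K}_k$ denote this subcollection; for $\ka \notin \mathcal{K}_k^\ast$ we have $\tilde P_\ka f = 0$.

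Next, for each $\ka \in \mathcal{K}_k^\ast$ the function $\tilde P_\ka f$ has Fourier support in $\tilde A_{k,\ka}$, so the single-cap endpoint estimate \eqref{Sloc1} applies and gives
\[
\| 1_{[-T,T]}(t) e^{it\la D\ra} \tilde P_\ka f \|_{\sum_{\Lambda_{k,\ka}} L^2_{t_\Th}L^\infty_{x_\Th}} \ls \| \tilde P_\ka f \|_{L^2},
\]
with a constant independent of $r,T$. Summing over $\ka \in \mathcal{K}_k^\ast$ and applying Cauchy--Schwarz against the counting measure on this set of cardinality $\ls 2^{k-l}$, I obtain
\[
\sum_{\ka \in \mathcal{K}_k} \| 1_{[-T,T]}(t) e^{it\la D\ra} \tilde P_\ka f \|_{\sum_{\Lambda_{k,\ka}} L^2_{t_\Th}L^\infty_{x_\Th}} \ls 2^{\frac{k-l}{2}} \Bigl( \sum_{\ka \in \mathcal{K}_k} \| \tilde P_\ka f \|_{L^2}^2 \Bigr)^{\frac12}.
\]
Finally, the finite-overlap property of the cone multipliers $\tilde\eta_\ka$ yields the almost-orthogonality bound $\sum_\ka \|\tilde P_\ka f\|_{L^2}^2 \ls \|f\|_{L^2}^2$, finishing the proof.

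I do not expect any serious obstacle here: the content of part iii) is essentially a square-function bookkeeping statement built on top of the already-proved single-cap estimate. The only minor point requiring care is checking that the constant in \eqref{Sloc1} is genuinely independent of the choice of $\ka \in \mathcal{K}_k$, which is built into the statement of part ii) since the frame set $\Lambda_{k,\ka}$ is defined uniformly from $\om(\ka)$, and that the almost-orthogonality constant for $\{\tilde P_\ka\}_{\ka \in \mathcal{K}_k}$ does not grow with $k$, which follows from the bounded-overlap property of $\{\tilde\eta_\ka\}$ stated in Subsection \ref{subsect:not}.
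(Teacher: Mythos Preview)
Your proposal is correct and is essentially identical to the paper's own argument: the paper also derives \eqref{MStr} by observing that only $\approx 2^{k-l}$ caps $\ka\in\mathcal{K}_k$ contribute, applying Cauchy--Schwarz to pass from $\ell^1$ to $\ell^2$ over those caps, invoking \eqref{Sloc1} termwise, and then using the almost-orthogonality $\sum_\ka \|\tilde P_\ka f\|_{L^2}^2 \ls \|f\|_{L^2}^2$. The only cosmetic difference is that the paper applies Cauchy--Schwarz before \eqref{Sloc1} rather than after, which is immaterial.
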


The estimate \eqref{eq:lf} is similar in nature to the corresponding
estimate in \cite[Lemma 3.4]{bikt}.  We highlight the similarities and
the differences. By changing the variables and using that $|\lambda|
\ls 1$ one passes from the frames used in \cite[Lemma 3.4]{bikt} to
the ones used in this paper. We do not need to discriminate between
the low frequencies and in this sense the estimate as listed here is
suboptimal; one could easily restate it with a factor of
$2^{\frac{k}2}$ for functions that are localized at frequency $\approx
2^k, k \leq 99.$ The range of admissible $\lambda$ is more carefully
tracked here and this is why our version of $\Lambda$ differs from the
one used in \cite[Lemma 3.4]{bikt}.

The rest of this subsection is devoted to the proof of Theorem
\ref{thm:estr}. In order to prove \eqref{eq:lf} we consider the kernel
\begin{equation}\label{Kl_kdef}
  K_{\leq k }(t,x)=\int_{\R^2}e^{ix\cdot\xi}e^{it\la \xi \ra}\tilde{\chi}_{\leq k}^2(|\xi|)\,d\xi,
\end{equation}
for $k \leq 99$. The key estimates about this kernel are:
\begin{equation} \label{k99-1} |K_{\leq k}(t,x)| \ls \la t \ra^{-1},
\end{equation}
\begin{equation} \label{k99-2} |K_{\leq k}(t,x)| \ls_N \la x \ra^{-N},
  \quad |x| \geq \frac1{\sqrt{1+2^{-2k-4}}} |t|.
\end{equation}
Indeed, \eqref{k99-1} is the standard decay rate for the Schr\"odinger
kernel in dimension $2$, which applies here because we truncate at low
frequencies.  \eqref{k99-2} is obtained by using stationary phase type
arguments, taking into account that the critical points of the phase
function $\phi(\xi)= x\cdot \xi + t \la \xi \ra$ are contained inside
the cone $|x| \leq \frac1{\sqrt{1+2^{-2k-2}}} |t|$.

For any $\omega \in \Sp^1$, we obtain the bound
\[
|1_{[-T,T]} K_{\leq k}(t,x)| \ls_N \sum_{\Theta \in
  \Lambda_{k,\omega}} K_{\Theta}(t,x), \quad K_{\Theta}(t,x) = 2^{-r}
\la t_{\Theta} \ra^{-N}.
\]
This is obvious from \eqref{k99-2} in the region of fast decay, and
for fixed $(t,x)$ in the region of slow decay we count the number of
$\Theta$ such that $|t_{\Theta}|\ls 1$: If $|t|\ls 1$, every
$\Theta\in \Lambda_{k,\omega}$ satisfies this, so the sum is of the
size $1$ which is ok in view of \eqref{k99-1}. In the case $|t|\gg 1$,
the number of such $\Theta$ is $\approx 2^r t^{-1}$, so the sum is of
size $\la t\ra^{-1}$, which is again fine because of \eqref{k99-1}.

From the expression of $ K_{\Theta}$ we derive
\begin{equation} \label{k99-3} \sum_{\Theta \in \Lambda_{k,\om}} \|
  K_{\Theta} \|_{L^1_{t_{\Theta}}L^\infty_{x_{\Theta}}} \ls 1.
\end{equation}
This suffices to prove \eqref{eq:lf}. Indeed, by the $TT^*$
argument and the duality:
\[
(\bigcap_{\Theta \in \Lambda_{k,\om}}
L^2_{t_{\Theta}}L^1_{x_{\Theta}})^* = \sum_{\Theta \in
  \Lambda_{k,\om}} L^2_{t_{\Theta}}L^\infty_{x_{\Theta}}
\]
the problem is reduced to proving $\| 1_{[-T,T]} K_{\leq k}
\|_{\sum_{\Lambda_{\leq k,\om}} L^1_{t_{\Theta}}L^\infty_{x_{\Theta}}}
\ls 1$, which follows from \eqref{k99-3}. A more complete
formalization of this type of argument can be found in \cite{bikt}.

We continue the more delicate part of the argument, that is the
analysis in high frequency with the aim of proving \eqref{Sloc1},
\eqref{Sloc2} and \eqref{MStr}.  For $k \in \Z, k \geq 100$ we define
\begin{equation}\label{K_kdef}
  K_{k}(t,x)=\int_{\R^2}e^{ix\cdot\xi}e^{it\la \xi \ra}\tilde{\chi}_{k}^2(|\xi|)\,d\xi.
\end{equation}
and record the decay estimate
\begin{equation}\label{eq:bigk}
  |K_k(t,x)| \ls
  2^{2k} (1+2^{k} |(t,x)|)^{-\frac12} \min(1,(1+2^k|(t,x)|)^{-\frac12}2^k)).
\end{equation}
This estimate appears in many places in literature, see for instance
\cite{NaSch}.  We provided a self-contained proof in \cite{BH} for
dimension $3$ which can be replicated almost verbatim for dimension
$2$ to give \eqref{eq:bigk}.

We define localized versions of the above kernel. For fixed $l\geq 1$
and $\ka \in \Ka_l$ we define:
\begin{equation}\label{A_kdef}
  K_{k,\ka} (t,x)=\int_{\R^2}e^{ix\cdot\xi}e^{ it\la \xi \ra}\tilde{\chi}_k^2(|\xi|)\tilde{\eta}_\kappa(\xi)\,d\xi.
\end{equation}
$K_{k,\ka}$ is the part of $K_k$ localized in the angular cap
$\ka$. Also, we define
\begin{equation}\label{A_k-loc-def}
  K^j_{k,\ka} (t,x)=\int_{\R^2}e^{ix\cdot\xi}e^{ it\la \xi \ra}\alpha_j(2^{-k}|\xi|)\tilde{\chi_k}\tilde{\eta}_\kappa(\xi)\,d\xi,
\end{equation}
where $(\alpha_j)$ is a smooth partition of unity with
$\supp\alpha_j\subset \{(j-1)2^{-20}\leq |\xi|\leq
(j+1)2^{-20}\}$. Obviously, we have
\begin{equation} \label{kkj}
  K_{k,\ka}(t,x)=\sum_{j=2^{18}-1}^{2^{22}+1}K^j_{k,\ka}.
\end{equation}
The important decay properties of $K_{k,\ka}$ and $K_{k,\ka}^j$ are
recorded in the following Proposition.

 \begin{pro}\label{pro:ang} 
   For all $k \in \Z, k \geq 100$, and $\ka \in \mathcal{K}_{k+10}$,
   \begin{equation}\label{eq:ang1}
     |K_{k,\ka}(t,x)|\ls2^{k}(1+ 2^{-k} |(t,x)|)^{-1}.
   \end{equation}
   In addition, for $N=1,2$, we have the following:
   \begin{equation} \label{eq:ang3} |K_{k,\ka}(t,x)| \ls 2^{k}( 1+
     |x^2_{k,\ka}|)^{-N}, \text{if } |x^2_{k,\ka}| \geq 2^{-k-9}
     |(t,x)|,
   \end{equation}
   where $x^2_{k,\ka}=x^2_{\Theta_{\lambda(k),\om(\ka)}}$. For
   $2^{18}-1\leq j \leq 2^{22}+1$,
   \begin{equation}\label{eq:ang4}
     |K_{k,\ka}^j(t,x)| \ls 2^{k}( 1+ 2^{k}|t_{\lambda_k^j,\ka}|)^{-N}, \text{if } |t_{\lambda_k^j,\ka}| \geq 2^{-2k-8} |t|,
   \end{equation}
   where $\lambda_k^j=1/\sqrt{1+2^{-2k+40}j^{-2}}$ and
   $t_{\lambda_k^j,\ka}=t_{\Theta_{\lambda^j_k,\om(\ka)}}$.
 \end{pro}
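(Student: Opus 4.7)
My plan is to estimate the oscillatory integrals directly in coordinates adapted to the cap $\ka$. After rotating so that $\om(\ka)=e_1$, write $\xi=(a,b)$; on $\tilde A_{k,\ka}$ one has $a\sim 2^k$ and $|b|\ls 2^{-10}$, the support has measure $\ls 2^{k-10}$, and by the symbol bounds in the notation subsection the cutoff $\sigma=\tilde\chi_k^2\tilde\eta_\ka$ satisfies
\[
|\partial_a^\alpha\partial_b^\beta\sigma|\ls 2^{-k\alpha}\cdot 2^{10\beta}
\]
(the radial bound $2^{-k\alpha}$ uses $|(\om(\ka)\cdot\nabla)^N\eta_\ka|\ls|\xi|^{-N}$). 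The phase $\phi=x\cdot\xi+t\la\xi\ra$ has $\partial_a\phi=x_1+ta/\la\xi\ra$, $\partial_b\phi=x_2+tb/\la\xi\ra$ (with $x_2=x^2_{k,\ka}$ in this frame), and Hessian entries $\partial_a^2\phi=t(b^2+1)/\la\xi\ra^3\sim t\cdot 2^{-3k}$, $\partial_b^2\phi=t(a^2+1)/\la\xi\ra^3\sim t\cdot 2^{-k}$, $\partial_a\partial_b\phi=-tab/\la\xi\ra^3\sim t\cdot 2^{-2k-10}$, whence $\det\nabla^2\phi\sim t^2\cdot 2^{-4k}$.

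I would prove \eqref{eq:ang3} by iterated integration by parts in $b$. The hypothesis $|x^2_{k,\ka}|\geq 2^{-k-9}|(t,x)|$ forces $|tb/\la\xi\ra|\ls 2^{-k-10}|t|\leq\tfrac12|x^2_{k,\ka}|$, so $|\partial_b\phi|\gtrsim|x^2_{k,\ka}|$; since $|\partial_b\sigma|\ls 2^{10}$ and $|\partial_b^2\phi|/|\partial_b\phi|^2\ls 2^9/|x^2_{k,\ka}|$ under the same hypothesis, each IBP contributes a factor $\ls 2^{10}/|x^2_{k,\ka}|$. Combining with the trivial bound $|K_{k,\ka}|\ls 2^{k-10}$ and absorbing $2^{10N}$ into the implicit constant yields \eqref{eq:ang3} for $N=1,2$. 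For \eqref{eq:ang4} I carry out the analogous IBP in $a$. The cutoff $\alpha_j(2^{-k}|\xi|)$ pins $a\approx j\cdot 2^{k-20}$ and $a/\la\xi\ra\approx\lambda_k^j$, so $\partial_a\phi=\sqrt{1+(\lambda_k^j)^2}\,t_{\lambda_k^j,\ka}$ modulo an error $\ls|t|\cdot 2^{-2k-20}$, which the hypothesis $|t_{\lambda_k^j,\ka}|\geq 2^{-2k-8}|t|$ renders negligible; thus $|\partial_a\phi|\gtrsim|t_{\lambda_k^j,\ka}|$. With $|\partial_a\sigma_j|\ls 2^{-k+20}$ and trivial bound $|K^j_{k,\ka}|\ls 2^{k-30}$, iterated IBP delivers $|K^j_{k,\ka}|\ls 2^k(1+2^k|t_{\lambda_k^j,\ka}|)^{-N}$.

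The chief obstacle is \eqref{eq:ang1}, where I need $|K_{k,\ka}|\ls 2^{2k}/|(t,x)|$ for $|(t,x)|\gtrsim 2^k$. In the non-stationary region $|x|\gg|t|$, one of $|\om(\ka)\cdot x|$ or $|x^2_{k,\ka}|$ is $\gtrsim|x|$, and a single IBP in the corresponding direction (with $|\partial_a\phi|\gtrsim|\om(\ka)\cdot x|$ in the first case, or the argument of \eqref{eq:ang3} in the second) suffices. For $|x|\ls|t|$, so $|(t,x)|\sim|t|$, the stationary point $\xi^*$ defined by $\xi^*/\la\xi^*\ra=-x/t$ may lie inside $\tilde A_{k,\ka}$; the non-degenerate Hessian above then gives $|K_{k,\ka}|\ls|\det\nabla^2\phi|^{-1/2}\sim 2^{2k}/|t|$ via two-dimensional stationary phase, provided both oscillation scales $2^{3k/2}/\sqrt{|t|}$ and $2^{k/2}/\sqrt{|t|}$ fit inside the support (which requires $|t|\gtrsim 2^{k+20}$). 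In the intermediate range $2^k\ls|t|\ls 2^{k+20}$, where only the radial scale fits, one applies stationary phase in $a$ combined with the trivial tangential width $2^{-10}$, obtaining $|K_{k,\ka}|\ls 2^{3k/2-10}/\sqrt{|t|}\ls 2^{2k}/|t|$. A partition-of-unity argument around $\xi^*$ handles the transition between the stationary and non-stationary regions uniformly, and the regime $|t|\ls 2^k$ is covered by the trivial bound.
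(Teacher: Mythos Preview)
Your arguments for \eqref{eq:ang3} and \eqref{eq:ang4} are essentially identical to the paper's: both integrate by parts along the directions $\omega^\perp$ (your $b$) and $\omega$ (your $a$) respectively, the only cosmetic difference being that the paper first rescales $\xi\mapsto 2^k\xi$ and works with the kernel $B_{k,\ka}$. Your verification that the error $t(a/\la\xi\ra-\lambda_k^j)$ is dominated by $t_{\lambda_k^j,\ka}$ matches the paper's bound $|\xi\cdot\omega/\la\xi\ra_k-\lambda|\leq 2^{-2k-9}$, and although you do not write it out, the $N=2$ case requires the extra observation $|\partial_a^2\phi|/|\partial_a\phi|^2\ls 2^{-k}/|t_\Theta|$, which follows from the hypothesis in the form $|t|\leq 2^{2k+8}|t_\Theta|$; the paper carries this out explicitly.

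For \eqref{eq:ang1} the paper takes a much shorter route than you do. Since $\tilde\eta_\ka$ multiplied by a slightly fattened radial cutoff is a $C_c^\infty$ bump adapted to a rectangle of dimensions $\approx 2^k\times 2^{-10}$, its inverse Fourier transform has $L^1$ norm $\ls 1$ and is essentially supported in a rectangle of dimensions $\approx 2^{-k}\times 2^{10}$. One then writes $K_{k,\ka}(t,\cdot)=K_k(t,\cdot)\ast\phi$ and transfers the already-established global decay \eqref{eq:bigk} (which yields $|K_k(t,x)|\ls 2^{2k}/|(t,x)|$ for $|(t,x)|\gtrsim 2^k$) to $K_{k,\ka}$ by Young's inequality. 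This replaces your entire stationary-phase analysis for \eqref{eq:ang1} with a two-line argument. Your direct approach is correct in outline, but the case splitting you need (stationary point inside versus outside $\tilde A_{k,\ka}$, the intermediate regime $2^k\ls|t|\ls 2^{k+20}$, the partition-of-unity transition near $\xi^*$) is considerably more work and is left sketchy as written; the convolution argument avoids all of it.
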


 We remark that \eqref{eq:ang3}-\eqref{eq:ang4} hold with any $N \in
 \N$, but as stated it suffices for our purposes.  Ideally one would
 like to have the estimate \eqref{eq:ang4} for $K_{k,\ka}$ is a
 similar form to \eqref{eq:ang3} and skip the cumbersome $K_{k,\ka}^j$
 kernels. While available, such a formulation is not able to provide a
 strong exponent as above, see the factor $2^{-2k-8}$ in
 \eqref{eq:ang4}, and this would impact a key property of the set
 $\Lambda_{k,\ka}$.

 We now show how \eqref{Sloc1} follows from the above result. Fix $j
 \in [2^{18}-1, 2^{22}+1] \cap \Z$ and define
 \[
 \Lambda_{k,\kappa}^j =\Big\{\frac1{\sqrt{1+m^{-2}}}; m \in 2^{2k-r-20}
 \Z\cap[ (j-1)2^{k-20},(j+1)2^{k-20}] \Big\}
 \times\{\omega(\kappa)\}
 \]
 We first make a few observations. The cardinality of each $ \Lambda_{k,\kappa}^j$ is $2^{r-k+20} \approx 2^{-k}T$.
 This complicated construction is needed only for a certain range of frequencies: $2^k \ls T \approx 2^r$. 
  If $k \geq r+20$, then we simply use a single set 
 \[
 \Lambda_{k,\kappa} =\Big\{\frac1{\sqrt{1+2^{-2k}}} \Big\}
 \times\{\omega(\kappa)\},
 \]
 and the arguments below simplify considerably: the claim \eqref{inde} follows from Proposition \ref{pro:ang}
 and the rest of the argument is identical. 
 
 Thus we focus below on the case $k \leq r+20$.  For each $\Theta \in \Lambda_{k,\ka}^j$ we define
 \[
 K_{\Theta}(t,x)= 2^{2k} T^{-1} (1+2^{k} |t_{\Theta}|)^{-2}
 \]
 and claim that
 \begin{equation} \label{inde} |K^j_{k,\ka}(t,x)| \ls \sum_{\Theta \in
     \Lambda^j_{k,\ka}} K_{\Theta} (t,x).
 \end{equation}
 Since
 \[
   \sum_{\Theta \in \Lambda^j_{k,\ka}} \| K_\Theta
   \|_{L^1_{t_{\Theta}}L^\infty_{x_{\Theta}}}  \ls
   |\Lambda^j_{k,\ka}| \sup_{\Theta \in \Lambda^j_{k,\ka}} \| K_\Theta \|_{L^1_{t_{\Theta}}L^\infty_{x_{\Theta}}} 
    \ls 2^{-k}T \cdot 2^{2k} T^{-1} 2^{-k}\ls 1.
 \] 
 we conclude with
 \[
 \| K^j_{k,\ka} \|_{\sum_{\Lambda^j_{k,\ka}}
   L^1_{t_{\Theta}}L^\infty_{x_{\Theta}} } \ls 1.
 \]
 By noting that $\Lambda_{k,\ka}=\cup_j \Lambda^j_{k,\ka}$, using
 \eqref{kkj} and the fact that $j$ runs in a finite set, we obtain
 \[
 \| K_{k,\ka} \|_{\sum_{\Lambda_{k,\ka}}
   L^1_{t_{\Theta}}L^\infty_{x_{\Theta}} } \ls 1.
 \]
 which implies \eqref{Sloc1} by a $TT^*$ argument similar to the one
 we used in the proof of \eqref{eq:lf}.

 We continue with the argument for \eqref{inde}. We start with a few
 observations, which in fact were the basis for the construction of
 the set $\Lambda^j_{k,\ka}$:

 \bf P1: \rm If $|t_{\lambda_k^j,\ka}| \leq 2^{-2k-2}|(t,x)|$ then
 there exists $\Theta \in \Lambda^j_{k,\ka}$ such that $|t_\Theta|
 \leq 2^{-k+2}$.

 \bf P2: \rm If $|t_{\lambda_k^j,\ka}| \geq 2^{-2k-2}|(t,x)|$ then
 $|t_{\lambda_k^j,\ka}| \gs |t_{\Theta}|$, for all $\Theta \in
 \Lambda^j_{k,\ka}$.

 As a first case, let $(t,x)$ be such that $|t_{\lambda_k^j,\ka} |
 \leq 2^{-2k-2}|(t,x)|$.  From {\bf P1} it follows that for each such
 $(t,x)$ we estimate the number of $\Theta \in \Lambda^j_{k,\ka}$ such
 that $|t_{\Theta}| \leq 2^{-k+2}$. If $\Theta_0=(\lambda_0,\omega)$
 is such a value, then any other such $\Theta=(\lambda,\omega)$ should
 satisfy $|(\lambda - \lambda_0) t| \leq 2^{-k+3}$. There are two
 subcases to consider next:

 If $|t| \leq 2^{k}$, then since all $\Theta=(\lambda,\om) \in
 \Lambda_{k,\ka}^j$ satisfy $|\lambda - \lambda_0| \leq 2^{-2k+6}$ it
 follows that $|(\lambda - \lambda_0) t| \leq 2^{-k+6}$, hence the
 number of such $\Theta $ is $|\Lambda_{k,\ka}|=2^{-k}T$. Thus the sum
 on the right of \eqref{inde} is estimated by $|\Lambda_{k,\ka}| \cdot
 2^{2k} T^{-1} =2^{k}$ and this is the bound we have for the kernel
 $K_{k,\ka}$.

 If $|t| \geq 2^{k}$, we use that the discretization in
 $\Lambda_{k,\ka}^j$ is at scale $2^{-k} T^{-1}$, it follows that the
 number of such $\lambda$ is given by $\approx
 \frac{2^{-k}t^{-1}}{2^{-k}T^{-1}}= t^{-1} T$. The sum on the right of
 \eqref{inde} is then $\gs 2^{2k} T^{-1} t^{-1} T=2^{2k} t^{-1}$ which
 is precisely the bound we have for the kernel $K_{k,\ka}$.

 Next we consider the second case where $|t_{k,\ka} | \geq
 2^{-2k-2}|(t,x)|$. We use \bf P2 \rm: $|t_{k,\ka}| \gs |t_{\Theta}|$,
 for all $\Theta \in \Lambda^j_{k,\ka}$.  Thus $(1+2^{k}
 |t_{\Theta}|)^{-2} \gs (1+2^{k} |t_{k,\ka}|)^{-2}$ and the right hand
 side of \eqref{inde} is $\gs |\Lambda_{k,\ka}^j| \cdot 2^{2k} T^{-1}
 \cdot (1+2^{k} |t_{k,\ka}|)^{-2}=2^{k} (1+2^{k} |t_{k,\ka}|)^{-2}$
 and this is the bound we have on $K^j_{k,\ka}$ from
 \eqref{eq:ang4}. This finishes the proof of \eqref{Sloc1}.

 A similar argument using \eqref{eq:ang3} proves \eqref{Sloc2}. Note
 that the construction of the set $\Omega_{k,\ka}$ was designed
 precisely to fit the corresponding \bf P1 \rm and \bf P2 \rm in this
 context: the angles considered in $\Omega_{k,\ka}$ cover a
 neighborhood of $\om(\ka)$ size $2^{-k-8}$ which is double the size
 of the slow decay neighborhood described by \eqref{eq:ang3}.
 
 Next we show how \eqref{MStr} follows from \eqref{Sloc1}. Since there
 are $\approx 2^{k-l}$ caps $\ka \in \mathcal{K}_{k}$ such that $P_\ka
 f \ne 0$, we obtain from \eqref{Sloc1}
 \begin{align*}
   & \sum_{\ka \in \mathcal{K}_k} \| 1_{[-T,T]}(t) e^{it \la D \ra}
   \tilde P_{\ka} f \|_{\sum_{\Lambda_{k,\ka}}
     L^2_{t_{\Theta}}L^\infty_{x_{\Theta}}} \\
   & \ls{} 2^{\frac{k-l}2}
   \left( \sum_{\ka \in \mathcal{K}_{k}} \|e^{it \la D \ra}  \tilde P_{\kappa}f \|_{\sum_{\Lambda_{k,\ka}} L^2_{t_{\Theta}}L^\infty_{x_{\Theta}}} ^2 \right)^\frac12 \\
   & \ls{} 2^{\frac{k-l}2} \left( \sum_{\ka \in \mathcal{K}_{k}}
     \|\tilde P_{\kappa}f\|_{L^2_x}^2 \right)^\frac12 \ls{}
   2^{\frac{k-l}2} \| f \|_{L^2_x}.
 \end{align*}

 We end this section with the proof of \eqref{MStr}.
 
 \begin{proof}[Proof of Proposition \ref{pro:ang}] The following proof
   is very similar to \cite{BH}. We begin with the proof of
   \eqref{eq:ang1}. If $|(t,x)| \ls 2^k$ the claim follows from the
   fact that the domain of integration has measure $\approx
   2^{2k-l}\approx 2^k$, otherwise the estimate follows from
   \eqref{eq:bigk} and Young's inequality.

   Next, we turn to the proof of \eqref{eq:ang4}. For compactness of
   notation, we write $\lambda=\lambda_k^j$ and
   $\Th=\Th_{\lambda_k^j,\omega(\ka)}$. By rescaling it suffices to
   consider
   \[
   B_{k,\ka}^j(s,y):=\int_{\R^2}e^{iy\cdot \xi + is\la \xi
     \ra_k}\zeta_j(\xi)d\xi,
   \]
   for
   $\zeta_j(\xi)=\alpha_j(|\xi|)\tilde{\chi}_1^2(|\xi|)\tilde{\eta}_\ka(\xi)$,
   and to prove
   \begin{equation} \label{Best} |B_{k,\ka}^j(s,y)| \ls_N
     2^{-k}(1+|s_{\Th}|)^{-N} \text{ if } |s_{\Th}| \geq 2^{-2k-8} |s|
   \end{equation} 
   for $N=1,2$.  If $|s_{\Th}| \ls 1$, the estimate follows from the
   fact that the support of $\zeta_j$ has measure $\approx
   2^{-k}$. Now, we assume $|s_{\Th}| \gg 1$ and write $\phi(s,y,\xi)=
   y \cdot \xi + s \la \xi \ra_k$ Define $\partial_\omega =\omega\cdot
   \nabla_\xi$, $ d_{\phi,\omega} :=\frac{1}{i\partial_\omega
     \phi}\partial_\omega $ and $ d_{\phi,\omega}^\ast
   :=-\partial_\omega \Big(\frac{\cdot}{i \partial_\omega
     \phi}\Big)$. Integration by parts implies
   \begin{equation}\label{eq:ip}
     \int_{\R^2}e^{i\phi(s,y,\xi)}\zeta_j(\xi)d\xi
     =  \int_{\R^2} e^{i\phi(s,y,\xi)} (d^*_{\phi,\omega})^N \zeta_j(\xi)) d\xi.
   \end{equation}
   We will prove
   \begin{equation}\label{eq:diffop}
     |(d^*_{\phi,\om})^N (\zeta_j)(\xi)|\ls_N |s_{\Th}|^{-N}, \qquad N=1,2,
   \end{equation}
   so that \eqref{Best} follows from \eqref{eq:ip} and
   \eqref{eq:diffop}. Indeed, we observe that
   \[
   \partial_\om \phi(s,y,\xi)=s_{\lambda,\om}+s\Big(\frac{\xi\cdot
     \om}{\la \xi \ra_k}-\lambda\Big),
   \]
   and in the domain of integration we have
   \begin{align*}
     &\Big| \frac{\xi\cdot \om}{\la \xi \ra_k}-\lambda\Big|\leq \Big| \frac{1}{\sqrt{1+2^{-2k}|\xi|^{-2}}}-\lambda\Big|+\Big|\cos(\angle(\hat{\xi},\om))-1\Big|\\
     &\leq 2^{-2k-10}+2^{-2k-10}\leq 2^{-2k-9},
   \end{align*}
   where we use that $(j-1)2^{-20}\leq |\xi|\leq (j+1)2^{-20}$ and
   $|\angle(\xi,\om))|\leq 2^{-k-10}$.  This implies
   \[
   |\partial_\om \phi(s,y,\xi)|\geq |s_{\Th}|-|s|2^{-2k-9}\geq 2^{-1}
   |s_{\Th}|.
   \]
   In particular it follows that
   \begin{equation} \label{part1} |\frac{\partial_\omega
       \zeta}{\partial_\omega \phi}| \ls |s_{\Th}|^{-1}.
   \end{equation}
   where we used that $|\partial_\omega \zeta| \ls 1$. In addition, we
   have
   \[
   \partial_\omega^2 \phi(\xi) = \partial_\omega \Big(s \frac{\omega
     \cdot \xi}{\la \xi \ra_k}\Big) = s \left( \frac{\omega \cdot
       \omega}{\la \xi \ra_k} - \frac{(\omega \cdot \xi)^2}{\la \xi
       \ra_k^3}\right) = \frac{s}{\la \xi \ra_k} \left(1- (
     \frac{\omega \cdot \xi}{\la \xi \ra_k})^2 \right)
   \]
   from which, using the above arguments, we conclude that in the
   domain of integration we have $| \partial_\omega^2 \phi | \ls
   2^{-2k} |s|$.  This allows us to estimate
   \[
   |\partial_\omega \Big(\frac1{\partial_\omega\phi}\Big)| \ls
   \frac{2^{-2k}|s|}{|\partial_\omega \phi|^2} \ls
   \frac{2^{-2k}|s|}{|s_\Th|^2} \ls |s_\Th|^{-1}.
   \]
   From this and \eqref{part1} we obtain \eqref{Best} for $N=1$.  Now
   let $N=2$ and compute
   \begin{align*}
     (d^*_{\phi,\om^\perp})^2 \zeta
     =\partial_\omega\Big(\frac{1}{\partial_\omega \phi
     }\partial_\omega\frac{\zeta}{\partial_\omega \phi}\Big)=
     \frac{\partial_\omega^2 \zeta}{(\partial_\omega \phi)^2}-3
     \frac{\partial_\omega \zeta \partial_\omega^2
       \phi}{(\partial_\omega \phi)^3}
     -\frac{\zeta \partial_\omega^3\phi}{(\partial_\omega \phi)^3}
     +3\frac{\zeta (\partial_\omega^2\phi)^2}{(\partial_\omega
       \phi)^4}
   \end{align*}
   We compute
   \[
   \partial_\omega^3 \phi =\frac{3s}{\la \xi\ra_k^5}\Big((\omega \cdot
   \xi)^3-(\omega \cdot \xi) \la
   \xi\ra_k^2\Big)=\mathcal{O}(2^{-2k})|s|.
   \]
   Recalling that $|\partial_\omega\phi | \geq \frac12 |s_\Th| \gg
   2^{-2k}$, $|\partial_\omega^2 \phi|\ls 2^{-2k}$ and
   $|\partial_\omega^N \zeta|\ls_N 1$ we conclude that
   \[
   |(d^*_{\phi,\om^\perp})^N | \ls |s_\Th|^{-2}+ 2^{-2k}|s_{\Th}|^{-3}
   + 2^{-4k} |s_{\Th}|^{-4} \ls |s_{\Th}|^{-2}.
   \]
   This finishes the proof of \eqref{eq:diffop} and, in turn, the
   proof of \eqref{eq:ang4}.

   It remains to prove \eqref{eq:ang3}. We reset the definition of
   $\Th$ to $\Th=\Theta_{\lambda(k),\om(\ka)}$.  As above, by
   rescaling it suffices to prove
   \begin{equation} \label{Best3} |B_{k,\ka}(s,y)| \ls_N
     2^{-k}(1+2^{-k}|y^2_\Th|)^{-N} \text{ if } |y^2_{\Th}| \geq
     2^{-k-8} |(s,y)|
   \end{equation} 
   for $N=1,2$, where we recall that $y^2_\Th=y\cdot \omega^\perp$.
   If $|y^2_\Th| \ls 2^k$, the estimate follows from the fact
   that the size of the support of integration is $\ls 2^{-k}$.
  
   We now consider the case $|y^2_\Th| \gg 2^k$. By replacing $\om$
   with $\om^\perp$ in the above argument (see \eqref{eq:ip}), we
   obtain
   \begin{equation}\label{eq:ip3}
     \int_{\R^2}e^{i \phi(s,y,\xi)}\zeta(\xi)d\xi
     =  \int_{\R^2} e^{i\beta
       \phi(s,y,\xi)} (d^*_{\phi,\omega^\perp})^N \zeta(\xi)) d\xi.
   \end{equation}
   As above, we claim
   \begin{equation}\label{eq:diffop3}
     |(d^*_{\phi,\om^\perp})^N (\zeta)(\xi)|\ls_N \Big(2^{-k} |y^2_\Th|\Big)^{-N}, \qquad N=1,2.
   \end{equation}
   Since the support of $\zeta$ has measure $\approx 2^{-k}$,
   \eqref{Best3} follows from \eqref{eq:ip3} and \eqref{eq:diffop3}.

   We conclude the proof with the argument for \eqref{eq:diffop3}. If
   $\xi$ in the support of $\zeta$ then
   \[
   \frac{\xi}{|\xi|} = (1-c_1) \omega + c_2 \omega^\perp, \qquad |c_1|
   \leq 2^{-2k-18}, |c_2| \leq 2^{-k-10}
   \]
   and
   \[
   |\frac{|\xi|}{\la \xi \ra_k}- \lambda| \leq 2^{-2k+4},
   \]
   We compute
   \begin{equation*}
     \begin{split}
       \partial_{\omega^\perp} \phi & = \omega^\perp \cdot (y + s
       \frac{\xi}{\la \xi \ra_k}) = y^2_\Th + c_2 s \lambda + c_2 s
       (\frac{|\xi|}{\la \xi \ra_k}-\lambda).
     \end{split}
   \end{equation*}
   We have $|y^2_\Th| \geq 2^{-k-9} |s| \geq 2 |c_2 s \lambda |$, as
   well as $|y^2_\Th| \geq 2^{-k-9}|(y,s)| \gg |c_2 s
   (\frac{|\xi|}{\la \xi \ra_k}-\lambda)|$. From these we conclude
   \begin{equation}\label{eq:der-phi}
     |\partial_{\omega^\perp}\phi | \gs |y^2_\Th| \gg 2^{k}
   \end{equation}
   and, using $|\partial_{\omega^\perp} \zeta| \ls 2^k$,
   \begin{equation} \label{der1} |\frac{\partial_{\omega^\perp}
       \zeta}{\partial_{\omega^\perp} \phi}| \ls 2^k |y^2_\Th|^{-1}.
   \end{equation}
   In addition, we have
   \[
   \partial_{\omega^\perp}^2 \phi(\xi) = \partial_{\omega^\perp}
   \Big(r \frac{\omega^\perp \cdot \xi}{\la \xi \ra_k}\Big) = s \left(
     \frac{\omega^\perp \cdot \omega^\perp}{\la \xi \ra_k} -
     \frac{(\omega^\perp \cdot \xi)^2}{\la \xi \ra_k^3}\right) =
   s(1+\mathcal{O}(2^{-k}))
   \]
   within the support of $\zeta$ and we conclude
   \[
   |\partial_{\omega^\perp}
   \Big(\frac1{\partial_{\omega^\perp}\phi}\Big)| \ls
   \frac{|s|}{|\partial_{\omega^\perp} \phi|^2} \ls
   \frac{|s|}{|y^2_\Th|^2} \ls 2^k |y^2_\Th|^{-1}.
   \]
   From this and \eqref{der1} we obtain \eqref{eq:diffop3} for $N=1$.
   Now we consider the case $N=2$ and compute
   \begin{align*}
     (d^*_{\phi,\omega^\perp})^2 \zeta =
     \frac{\partial_{\omega^\perp}^2 \zeta}{(\partial_{\omega^\perp}
       \phi)^2}-3 \frac{\partial_{\omega^\perp}
       \zeta \partial_{\omega^\perp}^2 \phi}{(\partial_{\omega^\perp}
       \phi)^3}
     -\frac{\zeta \partial_{\omega^\perp}^3\phi}{(\partial_{\omega^\perp}
       \phi)^3} +3\frac{\zeta
       (\partial_{\omega^\perp}^2\phi)^2}{(\partial_{\omega^\perp}
       \phi)^4}.
   \end{align*}
   Further,
   \[
   \partial_{\omega^\perp}^3 \phi =\frac{3s}{\la
     \xi\ra_k^5}\Big(({\omega^\perp} \cdot \xi)^3-({\omega^\perp}
   \cdot \xi) \la \xi\ra_k^2\Big)=s \mathcal{O}(2^{-k}).
   \]
   From \eqref{eq:der-phi} and $|\partial_{\omega^\perp}^2 \phi|\ls
   |s|$ and $|\partial_{\omega^\perp}^N \zeta|\ls_N 2^{kN}$ it follows
   that
   \[
   |(d^*_{\phi,\om^\perp})^2 | \ls 2^{2k} |y^2_{\Th}|^{-2}+ 2^{k}
   |y^2_{\Th}|^{-3} + 2^{-k} |y^2_{\Th}|^{-3}+ |y^2_{\Th}|^{-4} \ls
   2^{2k} |y^2_{\Th}|^{-2},
   \]
   which completes the proof of \eqref{eq:diffop3} for $N=2$.
 \end{proof}

\subsection{Energy estimates in the $(\lambda,\omega)$
   frames}\label{Energy}
 Next, we prove energy estimates similar to \cite[Subsection 2.2]{BH},
 but there will be important differences which we will point out
 below.  At the end of the notation section we have introduced frames
 adapted to a pair $(\lambda,\omega)$ with $\lambda \in \R$ and $\om
 \in \mathbb{S}^1$ and the new coordinates $t_{\Theta},x_{\Theta}$.
%
 We denote by $(\tau_{\Th}, \xi_{\Th})$ the corresponding Fourier
 variables which are given by
 \[
 \begin{pmatrix}
   \tau_{\Th} \\ \xi^1_{\Th} \\ \xi^2_{\Th} \\
 \end{pmatrix}
 =\begin{pmatrix} \Theta_{\lambda,\om} & \Theta_{\lambda,\om}^\perp &
   \Theta_{0,\om^\perp}
 \end{pmatrix}
 \begin{pmatrix}
   \tau \\ \xi_1 \\ \xi_{2} \\
 \end{pmatrix}
 \]
We also
 introduce here a fourth vector $\Theta^-=\Theta_{\lambda,-\om}$ for
 reasons which will become apparent in the proof of the Theorem below.
 In the following theorem we set $B_{k,\ka}=B^+_{k,\ka}$ and
 $\tilde{B}_{k,\ka}=\tilde{B}^+_{k,\ka}$.
 
 \begin{thm} \label{thm:Energy} a) Let $99 \leq m=\min(j,k)$, $0 \leq
   l \leq m+10$ and $\ka \in \mathcal{K}_l$.  Let $\Theta
   =\Theta_{\lambda,\om} \in \Lambda_{j,\om}$.  Assume
   $\alpha=\dist(\om, \ka)$ satisfies $2^{-3-l} \leq \alpha \leq
   2^{3-l}$ for $l \leq m+9$ and $\alpha \leq 2^{3-l}$ for $l=m+10$;
   if $j = 99$ then we consider only the last case. Define $\tilde
   \alpha=\max(\alpha,2^{-m})$.

   {\rm i)} If $f \in L^2(\R^2)$ has the property that $\hat f$ is
   supported in $A_{k,\ka}$, the following holds true
   \begin{equation} \label{DH} \tilde \alpha \| e^{ it \la D \ra}
     f\|_{L^\infty_{t_{\Theta}}L^2_{x_{\Theta}}} \ls \| f \|_{L^2},
   \end{equation}
   provided that $l \leq m-10$ or $ l=m+10 \wedge |j-k| \geq 10$, and
   \begin{equation} \label{DH2} \alpha^\frac12 \| e^{ it \la D \ra}
     f\|_{L^\infty_{x^2_{\Theta}}L^2_{(t,x^1)_{\Theta}}} \ls \| f
     \|_{L^2}, \quad l \leq m+9.
   \end{equation}

   {\rm ii)} Consider the inhomogeneous equation
   \begin{equation} \label{inheq} (i \partial_t + \la D \ra) u = g,
     \quad u(0)=0,
   \end{equation}
   where $\hat g$ is assumed to be supported in the set
   $B_{k,\ka}$. If $g \in L^1_{t_{\Theta}}L^2_{x_{\Theta}}$, then the
   solution $u$ satisfies the estimate
   \begin{equation} \label{DIH} \tilde \alpha \| u
     \|_{L^\infty_{t_{\Theta}}L^2_{x_{\Theta}}} \ls \tilde \alpha^{-1}
     \| g \|_{L^1_{t_{\Theta}}L^2_{x_{\Theta}}},
   \end{equation}
   provided that $l \leq m-10$ or $ l=m+10 \wedge |j-k| \geq 10$.
   
   If $g \in L^1_{x^2_{\Theta}}L^2_{(t,x^1)_{\Theta}} $, then the
   solution $u$ satisfies the estimate
   \begin{equation} \label{DIH2} \alpha^\frac12 \| u
     \|_{L^\infty_{x^2_{\Theta}}L^2_{(t,x^1)_{\Theta}}} \ls
     \alpha^{-\frac12} \| g
     \|_{L^1_{x^2_{\Theta}}L^2_{(t,x^1)_{\Theta}}}, \quad l \leq m+9.
   \end{equation}

   {\rm iii)} Under the hypothesis of Part ii) when $g \in
   L^1_{t_{\Theta}}L^2_{x_{\Theta}}$ the solution $u$ can be written
   as
   \begin{equation}
     u(t) =e^{ it \la D \ra} \tilde v_0
     + \int_{-\infty}^\infty u_{s}(t) \chi_{t_{\Theta} > s} ds
   \end{equation}
   where $u_s(t)= e^{ it \la D \ra} v_s$ (homogeneous solution in the
   original coordinates) and
   \begin{equation}
     \| \tilde v_0 \|_{L^2_x} + \int_{-\infty}^\infty \| v_s \|_{L^2_x} ds
     \ls \al^{-1} \| g \|_{L^1_{t_{\Theta}}L^2_{x_{\Theta}}}. 
   \end{equation}
   In addition $\hat v_s$ and $\hat{\tilde v}_0$ are supported in
   $\tilde A_{k,\ka}$.
     
   A similar statement holds true when $g \in
   L^1_{x^2_{\Theta}}L^2_{(t,x^1)_{\Theta}} $.
 \end{thm}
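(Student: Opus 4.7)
The proof reduces to a geometric statement about the angle between the characteristic surface $\{\tau=\la\xi\ra\}$ and the coordinate planes associated with $\Theta$. Since $(t,x)\mapsto(t_\Theta,x_\Theta)$ is orthonormal, so is its dual $(\tau,\xi)\mapsto(\tau_\Theta,\xi_\Theta)$, and Plancherel applies freely. The central objects are the two Jacobians arising from parametrizing the surface by $\xi$, computed in the basis $(\om,\om^\perp)$:
\[
J_1:=\Big|\det\tfrac{\partial\xi_\Theta}{\partial\xi}\Big|=\frac{|\lam-\xi\cdot\om/\la\xi\ra|}{\sqrt{1+\lam^2}},\qquad J_2:=\Big|\det\tfrac{\partial(\tau_\Theta,\xi^1_\Theta)}{\partial\xi}\Big|=\frac{|\xi\cdot\om^\perp|}{\la\xi\ra}.
\]
The plan is to establish $J_1\gs\tilde\al^2$ on $\tilde A_{k,\ka}$ under the hypotheses of \eqref{DH}, and $J_2\gs\al$ under those of \eqref{DH2}, and then deduce everything else by Plancherel and a $t_\Theta$-foliation.

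For part (i), changing variables $\xi\mapsto\xi_\Theta$ in $e^{it\la D\ra}f(x)=\int e^{i(t\la\xi\ra+x\cdot\xi)}\hat f(\xi)\,d\xi$ and applying Plancherel in $x_\Theta$ yields
\[
\|e^{it\la D\ra}f\|_{L^2_{x_\Theta}}^2=\int|\hat f(\xi)|^2 J_1(\xi)^{-1}\,d\xi,
\]
which is independent of $t_\Theta$, so the bound $J_1\gs\tilde\al^2$ gives \eqref{DH} at once. The estimate \eqref{DH2} follows analogously by parametrizing the surface via $(\tau_\Theta,\xi^1_\Theta)$ and applying Plancherel in $(t,x^1)_\Theta$ at fixed $x^2_\Theta$, using $J_2\gs\al$.

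For part (iii), I foliate the source in $t_\Theta$: setting $h_s(x_\Theta):=g(s,x_\Theta)$ (the slice of $g$ at $t_\Theta=s$), one has $g=\int h_s(x_\Theta)\,\delta(t_\Theta-s)\,ds$, and the identity $\partial_t\chi_{t_\Theta>s}=\tfrac{\lam}{\sqrt{1+\lam^2}}\delta(t_\Theta-s)$ shows that
\[
u=e^{it\la D\ra}\tilde v_0+\int_{-\infty}^\infty u_s\chi_{t_\Theta>s}\,ds,\qquad u_s:=e^{it\la D\ra}v_s,
\]
solves $(i\partial_t+\la D\ra)u=g$ iff the trace of $u_s$ at $\{t_\Theta=s\}$ equals $(-i)\tfrac{\sqrt{1+\lam^2}}{\lam}h_s$. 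Inverting the Fourier-side trace formula $\F_{x_\Theta}[u_s(s,\cdot)](\xi_\Theta)=e^{is\tau_\Theta^{\ast}}\hat v_s(\xi(\xi_\Theta))J_1^{-1}$ gives the explicit choice
\[
\hat v_s(\xi)=-i\tfrac{\sqrt{1+\lam^2}}{\lam}\,J_1(\xi)\,e^{-is\tau_\Theta^{\ast}(\xi)}\,\F_{x_\Theta}[h_s](\xi_\Theta(\xi)),
\]
automatically supported in $\tilde A_{k,\ka}$; its norm satisfies $\|v_s\|^2=\int J_1|\F_{x_\Theta}h_s|^2\,d\xi_\Theta\ls\|h_s\|_{L^2_{x_\Theta}}^2$ since $J_1\ls 1$. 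Thus $\int\|v_s\|\,ds\ls\|g\|_{L^1_{t_\Theta}L^2_{x_\Theta}}$, which is in fact stronger than the claimed $\al^{-1}$ bound, and $\tilde v_0:=-\int u_s(0,\cdot)\chi_{t_\Theta(0,\cdot)>s}\,ds$ enforces $u(0)=0$ with $\|\tilde v_0\|\ls\int\|v_s\|\,ds$. Part (ii), i.e.\ \eqref{DIH}, follows at once by inserting this decomposition into \eqref{DH}: $\|u\|_{L^\infty_{t_\Theta}L^2_{x_\Theta}}\ls\tilde\al^{-1}(\|\tilde v_0\|+\int\|v_s\|\,ds)\ls\tilde\al^{-2}\|g\|_{L^1_{t_\Theta}L^2_{x_\Theta}}$; the analogous scheme with $x^2_\Theta$ in place of $t_\Theta$ and $J_2$ in place of $J_1$ produces \eqref{DIH2}.

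The main obstacle is the geometric lower bound $J_1\gs\tilde\al^2$. Writing $\xi=|\xi|(\cos\theta\,\om+\sin\theta\,\om^\perp)$ with $|\theta|\ls\al$, one expands
\[
\lam-\frac{\xi\cdot\om}{\la\xi\ra}=(\lam-\lam(k))+\frac{\lam(|\xi|)\theta^2}{2}+O(2^{-2k}).
\]
In the regime $l\le m-10$, $\al^2\approx 2^{-2l}\gg 2^{-2m}$, so the quadratic term dominates, yielding $J_1\gs\al^2=\tilde\al^2$. For $l=m+10$ with $|j-k|\ge 10$, the construction of $\Lambda_{j,\om}$ confines $\lam$ to $[\lam(j-3),\lam(j+3)]$, which is disjoint from $\lam(k)$ with gap $\gs 2^{-2m-7}$; hence $|\lam-\lam(k)|\gs 2^{-2m}$ dominates the $\theta^2$ term of size $\ls 2^{-2m-20}$, again producing $J_1\gs 2^{-2m}=\tilde\al^2$. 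The companion bound $J_2\gs\al$ needed for \eqref{DH2} is immediate: the hypothesis $l\le m+9$ forces $\al\ge 2^{-l-3}$, so $|\xi\cdot\om^\perp|\approx 2^k\al$ while $\la\xi\ra\approx 2^k$.
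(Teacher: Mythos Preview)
Your approach is essentially the paper's. For part~(i) the paper parametrizes the surface as a graph $\tau_\Theta=h(\xi_\Theta)$ by solving the quadratic \eqref{quadeq}, computes $\nabla h$ explicitly, and reduces everything to a lower bound on $|\tau_{\Theta^-}|=|\lambda\la\xi\ra-\xi\cdot\omega|$; your Jacobian $J_1=|\lambda-\xi\cdot\omega/\la\xi\ra|/\sqrt{1+\lambda^2}$ is precisely $|\tau_{\Theta^-}|/(\la\xi\ra\sqrt{1+\lambda^2})$, so the two routes coincide up to a harmless factor $\approx 2^k$, and your expansion of $\lambda-\xi\cdot\omega/\la\xi\ra$ reproduces the paper's case analysis verbatim. (Your change-of-variables packaging is in fact a bit cleaner than the paper's detour through the explicit roots $h^\pm$.) Likewise $J_2=|\xi\cdot\omega^\perp|/\la\xi\ra$ matches the paper's use of $|\xi^2_\Theta|\approx 2^k\alpha$ for \eqref{DH2}. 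For parts~(ii)--(iii) the paper defers to \cite{BH}, and your $t_\Theta$-foliation is exactly the standard Duhamel-in-frames construction used there; your observation that $\int\|v_s\|\,ds\ls\|g\|_{L^1_{t_\Theta}L^2_{x_\Theta}}$ without the $\alpha^{-1}$ loss is correct and simply sharper than what the theorem records.

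One genuine gap: your choice $\tilde v_0:=-\int u_s(0,\cdot)\,\chi_{t_\Theta(0,\cdot)>s}\,ds$ does \emph{not} have $\widehat{\tilde v}_0$ supported in $\tilde A_{k,\ka}$ as the theorem asserts, because for fixed $s$ the cutoff $\chi_{t_\Theta(0,x)>s}=\chi_{\omega\cdot x>s\sqrt{1+\lambda^2}}$ is a spatial Heaviside and multiplication by it spreads Fourier support. The norm bounds survive, but the support claim for $\tilde v_0$ (and, for the same reason, the implicit claim that $\int u_s\chi_{t_\Theta>s}\,ds$ has spatial Fourier support in $\tilde A_{k,\ka}$) fails as written. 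The fix is to run the construction on the Fourier side as in \cite{BH}: parametrize the surface as $\tau_\Theta=h(\xi_\Theta)$, solve the resulting first-order ODE in $t_\Theta$ for each fixed $\xi_\Theta$ in the image of $A_{k,\ka}$, and read off $v_s$ and $\tilde v_0$ there; this keeps all $\xi$-supports inside $\tilde A_{k,\ka}$ by construction. (A minor related imprecision: in the $l=m+10$ case you write ``$|\theta|\ls\alpha$'' but then correctly use $|\theta|\ls 2^{-l}$; since $\alpha$ has no lower bound there, the former is not what you mean.)
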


 A few remarks are in place about the statement of the above
 theorem. First, the statement \eqref{DH2} and the corresponding ones
 in part ii) and iii) hold true for all $\alpha$ with $2^{-3-l} \leq
 \alpha \leq 2^{3-l}$, in the sense that we do not need to restrict to
 $l \leq m+9$. The reason we did so in the statement is for the sake
 of conciseness. Nevertheless the statement \eqref{DH} for $l=m+10$
 does not require angular separation, thus covering the ranges skipped
 by the way we state \eqref{DH2}.
   
 What is important to note is that \eqref{DH} fails somewhere in the
 range $m-9 \leq l \leq m+9$ in the sense that the energy estimates in
 the given frames "blow-up" and become useless. This is precisely the
 region where we need to use the estimates \eqref{DH2}.
   
 A careful reading reveals that in the case $|j-k| \leq 9$, and $l =
 m+10$ we did not provide any estimates.  As noted above, one can
 continue estimates of type \eqref{DH2} and \eqref{DIH2} for $l \geq
 m+10$, but these will not be helpful for our purposes.

 \begin{proof} i) \bf Proof of \eqref{DH}. \rm We start with an almost verbatim repetition from \cite[Proof of Theorem 2.4]{BH}: The space-time Fourier
   of $w(t,x)=e^{ i t \la D \ra} f(x)$ is given by the distribution
   $\F w= \hat f d \sigma$ where $d \sigma(\tau,\xi) = \delta_{\tau=
     \sqrt{|\xi|^2+1}}$ is comparable with the standard measure on the
   surface $\tau= \sqrt{|\xi|^2+1}$. We change the variables
   $(\tau,\xi) \rightarrow (\tau_{\Th},\xi_{\Th})$ and rewrite $\hat f
   d \sigma = F \delta_{\tau_{\Th}=h(\xi_{\Th})}$; thus
   \begin{equation} \label{Fest} \| F \|_{L^2_{\xi_{\Th}}} \ls
     (1+\|\nabla h \|_{L^\infty})^\frac12 \| f \|_{L^2}
   \end{equation}
   where the $L^\infty$ norms is taken on the support of $F$.

   We now work out the details. The equation of the characteristic
   surface $\tau= \sqrt{|\xi|^2+1}$ can be rewritten as
   $\tau^2-|\xi|^2-1=0$. In the new frame this takes the form
   \[
   \frac1{\lambda^2+1}(\lambda \tau_{\Th} - \xi^1_{\Th})^2 -
   \frac1{\lambda^2+1} (\tau_{\Th} + \lambda \xi^1_{\Th})^2 -
   |\xi^2_{\Th}|^2 -1 =0.
   \]
   We solve this equation for $\tau_{\Th}$, hence we rewrite it as
   follows
   \begin{equation} \label{quadeq} \frac{\lambda^2-1}{\lambda^2+1}
     (\tau_{\Th})^2 - \frac{4 \lambda}{\lambda^2+1} \tau_{\Th}
     \xi^1_{\Th} + \frac{1-\lambda^2}{\lambda^2+1} (\xi^1_{\Th})^2 -
     |\xi^2_{\Th}|^2 -1 =0.
   \end{equation}
   The solutions of this quadratic equation are given by
   \begin{equation} \label{root}\begin{split} \tau_{\Th}=
       h^\pm(\xi_{\Th}) = \frac{2 \lambda \xi^1_{\Th} \pm
         \sqrt{(\lambda^2+1)^2
           (\xi^1_{\Th})^2+(\lambda^4-1)(|\xi^2_{\Th}|^2+1)}}{\lambda^2-1}.
     \end{split}
   \end{equation}
   We will identify which one of the two solutions is the correct
   one. The positivity of the discriminant
   $\Delta_{\Th}=(\lambda^2+1)^2
   (\xi^1_{\Th})^2+(\lambda^4-1)(|\xi^2_{\Th}|^2+1)$ is implicit, as
   we know a priori that \eqref{quadeq} has at least one solution. We
   will come back shortly to these issues. We continue with the
   following computation:
   \[
   \begin{split}
     \frac{\partial h^\pm}{\partial \xi^1_{\Th}}
     & =  \frac{1}{\lambda^2-1}(2\lambda+\frac{(\lambda^2+1)^2\xi^1_{\Th}}{\pm \sqrt{(\lambda^2+1)^2 (\xi^1_{\Th})^2+(\lambda^4-1)(|\xi^2_{\Th}|^2+1)}})\\
     & = \frac{1}{\lambda^2-1}(2\lambda+ \frac{(\lambda^2+1)^2\xi^1_{\Th}}{(\lambda^2-1) \tau_{\Th} - 2 \lambda \xi^1_{\Th}}) \\
     & = \frac{2\lambda \tau_{\Th} + (\lambda^2-1)\xi^1_{\Th}}{(\lambda^2-1) \tau_{\Th} - 2 \lambda \xi^1_{\Th}}
      = - \frac{\xi^1_{\Th^-}}{\tau_{\Th^-}}
   \end{split}
   \]
   In a similar manner we obtain $ \nabla_{\xi^2_\Th} h^\pm =
   (\lambda^2+1) \frac{\xi^2_{\Th}}{\tau_{\Th^-}}$, from which, using
   \eqref{Fest}, it follows
   \begin{equation} \label{inter} \| e^{ it \la D \ra}
     f\|_{L^\infty_{t_{\Th}}L^2_{x_{\Th}}} \ls \left( 1+ \sup_{\xi \in
         A_{k,\ka}} \frac{2^k}{|\tau_{\Th^-}|} \right)^\frac12 \| f
     \|_{L^2}.
   \end{equation}

   To finish the argument we need a lower bound for
   $|\tau_{\Th^-}|$. We provide below lower bounds for $\Delta_{\Th}$
   and $\tau_{\Th^-}$ for $(\tau,\xi) \in B_{k,\ka}$, as these more
   general bounds are needed in Part ii).

   We need to consider a few cases: $j \leq k-10$, $|j-k| \leq 9$ and
   $j \geq k+10$.  Since the computations are entirely similar, we
   will deal with $j \leq k-10$ in detail.  Here we have to consider
   two more cases: $l \leq j-10$ and $l=j+10$.
   
   \bf Case 1: \rm $l \leq j-10$.  For $(\tau,\xi) \in B_{k,\ka}$ it
   holds that $\tau - \sqrt{|\xi|^2+1} = \epsilon(\tau,\xi)$ with
   $|\epsilon(\tau,\xi)| \leq 2^{k-2l-10}$, hence
   \[
   \begin{split}
     \tau_{\Th^-} = & \lambda \tau - \xi \cdot \omega = \lambda
     \sqrt{|\xi|^2+1} + \lambda \epsilon - \xi \cdot \omega \\
     = & |\xi| \left( (\lambda-1) \sqrt{1+|\xi|^{-2}} +
       \sqrt{1+|\xi|^{-2}} -1+1 - \frac{\xi \cdot \omega}{|\xi|} +
       \frac{\lambda \epsilon}{|\xi|} \right)
   \end{split}
   \]
   We have the following: $|(1-\lambda) \sqrt{1+|\xi|^{-2}}| \leq
   2(1-\lambda) \leq 2^{-2j+6} \leq 2^{-2l-12}$ (since $\lambda \in
   \Lambda_j$), $|\sqrt{1+|\xi|^{-2}}- 1| \leq 2^{-2j-12} \leq
   2^{-2l-20}$, $ 2^{-2l-6} \leq 1- \frac{\xi \cdot \omega}{|\xi|}
   \leq 2^{-2l+6}$ and $|\frac{\lambda \epsilon}{|\xi|} | \leq
   2^{-2l-8}$. From these we conclude that $2^{k-2l-10} \leq
   \tau_{\Th^-} \leq 2^{k-2l+10}$; thus we conclude that $\tau_{\Th^-}
   \approx 2^k \alpha^2$ and $\tau_{\Th^-} \geq 2^{k-20} \alpha^2$.

   In particular, using \eqref{inter} we obtain \eqref{DH}. Since the
   solutions in \eqref{root} can be recast in the form
   $\tau_{\Th^-}=\pm \sqrt{\Delta_{\Th}}$ and we just proved that $
   \tau_{\Th^-} > 0$ in $B_{k,\ka}$, it follows that the solutions
   $h^+$ in \eqref{root} correspond to the choice of the surface $\tau
   = \sqrt{|\xi|^2+1}$.

   We now continue with the more general bounds for $\Delta_{\Th}$ in
   the set $B_{k,\ka}$.  Since $|\tau - \la \xi \ra| \leq
   2^{k-2l-10}$, it follows that $ |\tau^2 - |\xi|^2 - 1| \leq
   2^{2k-2l-8}$ or equivalently, $\tau^2 - |\xi|^2 -
   1=\epsilon(\tau,\xi)$ with $|\epsilon(\tau,\xi)| \leq 2^{2k-2l-8}$.
   We rewrite the equation in characteristic coordinates as above, to
   obtain
   \[
   \tau_{\Th^-}^2= \Delta_{\Th} + (1-\lambda^4)\epsilon
   \]
   We have already shown that $\tau_{\Th^-} \geq 2^{k-2l-10}$ and
   since $|(1-\lambda^4)\epsilon| \leq 2^{2k-2l-8} |1-\lambda| \leq
   2^{2k-2l-8} 2^{-2j+5} \leq 2^{2k-4l-23}$, it follows that
   $\Delta_{\Th} \geq 2^{2k-4l-22} \approx 2^{2k} \alpha^4$ in
   $B_{k,\ka}$. A similar argument proves $\Delta_{\Th} \approx 2^{2k}
   \alpha^4$ in $B_{k,\ka}$.

   \bf Case 2: $l=j+10$ \rm. For $(\tau,\xi) \in B_{k,\ka}$ it holds
   that $\tau - \sqrt{|\xi|^2+1} = \epsilon(\tau,\xi)$ with
   $|\epsilon(\tau,\xi)| \leq 2^{k-2j-20}$, hence
   \[
   \begin{split}
     \tau_{\Th^-} = & \lambda \tau - \xi \cdot \omega = \lambda
     \sqrt{|\xi|^2+1} + \lambda \epsilon - \xi \cdot \omega \\
     = & |\xi| \left( (\lambda-1) \sqrt{1+|\xi|^{-2}} +
       \sqrt{1+|\xi|^{-2}} -1+1 - \frac{\xi \cdot \omega}{|\xi|} +
       \frac{\lambda \epsilon}{|\xi|} \right)
   \end{split}
   \]
   We have the following: $(1-\lambda) \sqrt{1+|\xi|^{-2}} \geq
   1-\lambda \geq 2^{-2j-8}$ (since $\lambda \in \Lambda_j$),
   $|\sqrt{1+|\xi|^{-2}}- 1| \leq 2^{-2j-12}$, $|1- \frac{\xi \cdot
     \omega}{|\xi|}| \leq 2^{-2j-12}$ and $|\frac{\lambda
     \epsilon}{|\xi|} | \leq 2^{-2j-12}$.  From these we conclude that
   $-\tau_{\Th^-} \approx 2^{k-2j}$ and also that $-\tau_{\Th^-} \geq
   2^{k-2j-10}$.

   In particular, using \eqref{inter} we obtain \eqref{DH}. Since the
   solutions in \eqref{root} can be recast in the form
   $\tau_{\Th^-}=\pm \sqrt{\Delta_{\Th}}$ and we just proved that $
   \tau_{\Th^-} < 0$ in $B_{k,\ka}$, it follows that the solutions
   $h^-$ in \eqref{root} correspond to the choice of the surface $\tau
   = \sqrt{|\xi|^2+1}$.

   We now continue with the more general bounds for $\Delta_{\Th}$ in
   the set $B_{k,\ka}$.  Since $|\tau - \la \xi \ra| \leq 2^{k-2j-30}$
   hence $ |\tau^2 - |\xi|^2 - 1| \leq 2^{2k-2j-28}$ or equivalently,
   $\tau^2 - |\xi|^2 - 1=\epsilon(\tau,\xi)$ with
   $|\epsilon(\tau,\xi)| \leq 2^{2k-2j-28}$.  We rewrite the equation
   in characteristic coordinates as above, to obtain
   \[
   \tau_{\Th^-}^2= \Delta_{\Th} + (1-\lambda^4)\epsilon
   \]
   We have already shown that $\tau_{\Th^-} \geq 2^{k-2j-10}$ and
   since $|(1-\lambda^4)\epsilon| \leq 2^{2k-2j-26} |1-\lambda| \leq
   2^{2k-2j-26} 2^{-2j+5}=2^{2k-4j-21}$, it follows that $\Delta_{\Th}
   \geq 2^{2k-2j-21}$ in $B_{k,\ka}$. A similar argument proves
   $\Delta_{\Th} \approx 2^{2k} \tilde \alpha^4$ in $B_{k,\ka}$.
 
   Although we decided to leave out the details of this argument in
   the cases $|j-k| \leq 9$ and $j \geq k+10$, we would like to point
   out a simple fact. If $j=k$, $\xi=2^k \omega$ and $\epsilon=0$, we
   obtain $\tau_{\Th^-}=0$. This highlights the reason why we cannot
   cover the case $l=m+10$ when $|j-k| \leq 9$.
 
   \bf Proof of \eqref{DH2}. \rm We start as in the proof of
   \eqref{DH} but with the goal of writing $\hat f d \sigma = F
   \delta_{\xi^2_{\Th}=h( \tau_{\Th},\xi^1_{\Th})}$. This gives the
   bound
   \begin{equation} \label{Fest2} \| F
     \|_{L^2_{\tau_{\Th},\xi^1_{\Th}}} \ls (1+\|\nabla h
     \|_{L^\infty})^\frac12 \| f \|_{L^2}
   \end{equation}
   where the $L^\infty$ norm of $\nabla h$ is taken on the support of
   $F$.
   
   We use the equation of the characteristic surface in the form
   \eqref{quadeq} and solve this equation for $\xi'_{\lambda,\om}$:
   \begin{equation} \label{root2}\begin{split} \xi^2_{\Th}= \tilde
       h^\pm(\tau_{\Th},\xi^1_{\Th}) = \pm \sqrt{\tilde \Delta_{\Th}}.
     \end{split}
   \end{equation}
   where $\tilde \Delta_{\Th}=\frac1{\lambda^2+1}(\lambda \tau_{\Th} -
   \xi^1_{\Th})^2 - \frac1{\lambda^2+1} (\tau_{\Th} + \lambda
   \xi^1_{\Th})^2-1$. Now,
   \[
   \begin{split}
     \frac{\partial \tilde h^\pm}{\partial \tau_{\Th}} =
     \frac1{\lambda^2+1} \frac{(\lambda^2-1) \tau_{\Th} - 2 \lambda
       \xi^1_{\Th}}{\xi^2_{\Th}} = \frac1{\lambda^2+1}
     \frac{\tau_{\Th^-}}{\xi^2_{\Th}}
   \end{split}
   \]
   In a similar manner we obtain $ \frac{\partial \tilde
     h^\pm}{\partial \xi^1_\Th} = (\lambda^2+1)
   \frac{\xi^1_{\Th^-}}{\xi^2_{\Th}}$, from which, using
   \eqref{Fest2}, it follows
   \begin{equation} \label{inter2} \| e^{ it \la D \ra}
     f\|_{L^\infty_{x^2_\Th}L^2_{(t,x^1)_{\Th}}} \ls \left( 1+
       \sup_{\xi \in A_{k,\ka}} \frac{2^k}{|\xi^2_{\Th}|}
     \right)^\frac12 \| f \|_{L^2}.
   \end{equation}
   To finish the argument we use
$|\xi^2_{\Th}|   = |\xi \cdot \omega^\perp| \approx 2^k \cdot \alpha$.
   As before, a direct computation shows that in the set $B_{k,\ka}$
   we have $|\xi^2_{\Th}| \approx 2^k \cdot \alpha$ and $\tilde
   \Delta_{\Th} \approx (2^k \cdot \alpha)^2$.
 
   ii) and iii) The proofs of these estimates are entirely similar to
   the corresponding ones in \cite{BH}.  The basic idea is that once
   the linear phenomenology is unraveled by \eqref{DH} and
   \eqref{DH2}, obtaining the energy type estimates is done in a
   similar manner: change the coordinates and estimate all quantities
   taking into account the localization in $B_{k,\ka}$.  Note that in
   part i) we upgraded some of our estimates to $B_{k,\ka}$.
 \end{proof}

\section{Reduction and Null structure of the cubic Dirac}\label{sect:setupD}
The cubic Dirac equation \eqref{eq:dirac} has a linear part with
matrix coefficients. Below, we rewrite \eqref{eq:dirac} as a new
system which has two half Klein-Gordon equations as linear parts, see
\eqref{CDsys} below, and we identify a null-structure in the
nonlinearity, similarly to the ideas for the Dirac-Klein-Gordon system
presented in \cite[Section 2 and 3]{dfs2d} and adapted to the Cubic
Dirac equation in dimension $n=2$ in \cite{Pe13}.  However, in
contrast to the above mentioned papers, we keep the mass term inside
the linear operator. The setup here is the two-dimensional equivalent
of \cite[Section 3]{BH} and we repeat the most important aspects.

Multiplying the cubic Dirac equation from the left with $\gamma^0$, we
obtain
\begin{equation} \label{CDmod} -i (\partial_t + \alpha \cdot \nabla +
  i \beta ) \psi= \la \psi, \beta \psi \ra \beta \psi.
\end{equation}
where $\beta=\gamma^0$ and $\alpha^j=\gamma^0 \gamma^j$ and $\alpha
\cdot \nabla = \alpha^j \partial_j$.  The new matrices satisfy
\begin{equation} \label{al} \alpha^j \alpha^k + \alpha^k \alpha^j = 2
  \delta^{jk} I_2, \qquad \alpha^j \beta + \beta \alpha^j=0.
\end{equation}

Following \cite{dfs2d} we decompose the spinor field relative to a
basis of the operator $\alpha \cdot \nabla + i \beta $ with symbol 
$\alpha \cdot \xi + \beta$. Since $(\alpha \cdot \xi + \beta)^2=
(|\xi|^2+1)I$, the eigenvalues are $\pm \la \xi \ra$.  We introduce
the projections $\Pi_{\pm}(D)$ with symbol
\[
\Pi_\pm(\xi)=\frac12 [I \mp \frac{1}{\la \xi \ra} ( \xi \cdot \alpha +
\beta)].
\]
As in \cite{BH}, we slightly deviate from \cite[formula (5)]{dfs2d} by
switching the sign in $\Pi_\pm$ for internal consistency purposes. The
key identity is
\[
-i (\alpha \cdot \nabla + i \beta ) = \langle D \rangle
(\Pi_-(D)-\Pi_+(D))
\]
where $\langle D \rangle$ has symbol $\sqrt{|\xi|^2+1}$. The following
identity, which can be verified easily at the level of the symbols,
will be important in our computations:
\[
\Pi_\pm(D) \beta = \beta (\Pi_\mp(D)\mp\frac{\beta}{\la D \ra}).\]

We then define $\psi_\pm=\Pi_\pm(D) \psi$ and split $\psi=\psi_+ +
\psi_-$. By applying the operators $\Pi_\pm(D)$ to the cubic Dirac
equation we obtain the following system of equations
\begin{equation} \label{CDsys}
  \begin{cases}
    (i\partial_t + \langle D \rangle) \psi_+ = -\Pi_+(D) (\la \psi, \beta \psi \ra \beta \psi) \\
    (i\partial_t - \langle D \rangle) \psi_- = -\Pi_-(D) (\la \psi,
    \beta \psi \ra \beta \psi).
  \end{cases}
\end{equation}
This system will replace \eqref{eq:dirac} as the object of our
research for the rest of the paper. It is obvious from the form of the
operators $\Pi_\pm$ that $\| \psi \|_{X} \approx \| \psi_+ \|_{X}+\|
\psi_- \|_{X}$ for many reasonable function spaces $X$.  In particular
we use it for $X=H^{\frac12}(\R^2)$ so that we conclude that the
initial data for \eqref{CDsys} satisfies $\psi_\pm(0) \in
H^{\frac12}(\R^2)$.

To reveal the null structure, we start with $\la \psi, \beta \psi \ra$
which, in our decomposition, is rewritten as
\[
\begin{split}
  \la \psi, \beta \psi\ra
   ={}& \la \Pi_+(D) \psi_+, \beta \Pi_+(D) \psi_+ \ra + \la \Pi_-(D) \psi_-, \beta \Pi_-(D)) \psi_- \ra \\
  & + \la \Pi_+(D) \psi_+, \beta \Pi_-(D) \psi_- \ra+ \la \Pi_-(D)
  \psi_-, \beta \Pi_+(D) \psi_+ \ra
\end{split}
\]
Next we analyze the symbols of the bilinear operators above.
\begin{lem} The following holds true
  \begin{equation} \label{PiPi}
    \begin{split}
      \Pi_\pm(\xi) \Pi_\mp(\eta) & = \mathcal{O}(\angle (\xi,\eta)) + \mathcal{O}(\la \xi \ra^{-1} + \la \eta \ra^{-1}) \\
      \Pi_\pm(\xi) \Pi_\pm(\eta) & = \mathcal{O}(\angle (-\xi,\eta)) +
      \mathcal{O}(\la \xi \ra^{-1} + \la \eta \ra^{-1})
    \end{split}
  \end{equation}
\end{lem}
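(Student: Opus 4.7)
The proof is essentially algebraic, and the plan is to reduce to the high-frequency regime before extracting the principal symbols of $\Pi_\pm$, for which the identity $\hat\Pi_+\hat\Pi_-=0$ at coincident directions becomes exact. First I would record that the matrices $\alpha^j,\beta$ are Hermitian (immediate from the explicit form of $\gamma^0,\gamma^1,\gamma^2$), and that $(\alpha\cdot\xi+\beta)^2=(|\xi|^2+1)I$ by \eqref{al}. Consequently $\Pi_\pm(\xi)$ is an orthogonal projection and $\|\Pi_\pm(\xi)\|\leq 1$. This already gives the trivial bound $\|\Pi_\pm(\xi)\Pi_\mp(\eta)\|\leq 1$, which takes care of \eqref{PiPi} whenever $\min(|\xi|,|\eta|)\lesssim 1$, since then $\la\xi\ra^{-1}+\la\eta\ra^{-1}\gtrsim 1$.

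In the remaining regime $|\xi|,|\eta|\gtrsim 1$, I would introduce the homogeneous principal symbols $\hat\Pi_\pm(\omega):=\tfrac12(I\mp\omega\cdot\alpha)$ for $\omega\in\mathbb{S}^1$. Setting $\hat\xi=\xi/|\xi|$ and expanding,
\[
\Pi_\pm(\xi)-\hat\Pi_\pm(\hat\xi) = \mp\frac{\beta}{2\la\xi\ra} \mp \frac12\Big(\frac{|\xi|}{\la\xi\ra}-1\Big)\hat\xi\cdot\alpha = \mathcal{O}(\la\xi\ra^{-1}),
\]
where the estimate on the second summand follows from $\la\xi\ra-|\xi|=(\la\xi\ra+|\xi|)^{-1}\lesssim\la\xi\ra^{-1}$ in this regime. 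The decisive algebraic point is that $(\omega\cdot\alpha)^2=I$, which forces $\hat\Pi_+(\omega)\hat\Pi_-(\omega)=0$. Writing $\hat\eta=\eta/|\eta|$ and exploiting this identity in the form $\hat\Pi_\pm(\hat\xi)\hat\Pi_\mp(\hat\xi)=0$,
\[
\hat\Pi_\pm(\hat\xi)\hat\Pi_\mp(\hat\eta) = \hat\Pi_\pm(\hat\xi)\bigl[\hat\Pi_\mp(\hat\eta)-\hat\Pi_\mp(\hat\xi)\bigr] = \pm\tfrac12\hat\Pi_\pm(\hat\xi)\bigl((\hat\xi-\hat\eta)\cdot\alpha\bigr) = \mathcal{O}(|\hat\xi-\hat\eta|).
\]
Since $|\hat\xi-\hat\eta|\lesssim \angle(\xi,\eta)$, combining with the principal-symbol approximation yields the first line of \eqref{PiPi}.

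The second line would follow from the elementary reflection identity $\hat\Pi_\pm(\omega)=\hat\Pi_\mp(-\omega)$, which is obvious from the definition. Applying it to the second factor,
\[
\hat\Pi_\pm(\hat\xi)\hat\Pi_\pm(\hat\eta) = \hat\Pi_\pm(\hat\xi)\hat\Pi_\mp(-\hat\eta) = \mathcal{O}(|\hat\xi-(-\hat\eta)|) = \mathcal{O}(\angle(-\xi,\eta))
\]
by the previous computation with $\hat\eta$ replaced by $-\hat\eta$. Adding back the $\mathcal{O}(\la\xi\ra^{-1}+\la\eta\ra^{-1})$ remainder from the principal-symbol approximation completes the proof. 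No serious obstacle arises: the only bookkeeping concerns the transition between the high- and bounded-frequency regimes, and this is handled cleanly by the trivial bound $\|\Pi_\pm\|\leq 1$ in the latter.
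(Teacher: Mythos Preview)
Your proof is correct. The paper does not actually prove this lemma in-line; it cites \cite{dfs2d}, \cite{Pe13}, and \cite[Lemma~3.1]{BH}, remarking only that the extra $\beta$ term in the present definition of $\Pi_\pm$ is responsible for the additional $\mathcal{O}(\la\xi\ra^{-1}+\la\eta\ra^{-1})$ error. Your argument is precisely the standard one behind those references: pass to the principal (homogeneous) symbol $\hat\Pi_\pm(\omega)=\tfrac12(I\mp\omega\cdot\alpha)$, use $(\omega\cdot\alpha)^2=I$ to get $\hat\Pi_+\hat\Pi_-=0$ at coincident directions, and absorb the $\beta/\la\xi\ra$ remainder into the stated error term. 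So the approach is the same as the one the paper defers to.
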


Proofs of this result can be found \cite{dfs2d} or \cite{Pe13} modulo
the fact that the operators $\Pi_\pm$ there do not include the $\beta$
factor; but this is accounted by the additional factor of
$\mathcal{O}(\la \xi \ra^{-1} + \la \eta \ra^{-1})$ in the estimate
above, see also \cite[Lemma 3.1]{BH} for the three-dimensional
case. For a detailed explanation why the above result plays the role
of a null structure we refer the reader to \cite[Section 3]{BH}.

\section{Function Spaces}\label{fspaces}
Based on the estimates developed in Section \ref{sect:le} we now
define the function spaces in which we will perform the Picard
iteration for \eqref{CDsys}. The construction here is a significant refinement of
\cite[Section 4]{BH}. Some of the similarities to the function spaces
used in the wave map problem \cite{Kr03,tao,tat} are highlighted by
using a similar notation.

For $1\leq p<\infty$ we define
\[
\|f\|_{V^{p}_{\pm \la D \ra}}=\|f\|_{L^\infty_t
  L^2_x}+\Big(\sup_{(t_\nu) \in \mathcal{Z}} \sum_{\nu \in \N}
\|e^{\mp it_{\nu+1}\la D\ra }f(t_{\nu+1})-e^{\mp it_\nu\la D\ra
}f(t_\nu)\|_{L^2_x}^p\Big)^{\frac{1}{p}},
\]
where the supremum is taken over the set $\mathcal{Z}$ of all
increasing sequences.

For the following, we consider a fixed $r \in \N$ (which is implicit
in the definition, cf.\ Subsection \ref{endpnt}).

For low frequencies, that is for $k \leq 99$, we define
\[
\| f \|_{S^\pm_{k}} = \| f \|_{V^{2}_{\pm \la D \ra}} + \sup_{\om \in
  \mathbb{S}^1} \| f \|_{\sum_{\Lambda_{k,\om}}
  L^2_{t_{\Th}}L^\infty_{x_{\Th}}}.
\]
For the high frequencies, that is $k \geq 100$, the norm has a
multiscale structure. We recall the notation convention that
$\Lambda_{j,\ka_1} = \Lambda_{j ,\om(\ka_1)}$, and similarly for
$\Omega_{j,\ka_1}$. Given $l \leq k+10$, $\ka \in \mathcal{K}_l$ and
$j \geq 89$, we define structures $S^\pm[k,\ka,j]$.

If $89 \leq j = l-10 \leq k-10$ or $l=k+10 \wedge j \geq k+10$, let
\[
\| f \|_{S^\pm[k,\ka,j]} = \sup_{\ka_1 \in \mathcal{K}_{j+10}: \atop
  \dist(\kappa, \kappa_1) \leq 2^{-l+3} } \sup_{\Th \in
  \Lambda_{j,\ka_1}} 2^{-l} \| f
\|_{L^\infty_{t^\pm_{\Th}}L^2_{x^\pm_{\Th}}}.
\]
If $\max(90,l-9) \leq \min(j,k) \leq l+9$, let
\[
\| f \|_{S^\pm[k,\ka,j]} = \sup_{\ka_1 \in \mathcal{K}_{j+10}: \atop
  2^{-l-3}\leq \dist(\kappa, \kappa_1) \leq 2^{-l+3} } \sup_{\Th \in
  \Omega_{j,\ka_1}} 2^{-\frac{l}2} \| f
\|_{L^\infty_{x^{2,\pm}_{\Th}}L^2_{(t,x^1)^\pm_{\Th}}}
\]
If $\max(90,l+10) \leq \min(j,k) $, let
\[
\| f \|_{S^\pm[k,\ka,j]} = \sup_{\ka_1 \in \mathcal{K}_{j+10}: \atop
  2^{-l-3}\leq \dist(\kappa, \kappa_1) \leq 2^{-l+3} } \sup_{\Th \in
  \Lambda_{j,\ka_1}} 2^{-l} \| f
\|_{L^\infty_{t^\pm_{\Th}}L^2_{x^\pm_{\Th}}}
\]
Then for $\ka \in \Ka_l$ we define the cap localized structure as
\[
\| f \|_{S^\pm[k,\ka]} = \| f \|_{L^\infty_t L^2_x} +
\sup_{\max(89,l-10) \leq j} \| f \|_{S^\pm[k,\ka,j]}.
\]
We define the endpoint structure
\[
\| f \|_{END^\pm_k}= \Big(\sum_{\ka \in \mathcal{K}_{k+10}} 2^{-k}\|
P_{\ka} f \|^2_{\sum_{\Omega_{k,\ka}}
  L^2_{x^2_{\Th}}L^\infty_{(t,x^1)_{\Th}}} +\| P_{\ka} f
\|^2_{\sum_{\Lambda_{k,\ka}} L^2_{t^\pm_{\Th}}L^\infty_{x^\pm_{\Th}}}
\Big)^{\frac12}.
\]

Next, for some $\frac43 < p < \frac85$ (any $p$ in this range will
work, see Section \ref{sect:dirac}) we define
\[
\begin{split}
  \| f \|_{S^\pm_k} =& \left( \sum_{\ka \in \Ka_k} \|P_\ka f
    \|^2_{V^{2}_{\pm \la D \ra}} \right)^\frac12
  + 2^{(\frac1p-1)k}\sup_{m} 2^{m} \|Q_{m}^\pm f \|_{L^p_t L^2_x} \\
  & + \| f \|_{END^\pm_k} +\sup_{1\leq l\leq k+10} \Big(\sum_{\ka \in
    \mathcal{K}_l}\| Q_{\prec k-2l}^\pm P_{\ka} f \|^2_{S^\pm[k;
    \ka]}\Big)^{\frac12}
\end{split}
\]

\begin{rmk}\label{rmk:cap-sum}
  If $l_1 \geq l_2$, we have that for each $\ka_1 \in
  \mathcal{K}_{l_1}$ the number of $\ka_2 \in \mathcal{K}_{l_2}$ with
  $\ka_1 \cap \ka_2\ne \emptyset$ is uniformly bounded. As a
  consequence, essential parts of this norm are square-summable with
  respect to caps: For later purposes, we note that for $l \leq l'$,
  \begin{equation*}
    \sum_{\ka\in \mathcal{K}_l}\|P_{\ka}
    f\|_{V^2_{\pm \la D \ra}}^2\ls \sum_{\ka' \in \mathcal{K}_{l'}}\|P_{\ka'}
    f\|_{V^2_{\pm \la D \ra}}^2,
  \end{equation*}
  and, for all $1\leq l \prec k$,
  \begin{equation*}
    \sum_{\ka\in \mathcal{K}_l}\|P_{\ka}
    f\|_{V^2_{\pm \la D \ra}}^2\ls \|f\|_{S^\pm_k}^2.
  \end{equation*}
  Similarly, we have
  \begin{align*}
    &\sum_{\ka' \in \mathcal{K}_l}\Big\{ \sum_{\ka \in
      \mathcal{K}_{k+10}} 2^{-k}\| P_{\ka'}P_{\ka} f
    \|^2_{\sum_{\Omega_{k,\ka}}
      L^2_{x^2_{\Th}}L^\infty_{(t,x^1)_{\Th}}} +\| P_{\ka'}P_{\ka} f
    \|^2_{\sum_{\Lambda_{k,\ka}}
      L^2_{t^\pm_{\Th}}L^\infty_{x^\pm_{\Th}}} \Big\}\\
    \ls & \sum_{\ka \in \mathcal{K}_{k+10}} 2^{-k}\| P_{\ka} f
    \|^2_{\sum_{\Omega_{k,\ka}}
      L^2_{x^2_{\Th}}L^\infty_{(t,x^1)_{\Th}}} +\| P_{\ka} f
    \|^2_{\sum_{\Lambda_{k,\ka}}
      L^2_{t^\pm_{\Th}}L^\infty_{x^\pm_{\Th}}}\ls \|f\|_{END_k^\pm}^2.
  \end{align*}
  For this reason we introduce the norm
  \[
  \begin{split}
    \| f \|_{\lS^\pm_k} =& \left( \sum_{\ka \in \Ka_k} \|P_\ka f
      \|^2_{V^{2}_{\pm \la D \ra}} \right)^\frac12 + \| f
    \|_{END^\pm_k}
  \end{split}
  \]
  which has now the property that for any $1 \leq l \leq k+10$
  \begin{equation} \label{l2S} \sum_{\ka \in \Ka_l} \| P_\ka f
    \|_{\lS^\pm_k}^2 \ls \| f \|_{\lS^\pm_k}^2.
  \end{equation}
  For any $|l-l'|\leq 10$, we also have
  \begin{equation*}\sum_{\ka'\in \mathcal{K}_{l'}} \sum_{\ka \in
      \mathcal{K}_l}\|
    P_{\ka'} Q_{\prec k-2l}^\pm P_{\ka} f \|^2_{S^\pm[k; \ka]}
    \ls\|f\|_{S_k^\pm}^2,
  \end{equation*}
  where we use Part i) of Lemma \ref{stable} below.
\end{rmk}

The space $S^{\pm, \sigma}$ corresponding to regularity at the level
of $H^\sigma(\R^2)$ is the complete subspace of
$L^\infty(\R,H^\sigma(\R^2))$ defined by the norm
\[
\| f \|_{S^{\pm, \sigma}} = \| P_{\leq 89} f \|_{S_{ 89}^\pm} +
\Big(\sum_{k \geq 90} 2^{2k \sigma} \| P_k f
\|^2_{S^\pm_k}\Big)^{\frac12}.
\]
Recall from Subsection \ref{endpnt} that this construction is useful
up to time $2^r$, so for any closed interval $I \subset (-2^r,2^r)$ we
define the space $S^{\pm, \sigma}(I)$ of all functions on $I$ which
have extensions to functions in $S^{\pm, \sigma}$, with norm
\[
\|f\|_{S^{\pm, \sigma}(I)}=\inf_{F \in S^{\pm,
    \sigma}}\{\|F\|_{S^{\pm, \sigma}}: F|_I=f\}.
\]
Note that the space $S^{\pm, \sigma}_{C}(I) :=S^{\pm, \sigma}(I)\cap
C(I,H^\sigma(\R^2))$ is a closed subspace of $S^{\pm, \sigma}(I)$.

Now we construct the space for the nonlinearity. For $1\leq q\leq
\infty$, $b \in \R$, we define
\[\|f\|_{\dot{X}^{\pm,b,q}}=\big\|\big(2^{bm}\|Q_{m}^\pm
f\|_{L^2}\big)_{m \in \Z }\big\|_{\ell^q_m}.\]

For the low frequency part of the nonlinearity we define
\[
\| f \|_{N^{\pm}_{0}} = \inf_{f=f_1+f_2+f_3} \Big\{\|f_1
\|_{\dot{X}^{\pm,-\frac12,1}} + \| f_2 \|_{L^1_tL^2_x} +\| f_3
\|_{L^\frac43_{t,x}} \Big\} +\|f\|_{L^p_tL^2_x}.
\]
Let $(N_{0}^{\pm})^*$ denote the dual of $N_{0}^{\pm}$ and let
$S^{\pm,w}_{0}$ be endowed with the norm
\begin{equation}\label{eq:skweakl}
  \| f \|_{S^{\pm,w}_{0}}= \| f \|_{L^\infty_t L^2_x} + \| f
  \|_{\dot X^{\pm,\frac12,\infty}}. 
\end{equation} 
Then, we observe that for $k \leq 99$,
\begin{equation} \label{duall} S_{k}^\pm \subset (N_{0}^{\pm})^*
  \subset S_{0}^{\pm,w}.
\end{equation}

Next, let $k \geq 100$. For $1 \leq l \leq k+10$ we consider $\ka \in
\mathcal{K}_l$ and consider the following types of atoms:

\bf A1 \rm: If $ 89 \leq j = l-10\leq k-10$ or $l=k+10 \wedge j\geq
k+10$, functions $f_\Theta$ with \[2^{l} \| f_\Theta
\|_{L^1_{t^\pm_{\Theta}}L^2_{x^\pm_{\Theta}}}=1,\] where $\Theta \in
\Lambda_{j,\ka_1}$ and $\ka_1 \in \mathcal{K}_{j+10}$ with
$\dist(\ka_1,\ka) \leq 2^{-l+3}$.

\bf A2 \rm: If $\max(90,l-9) \leq \min(j,k) \leq l+9$, functions
$f_{\Theta}$ with \[2^{\frac{l}2} \| f_\Theta
\|_{L^1_{x^{2,\pm}_{\Theta}}L^2_{(t,x^1)^\pm_{\Theta}}}=1,\] where
$\Theta \in \Omega_{j,\ka_1}$ and $\ka_1 \in \mathcal{K}_{j+10}$ with
$2^{-l-3} \leq \dist(\ka_1,\ka) \leq 2^{-l+3}$,

\bf A3 \rm: If $\max(90,l+10) \leq j \leq \min(j,k)$, functions
$f_\Theta$ with \[2^{l} \| f_\Theta
\|_{L^1_{t^\pm_{\Theta}}L^2_{x^\pm_{\Theta}}}=1,\] where $\Theta \in
\Lambda_{j,\ka_1}$ and $\ka_1 \in \mathcal{K}_{j+10}$ with $2^{-3}
\leq 2^l \dist(\ka_1,\ka) \leq 2^{3}$.

We then define, in the standard way, $N^\pm[k,\ka]$ to be the atomic
space based on the above atoms.

Now, similarly to \cite{BH}, we define the following atomic structure
\begin{equation}\label{eq:n-atom}
  \begin{split}
    \| f \|_{N_k^{\pm,at}}=& \inf_{f=f_1+f_2+\sum_{1 \leq l \leq k+10} g_{l} } \Big\{\|f_1 \|_{\dot{X}^{\pm,-\frac12,1}} + \| f_2 \|_{L^1_tL^2_x} \\
    {}&+ \sum_{1 \leq l \leq k+10} \Big( \sum_{\ka \in \mathcal{K}_l}
    \| P_{\ka} g_{l} \|_{N^\pm[k, \ka]}^2 \Big)^\frac12\Big\}
  \end{split}
\end{equation}
where the atoms $g_l$ in the above decomposition are assumed to be
localized at frequency $2^k$ and modulation $\ll 2^{k-2l}$, more
precisely that $\tilde{Q}_{\prec k-2l}^\pm \tilde{P}_k g_l = g_l$.

The third component in $N_k^{\pm,at}$, i.e. the $\sum_{1 \leq l \leq
  k+10} g_{l}$, will henceforth be called the cap-localized structure.
The atoms $g_l$ are localized in frequency and modulation, while when
they are measured in $N^\pm[k,\ka]$ the atoms $a_\Theta$ in the
decomposition $g_l = \sum_\Theta a_\Theta$ are not assumed to keep
that localization. However, by applying the operator $\tilde{Q}_{\prec
  k-2l}^\pm \tilde{P}_{k,\ka}$ to the decomposition and using
\cite[Lemma~4.1~i)]{BH} (which holds true in dimension $2$ verbatim)
one obtains a new decomposition with similar norm. From now on our
convention is that we assume that the atoms $a_\Theta$ in the atomic
decomposition have the correct frequency and modulation localization.

Let $(N_{k}^{\pm,at})^*$ denote the dual of $N_{k}^{\pm,at}$ and
$S_k^{\pm,w}$ be endowed with the norm
\begin{equation}\label{eq:skweak}
  \| f \|_{S^{\pm,w}_k}= \| f \|_{L^\infty_t L^2_x} + \| f \|_{\dot X^{\pm,\frac12,\infty}} 
  + \sup_{1 \leq l \leq k+10} \Big( \sum_{\ka \in \mathcal{K}_l} \|
  Q_{\prec k-2l}^\pm P_{\ka} f \|^2_{S^\pm[k;\ka]}\Big)^{\frac12}.
\end{equation} 
Then, we record that
\begin{equation} \label{dual} S_k^\pm \subset (N_{k}^{\pm,at})^*
  \subset S_k^{\pm,w},
\end{equation}
with continuous embeddings, i.e.\
\[ \|f\|_{S_k^{\pm,w}}\ls \|f\|_{(N^{\pm,at}_{k})^{*}}\ls
\|f\|_{S_k^\pm}.\]

Now we are in a position to define the space for dyadic pieces of the
nonlinearity for high frequencies by setting
\[
\| f \|_{N^\pm_k} = \| f \|_{N^{\pm,at}_k} + 2^{(\frac1p-1)k} \| f
\|_{L^p_t L^2_x}.
\]
The space for the nonlinearity at regularity $H^\sigma$ is defined via
\[
\| f \|_{N^{\pm, \sigma}} = \| P_{\leq 89} f \|_{N_{\leq 89}^\pm} +
\Big(\sum_{k \geq 90} 2^{2k \sigma} \| P_k f
\|^2_{N^\pm_k}\Big)^{\frac12}.
\]

Now we show why the above structures are relevant for the equations we
study.  We first note a technical result on boundedness properties of
certain frequency and modulation localization operators.

\begin{lem}\label{stable}

  {\rm i)} For all $k \geq 100$ and $m \geq 1$, the operators $ \tilde
  Q_{\leq m}^\pm$ are bounded on $S^\pm_k, N^\pm_k$.
  
  {\rm ii)} For all $k \geq 100, 1\leq l \leq k+10, \ka \in
  \mathcal{K}_l$, and functions $u$ localized at frequency $2^k$, we
  have
  \begin{equation} \label{sta} \| \left( \Pi_\pm(D) - \Pi_\pm(2^{k}
      \omega(\ka)) \right) P_{\ka} u \|_{S} \ls 2^{-l} \| P_{\ka} u
    \|_{S}
  \end{equation}
  for $S \in \{ S_k^\pm, S_k^{\pm,w} \}$.
\end{lem}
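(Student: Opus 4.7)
For Part (i), I would verify that $\tilde Q^\pm_{\leq m}$ is bounded, uniformly in $m$, on each component of the defining norms. The starting observation is that
\[
\tilde Q^\pm_{\leq m} = e^{\pm it\la D\ra}\,\tilde\chi_{\leq m}(D_t)\,e^{\mp it\la D\ra},
\]
i.e.\ after conjugation by the half-wave propagator the operator becomes pure time convolution against $\check{\tilde\chi}_{\leq m}$, whose $L^1_t$ norm is uniformly bounded. This immediately gives boundedness on $V^2_{\pm\la D\ra}$. The $\dot X^{\pm,\cdot,\cdot}$ pieces of $N^\pm_k$ and the modulation-weighted $L^p_tL^2_x$ part of $S^\pm_k$ are handled by the identities $Q^\pm_{m'}\tilde Q^\pm_{\leq m}=Q^\pm_{m'}$ for $m'\prec m$ and $=0$ for $m'\succ m$. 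For the frame-adapted $END^\pm_k$ and $S^\pm[k,\ka,j]$ components I would use the integral representation
\[
(\tilde Q^\pm_{\leq m} u)(t,\cdot) = \int \check{\tilde\chi}_{\leq m}(s)\,e^{\pm is\la D\ra} u(t-s,\cdot)\,ds,
\]
combined with the fact that the frames $\Theta\in \Lambda_{k,\ka},\Omega_{k,\ka}$ are aligned with the propagation of frequency-$2^k$ angular-cap-$\ka$ data (as in Theorem \ref{thm:Energy}), so that $e^{\pm is\la D\ra}$ essentially acts as translation along $t_\Theta$ on such pieces; integrating in $s$ against the $L^1$ kernel yields a uniform bound. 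The $N^\pm_k$ bound on the cap-localized atomic part then follows by checking that $\tilde Q^\pm_{\leq m}$ maps each atom class \textbf{A1}, \textbf{A2}, \textbf{A3} into itself modulo a bounded factor, by the same reasoning.

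For Part (ii), the plan is a symbol expansion. On the support of $\tilde\chi_k\tilde\eta_\ka$, using that $\Pi_\pm(\xi)$ depends on $\xi$ only through $\xi/\la\xi\ra$ and is smooth off zero, I have
\[
\bigl|\Pi_\pm(\xi)-\Pi_\pm(2^k\omega(\ka))\bigr|\ls 2^{-l}+\la\xi\ra^{-1}\ls 2^{-l},
\]
the last inequality using $l\leq k+10$, together with the corresponding scaled derivative bounds $|\partial_\xi^\alpha(\Pi_\pm(\xi)-\Pi_\pm(2^k\omega(\ka)))|\ls 2^{-l}\cdot 2^{-k|\alpha|}$ (each angular or radial $\xi$-derivative costing $|\xi|^{-1}$). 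Rescaling $\xi=2^k\eta$ and then introducing cap-adapted coordinates $\zeta_1=2^l\eta\cdot\omega(\ka)^\perp$, $\zeta_2=\eta\cdot\omega(\ka)$, the truncated symbol
\[
m(\xi):=\bigl(\Pi_\pm(\xi)-\Pi_\pm(2^k\omega(\ka))\bigr)\tilde\chi_k(\xi)\tilde\eta_\ka(\xi)
\]
pulls back to a smooth function on a fixed compact set with all mixed derivatives bounded by $2^{-l}$. Fourier expanding on a bounding rectangle and unwinding the rescaling yields
\[
m(\xi) = 2^{-l}\sum_\nu c_\nu\, e^{ix_\nu\cdot\xi},\qquad \sum_\nu|c_\nu|\ls 1.
\]
Since $\widehat{P_\ka u}\subset\tilde A_{k,\ka}$,
\[
\bigl(\Pi_\pm(D)-\Pi_\pm(2^k\omega(\ka))\bigr)P_\ka u \;=\; 2^{-l}\sum_\nu c_\nu\,\tau_{-x_\nu}P_\ka u,
\]
where $\tau_a$ denotes spatial translation by $a$.

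The proof would conclude by translation-invariance in $x$ of each $S\in\{S^\pm_k,S^{\pm,w}_k\}$, verified component by component. This is immediate for $V^2_{\pm\la D\ra}$, the modulation and $\dot X^{\pm,\cdot,\cdot}$ norms; for the atomic norms $\sum_{\Lambda_{k,\ka}}$, $\sum_{\Omega_{k,\ka}}$ and the $S^\pm[k,\ka,j]$ pieces, spatial translation of any decomposition gives a valid decomposition over the same finite set of frames (whose index sets do not depend on the spatial origin), and each building block $L^2_{t_\Theta}L^\infty_{x_\Theta}$- or $L^\infty_{t_\Theta}L^2_{x_\Theta}$-type norm is invariant under the linear change $(t,x)\mapsto(t_\Theta,x_\Theta)$. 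The main technical obstacle I anticipate is in Part (i) for the frame-adapted norms, where one must reconcile the time convolution with the $(\lambda,\omega)$-frame structure; this is most delicate for the $\Omega_{k,\ka}$ norms, where the $(t,x^1)_\Theta$ slab is nearly characteristic and the propagator does not act as a pure translation.
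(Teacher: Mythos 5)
Your Part (ii) and the easy components of Part (i) are fine, but the heart of this lemma is the boundedness of $\tilde Q^\pm_{\leq m}$ on the frame-adapted components, and there your argument has a genuine gap. First, you misidentify the frames: in the $S^\pm[k,\ka,j]$ components the frames are not aligned with the cap carrying the data; they are $\Th \in \Lambda_{j,\ka_1}$ or $\Omega_{j,\ka_1}$ for a \emph{neighbouring} cap $\ka_1$ with $\dist(\ka,\ka_1)\approx 2^{-l}$ and a possibly different frequency parameter $j$, and it is precisely their transversality to the wave packets of $P_\ka u$ (as in Theorem \ref{thm:Energy}) that makes the $L^\infty_{t_\Th}L^2_{x_\Th}$ norms finite. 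Second, and more importantly, the piece being measured there is $Q^\pm_{\prec k-2l}P_\ka f$ with $\ka\in\Ka_l$, an angular sector of width $2^{-l}$, not $2^{-k-10}$: over such a sector the propagator $e^{\pm is\la D\ra}$ is \emph{not} ``essentially a translation'' for the relevant $|s|\ls 2^{-m}$, $m\geq 1$, since the quadratic spread of the phase is of size $|s|2^{k-2l}\gg 1$ in the main regime $m\prec k-2l$. What actually has to be used is (a) the reduction to $m\prec k-2l$ (for $m\succeq k-2l$ the operator acts as the identity on these low-modulation pieces), and (b) the quantitative transversality $|\lambda-\om(\ka_1)\cdot\xi/\la\xi\ra|\approx 2^{-2l}$ (resp.\ $|\xi\cdot\om(\ka_1)^\perp|\approx 2^{k-l}$ for the $\Omega$-frames), which gives $\tau_\Th$-derivative bounds of size $(2^{m+2l})^{-\beta}$ for the symbol $\tilde\chi_{\leq m}(\tau\mp\la\xi\ra)\tilde\chi_k\tilde\eta_\ka$, hence a kernel in $L^1_{t_\Th}$ uniformly in $\xi_\Th$ and boundedness on $L^\infty_{t_\Th}L^2_{x_\Th}$; this is the paper's argument. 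Your representation $\tilde Q^\pm_{\leq m}u(t)=\int\check{\tilde\chi}_{\leq m}(s)\,e^{\pm is\la D\ra}u(t-s)\,ds$ can be salvaged, because the composed operator for fixed $s$ is the space-time multiplier $e^{-is(\tau\mp\la\xi\ra)}$, which is mild in the $\tau_\Th$-direction on the low-modulation, cap-localized support by the same transversality computation; but that computation is exactly the missing ingredient, and the ``aligned frames, hence translation'' heuristic does not supply it. (Your closing worry that the $\Omega$-norms are the delicate case because the slab is nearly characteristic is also off target: the construction guarantees $|\xi\cdot\om^\perp|\approx 2^{k-l}$ there, and the same symbol estimate applies.)

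The rest is acceptable. The $V^2_{\pm\la D\ra}$, $\dot X^{\pm,\cdot,\cdot}$ and modulation-weighted $L^p_tL^2_x$ components are handled as you say (for $m'\approx m$ your two identities do not apply and you need the uniform $L^p_tL^2_x$-boundedness of $\tilde Q^\pm_{\leq m}$, a trivial fix), and for $END^\pm_k$, where the caps really are of size $2^{-k-10}$, your description of $e^{\pm is\la D\ra}\tilde P_{k,\ka}$ for $|s|\ls 1$ as a modulated translation composed with an $L^1$-kernel operator is a legitimate variant of the paper's observation that the kernel of $\tilde Q^\pm_{\leq m}\tilde P_\ka$ lies in $L^1_{t,x}$, since averages of translations are bounded on every translation-invariant mixed norm. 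Part (ii), whose proof the paper omits with a reference to \cite{BH}, is correct in substance: note only that your displayed bound $|\partial_\xi^\alpha(\Pi_\pm(\xi)-\Pi_\pm(2^k\om(\ka)))|\ls 2^{-l}2^{-k|\alpha|}$ is false for tangential derivatives (differentiation destroys the cancellation, leaving only $2^{-k|\alpha|}$), but your anisotropic rescaling $\zeta_1=2^l\eta\cdot\om(\ka)^\perp$ repairs this, since a $\zeta_1$-derivative costs $2^{k-l}\cdot 2^{-k}=2^{-l}$, so the rescaled, truncated symbol does have all derivatives $\ls 2^{-l}$ and the Fourier-series decomposition into modulated translations together with translation invariance of $S^\pm_k$ and $S^{\pm,w}_k$ gives \eqref{sta}.
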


\begin{proof}
  i) We start with the boundedness of $ \tilde Q_{\leq m}^\pm$ on the
  components of $S^\pm_k$.  The boundedness of $\tilde Q_{\leq m}^\pm$
  on the $V^2_{\pm \la D \ra}$ is standard, see
  e.g. \cite[Cor.~2.18]{hhk09}.  The boundedness of $\tilde Q_{\leq
    m}^\pm$ on the
  \[
  2^{(\frac1p-1)k}\sup_{m'} 2^{m'} \|Q_{m'}^\pm f \|_{L^p_t L^2_x}
  \]
  structure follows from the commutativity property $Q_{m'}^\pm \tilde
  Q_{\leq m}^\pm = \tilde Q_{\leq m}^\pm Q_{m'}^\pm$ and the
  boundedness of $\tilde Q_{\leq m}^\pm$ on the $L^p_t L^2_x$ type
  spaces.

  Next, we notice that the kernel of $\tilde Q_{\leq m}^\pm \tilde
  P_\ka$ belongs to $L^1_{t,x}$ under the hypothesis $m \geq 1$ and
  $\kappa \in \Ka_{k+10}$. Using that $P_\ka \tilde Q_{\leq m}^\pm =
  \tilde Q_{\leq m}^\pm \tilde P_\ka P_\ka $, this implies the
  boundedness of $\tilde Q_{\leq m}^\pm$ on the
  \[
  \Big(\sum_{\ka \in \mathcal{K}_{k+10}} 2^{-k}\| P_{\ka} f
  \|^2_{\sum_{\Omega_{k,\ka}} L^2_{x^2_{\Th}}L^\infty_{(t,x^1)_{\Th}}}
  +\| P_{\ka} f \|^2_{\sum_{\Lambda_{k,\ka}}
    L^2_{t^\pm_{\Th}}L^\infty_{x^\pm_{\Th}}} \Big)^{\frac12}
  \]
  component of $S_k^\pm$.

  For the boundedness of $\tilde Q_{\leq m}^\pm$ on the $S^\pm[k,\ka]$
  components we use an argument similar to the one used in \cite[Lemma
  4.1]{BH}, part ii). $S^\pm[k,\ka]$ itself has several components and
  we will provide a complete argument for one of them; this will also
  serve as a template for the other ones. With $\ka \in \Ka_l$ for
  some $1 \leq l \leq k+10$, it is enough to consider only the case $m
  \prec k-2l$. We fix the $+$ sign choice, fix $j$ with $\max(90,l+10)
  \leq \min(j,k) $, consider $\ka_1$ with $ 2^{-l-3}\leq \dist(\kappa,
  \kappa_1) \leq 2^{-l+3}$ and $\Th \in \Lambda_{j,\ka_1}$.

  The operator $\tilde Q_{\leq m}^+ \tilde P_{k,\kappa}$ is a Fourier
  multiplier whose symbol \[a_{m,k,\ka}(\tau,\xi)=\tilde \chi_{\leq m}
  (\tau - \la \xi \ra) \tilde \chi_k (\xi) \tilde \eta_\ka(\xi)\]
  satisfies $  |\partial_{\tau_\Th}^\beta a_{m,k,\ka}| \ls (2^{m+2l})^{-\beta}$.
  The inverse Fourier transform of $a_{m,k,\ka}$ with respect to
  $\tau_{j,\ka_1}$ satisfies
  \[
  |K_{l,k,\ka}(t_\Th,\xi_\Th)| \ls_N 2^{m+2l}(1+|t_\Th|2^{m+2l})^{-N},
  \text{ for any }N \in \N.
  \]
  From this we obtain the uniform bound
  \[
  \| K_{l,k,\ka} \|_{L^1_{t_\Th}L^\infty_{\xi_\Th}} \ls 1.
  \]
  On the other hand we have
  \[
  \mathcal{F}_{\xi_\Th} ( \tilde Q^+_{m} \tilde P_{k,\ka} f) =
  K_{l,k,\ka} *_{t_\Th} \mathcal{F}_{\xi_\Th} f,
  \]
  where one performs convolution with respect to $t_\Th$ variable
  only. From the above statements it follows that $\tilde Q_{\leq m}^+
  \tilde P_{k,\kappa}$ is bounded on $L^\infty_{t_\Th}L^2_{x_\Th}$.

  Proving the bounds for $\tilde Q_{\leq m}^\pm$ on the components of $N^\pm_k$
  is done in an entirely similar way.

  ii) The proof is 
similar to \cite[Lemma 4.1]{BH} and therefore omitted.
\end{proof}

We continue with a few preparatory results. In order to later deal
with the $V^2_{\pm \la D \ra}$ structure, we show that the analogue of
the \emph{fungibility estimate} \cite[formula (159)]{stta10} holds in
our spaces, more precisely
\begin{lem}\label{lem:fung}
  For all $g=\tilde{P}_k g$ and any collection $(I_\nu)_{\nu \in \N}$
  of disjoint intervals the estimate
  \begin{equation}\label{eq:fung}
    \sum_\nu \|1_{I_\nu}g\|^2_{N_k^\pm}\ls \|g\|_{N_k^\pm}^2
  \end{equation}
  holds true, uniformly in $k\geq 100$.
\end{lem}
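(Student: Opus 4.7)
The plan is to argue by the atomic/infimum structure of the $N_k^\pm$ norm. Given $\epsilon>0$, choose a near-optimal decomposition
\[
g = f_1 + f_2 + \sum_{1\leq l\leq k+10} g_l, \qquad g_l = \sum_{\ka \in \Ka_l} P_\ka g_l,
\]
together with the $L^p_t L^2_x$ contribution, whose combined norm is at most $(1+\epsilon)\|g\|_{N_k^\pm}$. For each component I produce a decomposition of $1_{I_\nu} g$ and bound the total squared sum of the resulting norms by $\|g\|_{N_k^\pm}^2$, exploiting disjointness of the intervals $(I_\nu)$.

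The easy pieces come first. For the $L^p_t L^2_x$ tail with $\tfrac43 < p < \tfrac85 < 2$, disjointness yields $\sum_\nu \|1_{I_\nu} g\|_{L^p L^2}^p = \|g\|_{L^p L^2}^p$, and the elementary embedding $\ell^p \hookrightarrow \ell^2$ (valid since $p\leq 2$) delivers $\sum_\nu \|1_{I_\nu} g\|_{L^p L^2}^2 \leq \|g\|_{L^p L^2}^2$. For the $L^1_t L^2_x$ piece of $f_2$, disjointness gives $\sum_\nu \|1_{I_\nu} f_2\|_{L^1 L^2} = \|f_2\|_{L^1 L^2}$, and $\ell^1 \hookrightarrow \ell^2$ finishes this piece.

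For each atomic cap-localized component, I write $P_\ka g_l = \sum_\Theta c_\Theta a_\Theta$ with atoms $a_\Theta$ of type A1, A2, or A3, and slice each atom by $1_{I_\nu}$. Since $|1_{I_\nu}|\leq 1$, the trivial contraction estimate $\|1_{I_\nu} a_\Theta\|_{L^1_{t^\pm_\Theta} L^2_{x^\pm_\Theta}} \leq \|a_\Theta\|_{L^1_{t^\pm_\Theta} L^2_{x^\pm_\Theta}}$ (and the analog for A2) holds. The frequency and modulation localization required by the atomic structure is restored by reapplying $\tilde Q^\pm_{\prec k-2l}\tilde P_k$, which remains bounded on these frame-adapted norms by Lemma~\ref{stable}~i). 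Crucially, the time intervals $I_\nu$ are disjoint in $t$, and the coordinate change to $(t_\Theta, x_\Theta)$ being linear preserves disjointness inside each frame slice; hence one may $\ell^2$-sum the sliced atoms by a Minkowski/orthogonality argument and absorb the result into $N^\pm[k,\ka]$ with constants uniform in $k$.

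The main obstacle is the $\dot X^{\pm,-\frac12,1}$ piece carrying $f_1$: multiplication by $1_{I_\nu}$ does not preserve modulation localization. My plan is to decompose $f_1 = \sum_m a_m$ with $a_m = Q^\pm_m a_m$, $\|a_m\|_{L^2}\leq 2^{m/2}c_m$ and $\sum_m c_m \lesssim \|f_1\|_{\dot X^{-\frac12,1}}$. For each sliced atom $1_{I_\nu} a_m$ I use two complementary estimates: first, convolution with $\widehat{1_{I_\nu}}$ (a sinc-type kernel) shows that the redistributed modulation piece $Q^\pm_{m'}(1_{I_\nu} a_m)$ obeys $\|Q^\pm_{m'}(1_{I_\nu} a_m)\|_{L^2} \lesssim \min(1, (|I_\nu|2^{m'})^{-N}) \|a_m\|_{L^2}$ for $m' \gg m$, which after summation in $m'$ yields a $\dot X^{-\frac12,1}$ control of $1_{I_\nu} a_m$ by $c_m$ with a bounded constant; second, for modulations that are too small relative to $|I_\nu|^{-1}$ one absorbs the sliced piece into the $L^1 L^2$ component using $\|1_{I_\nu} a_m\|_{L^1 L^2} \leq |I_\nu|^{1/2}\|a_m\|_{L^2}$. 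Combining these with disjointness of $(I_\nu)$ (so the residual modulation projections are orthogonal in $t$) and the freedom to redistribute between $\dot X^{-\frac12,1}$ and $L^1 L^2$ in the infimum definition of $N_k^{\pm,at}$, one obtains $\sum_\nu \|1_{I_\nu} f_1\|_{N_k^\pm}^2 \lesssim \|f_1\|_{\dot X^{-\frac12,1}}^2$. The delicate step is the case analysis matching the modulation scale $2^m$ with the interval length $|I_\nu|$ so as to extract a constant independent of both $m$ and $|I_\nu|$.
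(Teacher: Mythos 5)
Your overall skeleton (treating the $L^p_tL^2_x$ and $L^1_tL^2_x$ pieces trivially, then handling the $\dot X^{\pm,-\frac12,1}$ atoms and the cap-localized structure by redistributing between components, with a short/long interval dichotomy) is the same as the paper's, but two of your steps have genuine gaps. In the cap-localized structure you cannot simply ``restore'' the localization by reapplying $\tilde Q^\pm_{\prec k-2l}\tilde P_k$: one has $1_{I_\nu}g_l=\tilde Q^\pm_{\prec k-2l}(1_{I_\nu}g_l)+\tilde Q^\pm_{\succeq k-2l}(1_{I_\nu}g_l)$, and the second piece does not go away --- it is not admissible in the cap-localized component and must be estimated in another component of $N_k^\pm$. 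In the paper this high-modulation output is placed into $\dot X^{\pm,-\frac12,1}$, and the essential input is the nontrivial bound $\|P_\ka g_l\|_{L^2_{t,x}}\ls 2^{\frac{k-2l}{2}}\|P_\ka g_l\|_{N^\pm[k,\ka]}$ (the $2d$ analogue of \cite[Prop.~4.2]{BH}); nothing in your argument accounts for this piece, while your treatment of the complementary low-modulation output (Minkowski over disjoint time slabs in the $L^1L^2$-type frame norms, plus boundedness of the projections) is essentially the paper's.

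For the $\dot X^{\pm,-\frac12,1}$ part, the claimed estimate $\|Q^\pm_{m'}(1_{I_\nu}a_m)\|_{L^2}\ls\min(1,(|I_\nu|2^{m'})^{-N})\|a_m\|_{L^2}$ is false for $N\geq 2$: a sharp cutoff satisfies only $|\widehat{1_{I_\nu}}(\tau)|\ls\min(|I_\nu|,|\tau|^{-1})$, i.e.\ first-order decay. With the true decay the redistributed high modulations $m'\succ m$ are still fine, but the case your two alternatives do not cover is the low-modulation output $Q^\pm_{\prec m}(1_{I_\nu}Q^\pm_m f_1)$ for long intervals $|I_\nu|>2^{-m}$: your $L^1_tL^2_x$ absorption via $\|1_{I_\nu}a_m\|_{L^1L^2}\leq|I_\nu|^{1/2}\|a_m\|_{L^2}$ only gives a uniform constant when $|I_\nu|\ls 2^{-m}$, and placing this output in $\dot X^{\pm,-\frac12,1}$ produces a divergent sum over the output modulations $m'\prec m$. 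Disjointness ``orthogonality in $t$'' alone does not rescue this; the paper's mechanism is to observe that $Q^\pm_{\prec m}(1_{J_\nu}Q^\pm_mf_1)=Q^\pm_{\prec m}\big((Q_{\sim m}1_{J_\nu})(Q^\pm_mf_1)\big)$, to use the endpoint-localized bound $|Q_{\sim m}1_{[a,b]}(t)|\ls_N(1+2^m|t-a|+2^m|t-b|)^{-N}$, and to exploit that these weights sum to $O(1)$ over the disjoint long intervals, so that the low-modulation outputs land in $L^1_tL^2_x$ with square-summable norms. This endpoint-localization argument (in the spirit of Sterbenz--Tataru) is the heart of the fungibility estimate and is missing from, and effectively replaced by an incorrect decay claim in, your proposal.
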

\begin{proof}
  We proceed similarly to \cite[pp.~176-178]{stta10}, the minor
  differences in the following proof are mostly due to the lack of
  scale invariance:

  It suffices to consider the $+$-case. It is obvious for
  $L^1_tL^2_x$-atoms, so we are left with
  $\dot{X}^{+,-\frac12,1}$-atoms and the cap-localized structure.

  a) $\dot{X}^{+,-\frac12,1}$-atoms: We will prove
  \begin{equation}\label{eq:xatoms}
    \sum_{\nu}\|1_{I_\nu}f_1\|^2_{L^1_tL^2_x+\dot{X}^{+,-\frac12,1}}\ls \|f_1\|_{\dot{X}^{+,-\frac12,1}}^2,
  \end{equation}
  for $\tilde{P}_kf_1=f_1$.  By definition, this follows from
  \begin{equation}\label{eq:xatoms-mod}
    \sum_{\nu}\|1_{I_\nu}Q_m f_1\|^2_{L^1_tL^2_x+\dot{X}^{+,-\frac12,1}}\ls 2^{-m}\|Q_mf_1\|_{L^2}^2,
  \end{equation}
  which we establish by proving
  \begin{align}
    \label{eq:xatoms-mod1}\sum_{\nu}\|Q_{\succeq m}(1_{I_\nu}Q_m f_1)\|^2_{L^2}\ls \|Q_mf_1\|_{L^2}^2,\\
    \label{eq:xatoms-mod2}\sum_{\nu}\|Q_{\prec m}(1_{I_\nu}Q_m
    f_1)\|^2_{L^1_tL^2_x}\ls 2^{-m}\|Q_mf_1\|_{L^2}^2.
  \end{align}
  The first one is trivial, since $Q_{\succeq m}$ is bounded in $L^2$,
  so we focus on \eqref{eq:xatoms-mod2}: Let $(J_\nu)$ be the
  subcollection of all intervals in $(I_\nu)$ satisfying
  $|J_\nu|>2^{-m}$ and $(K_\nu)$ all remaining intervals. For the
  short intervals $(K_\nu)$, we obtain
  \[
  \sum_{\nu}\|Q_{\prec m}(1_{K_\nu}Q_m f_1)\|^2_{L^1_tL^2_x}\ls \sum_{\nu}\|1_{K_\nu}Q_m f_1\|^2_{L^1_tL^2_x}
    \ls 2^{-m}\sum_{\nu}\|1_{K_\nu}Q_m f_1\|^2_{L^2}.
  \]
    Concerning the long intervals $(J_\nu)$, we have
  \[
  Q_{\prec m}(1_{J_\nu}Q_m f_1)=Q_{\prec m}((Q_{\sim m}1_{J_\nu})(Q_m
  f_1))
  \]
  and it is easily checked that
  \begin{equation*}
    |Q_{\sim m}1_{[a,b]}(t)|\ls_N \alpha_{[a,b],m}(t)^{-N}, \quad \alpha_{[a,b],m}(t):=1+2^m|t-a|+2^m|t-b|.
  \end{equation*}
  Let $J_\nu=[a_\nu, b_\nu]$. Because of their disjointness and
  $|J_\nu|>2^{-m}$, we have
  \[
  \sum_{\nu} \alpha_{[a_\nu,b_\nu],m}^{-N}(t)\ls \sum_\nu
  (1+2^m|t-a_\nu|+2^m|t-b_\nu|)^{-N}\ls 1 \quad (N>1).
  \]
  Fix $N=2$. We conclude that
  \begin{align*}
    &\sum_{\nu}\|Q_{\prec m}(1_{J_\nu}Q_m f_1)\|^2_{L^1_tL^2_x}\ls \sum_{\nu}\|(Q_{\sim m}1_{J_\nu})(Q_m f_1)\|_{L^1_tL^2_x}^2\\
    \ls{} & \sum_{\nu}\|\alpha_{[a_\nu,b_\nu],m}^{-1}\|_{L^2_t}^2\|\alpha_{[a_\nu,b_\nu],m}^{-1}Q_m f_1\|^2_{L^2_tL^2_x}\\
    \ls{} & 2^{-m} \sum_{\nu}\|\alpha_{[a_\nu,b_\nu],m}^{-1}(t)Q_m f_1\|^2_{L^2_tL^2_x}\\
    \ls{} & 2^{-m} \int_\R
    \sum_{\nu}\alpha_{[a_\nu,b_\nu],m}^{-2}(t)\|Q_m
    f_1(t)\|_{L^2_x}^2dt\ls 2^{-m}\|Q_m f_1\|_{L^2}^2.
  \end{align*}

  b) cap-localized structure: Consider $f_3=\sum_{1\leq l \leq
    k+10}g_l$ satisfying $\tilde{Q}^+_{\prec k-2l} \tilde{P}_k
  g_l=g_l$.  For fixed $1\leq l \leq k+10$, we write
  \[
  1_{\nu} g_l=\tilde{Q}^+_{\succeq k-2l}(1_{\nu}
  g_l)+\tilde{Q}^+_{\prec k-2l} (1_{I_\nu} g_l)
  \]
  By a similar argument as presented in \cite[Proof of Prop.\ 4.2,
  Part 1, Case c)]{BH} it follows that
  \begin{equation*}
    \| P_\ka g_l\|_{L^2_{t,x}}\ls 2^{\frac{k-2l}{2}} \|P_\ka g_l\|_{N^+[k,\ka]}.
  \end{equation*}

  For the first contribution, this implies
  \begin{align*}
    &\sum_{\nu} \|\tilde{Q}^+_{\succeq k-2l} (1_{I_\nu}
    g_l)\|^2_{\dot{X}^{+,-\frac12,1}}\ls 2^{2l-k} \sum_{\ka \in
      \mathcal{K}_l}\sum_{\nu} \|(1_{I_\nu}
    P_\ka g_l)\|^2_{L^2_{t,x}} \\
    &\ls 2^{2l-k} \sum_{\ka \in \mathcal{K}_l}\| P_\ka
    g_l\|^2_{L^2_{t,x}}\ls \sum_{\ka \in \mathcal{K}_l}\|P_\ka
    g_l\|_{N^+[k,\ka]}^2.
  \end{align*}
  For the second contribution we use Lemma 4.1 and the fact that
  \[
  \sum_{\nu}\|1_{I_\nu}h\|_{L^1_{y_1}L^2_{y_2}}^2 \ls
  \|h\|_{L^1_{y_1}L^2_{y_2}}^2
  \]
  for any orthogonal frame $(y_1,y_2)\in \R^{1+2}$ due to Minkowski's
  inequality to deduce that for fixed $\ka\in \mathcal{K}_l$ we have
  \[
  \sum_\nu\|\tilde{Q}^+_{\prec k-2l} (1_{I_\nu} P_\ka
  g_l)\|_{N^+[k,\ka]}^2\ls \sum_\nu\| (1_{I_\nu} P_\ka
  g_l)\|_{N^+[k,\ka]}^2\ls \|P_\ka g_l\|_{N^+[k,\ka]}^2,
  \]
  which we then sum up with respect to $\ka \in \mathcal{K}_l$.
  We obtain
  \begin{align*}
    &\Big(\sum_\nu \|1_{I_\nu} f_3\|_{N^{+,at}_k}^2\Big)^{\frac12}\ls
    \sum_{1\leq l \leq
      k-10}\Big\{\Big(\sum_\nu \|\tilde{Q}^+_{\succeq k-2l} (1_{I_\nu} g_l)\|_{\dot{X}^{+,-\frac12,1}}^2\Big)^{\frac12}\\
    &+\Big(\sum_\nu\sum_{\ka \in \mathcal{K}_l}\|\tilde{Q}^+_{\prec
      k-2l}
    (1_{I_\nu} P_\ka g_l )\|_{N^+[k,\ka]}^2\Big)^{\frac12}\Big\}\\
    &\ls \sum_{1\leq l \leq k+10}\Big(\sum_{\ka \in
      \mathcal{K}_l}\|P_\ka g_l\|_{N^+[k,\ka]}^2\Big)^{\frac12},
  \end{align*}
  and the proof is complete.
\end{proof}

Let $\psi$ be any fixed Schwartz function and
$\psi_T(\cdot)=\psi(\frac{\cdot}{T})$.
\begin{lem}\label{lem:time-loc}
  Fix any $1\leq p\leq 2$. For all $T>0$ we have
  \begin{equation*}
    \sup_{m\in Z} 2^m \|Q_m(\psi_T P_k f)\|_{L^p_t L^2_x}\ls \sup_{m\in \Z}
    2^m \|Q_mP_k f\|_{L^p_t L^2_x}+T^{\frac{1}{p}-1}\|P_k f\|_{L^\infty_t L^2_x}.
  \end{equation*}
  Consequently, there exists $c>0$ such that for any closed interval
  $I\subset (-2^{r-1},2^{r-1})$, we have
  \begin{equation}\label{eq:time-loc-free}
    \|e^{\pm it\la D\ra}\phi\|_{S^{\pm,\sigma}(I)}\leq c \|\phi\|_{H^\sigma(\R^2)}.
  \end{equation}
\end{lem}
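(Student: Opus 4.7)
The plan has two steps. First, I prove the multiplier estimate by a modulation decomposition. Then I use it, combined with the linear theory of Sections~\ref{sect:le}--\ref{Energy}, to bound each component of $\|F\|_{S^{\pm,\sigma}}$ where $F$ is a suitable extension of $(e^{\pm it\la D\ra}\phi)|_I$ to $\R$.

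\textbf{Part 1: The multiplier estimate.} Decompose $P_k f = \sum_{m'\in\Z}Q_{m'}^\pm P_k f$ and write
\[
Q_m^\pm(\psi_T P_k f) = \sum_{m'\in\Z} Q_m^\pm(\psi_T Q_{m'}^\pm P_k f).
\]
I distinguish three regimes. If $2^m\ls T^{-1}$, I drop $Q_m^\pm$ and apply H\"older in time:
\[
2^m\|Q_m^\pm(\psi_T P_k f)\|_{L^p_t L^2_x}\ls T^{-1}\|\psi_T P_k f\|_{L^p_t L^2_x}\ls T^{1/p-1}\|P_k f\|_{L^\infty_t L^2_x},
\]
which accounts for the second term of the RHS. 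If $2^m\gg T^{-1}$ and $|m-m'|\leq c$, a trivial $L^\infty$-bound on $\psi_T$ yields $2^m\|Q_m^\pm(\psi_T Q_{m'}^\pm P_k f)\|_{L^p L^2}\ls \sup_{m''}2^{m''}\|Q_{m''}^\pm P_k f\|_{L^p L^2}$. Finally, for $|m-m'|>c$, on the support of the $\tau$-convolution defining $\widehat{\psi_T}*_\tau\widehat{Q_{m'}^\pm P_k f}$ the argument of $\widehat{\psi_T}(\tau-\sigma)=T\hat\psi(T(\tau-\sigma))$ has size $\sim 2^{\max(m,m')}\gg T^{-1}$, so $|\widehat{\psi_T}(\tau-\sigma)|\ls_N T(T\cdot 2^{\max(m,m')})^{-N}$; this produces a summable geometric tail and reduces back to the same supremum.

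\textbf{Part 2: The free-solution bound.} Fix a smooth even cutoff $\chi$ with $\chi\equiv 1$ on $[-1,1]$ and $\supp\chi\subset (-2,2)$, set $T:=2^{r-1}$, and define $F(t):=\chi(t/T)e^{\pm it\la D\ra}\phi$, so that $F|_I = (e^{\pm it\la D\ra}\phi)|_I$. I bound each piece of $\|F\|_{S^{\pm,\sigma}}$. The $V^2_{\pm\la D\ra}$ piece uses $e^{\mp it\la D\ra}F = \chi(t/T)\phi$, so telescoping differences are controlled by the total variation of $\chi(\cdot/T)$, which is $O(1)$ uniformly in $T$. The $END^\pm_k$ and low-frequency Strichartz pieces follow from Theorem~\ref{thm:estr} applied on $[-2T,2T]\subset(-2^r,2^r)$, together with $\|\chi(\cdot/T)\|_{L^\infty}\leq 1$. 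The modulation piece is handled via Part~1 with $g:=e^{\pm it\la D\ra}\phi$: since $g$ is supported on the characteristic surface $\tau\mp\la\xi\ra=0$, $Q_m^\pm g\equiv 0$ for every $m\in\Z$, so only the $T^{1/p-1}\|P_k\phi\|_{L^2}$ term survives, giving $2^{(1/p-1)k}\cdot T^{1/p-1}\|P_k\phi\|_{L^2}=(T\cdot 2^k)^{1/p-1}\|P_k\phi\|_{L^2}\leq \|P_k\phi\|_{L^2}$ since $T\cdot 2^k\geq 1$ and $p\geq 1$. The $S^\pm[k,\ka]$-pieces follow from Theorem~\ref{thm:Energy} (estimates \eqref{DH} and \eqref{DH2}) applied to $P_\ka\phi$ in the appropriate frames, after absorbing $\chi(\cdot/T)$ as a bounded multiplier on $L^\infty_{t_\Th}L^2_{x_\Th}$ and $L^\infty_{x^2_\Th}L^2_{(t,x^1)_\Th}$ and $Q_{\prec k-2l}^\pm$ via Lemma~\ref{stable}~i) (or a direct kernel estimate); square-summing over $\ka\in\mathcal{K}_l$ by orthogonality and taking the supremum over $1\leq l\leq k+10$ yields the bound by $\|P_k\phi\|_{L^2}$. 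Dyadic summation in $k$ against $2^{2k\sigma}$ completes the proof.

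\textbf{Expected main difficulty.} The most delicate point is the bookkeeping in Part~1 in the case $|m-m'|>c$, where the rapid decay of $\widehat{\psi_T}$ at scale $T^{-1}$ must beat the $2^m$-prefactor after summation over $m'$. The structural observation that makes Part~2 clean is that the modulation norm of a free solution is identically zero, so the only contribution to the modulation norm of $F$ is the cutoff-induced term $T^{1/p-1}\|P_k\phi\|_{L^2}$; this is absorbed by the prefactor $2^{(1/p-1)k}$ precisely because $p\geq 1$.
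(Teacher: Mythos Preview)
Your overall strategy matches the paper's: the same extension $F=\chi(t/T)e^{\pm it\la D\ra}\phi$ with $T=2^{r-1}$, the same ingredients (Theorem~\ref{thm:estr} for the Strichartz/END pieces, Theorem~\ref{thm:Energy} for the frame-energy pieces, boundedness of smooth time cutoffs on $V^2$), and Part~1 for the modulation component. Your Part~2 is in fact more explicit than the paper's one-line justification and is correct as written.

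There is, however, a real slip in Part~1. In your regime $|m-m'|>c$ you assert that the decay of $\widehat{\psi_T}$ ``produces a summable geometric tail and reduces back to the same supremum.'' This is true when $m'>m+c$ (insert the localization $\psi_T\to Q^0_{\sim m'}\psi_T$, use $\|Q^0_{\sim m'}\psi_T\|_{L^\infty_t}\ls 1$, and sum the factor $2^{m-m'}$). But when $m'<m-c$ the tail is \emph{not} summable back to the supremum: using $\|Q^0_{\sim m}\psi_T\|_{L^\infty_t}$ leaves you with $2^{m-m'}$, which diverges as $m'\to-\infty$. The correct move in this subcase is to first resum $\sum_{m'<m-c}Q^\pm_{m'}P_kf=Q^\pm_{<m-c}P_kf$, then place $Q^0_{\sim m}\psi_T$ in $L^p_t$ (where $\|Q^0_{\sim m}\psi_T\|_{L^p_t}\ls T^{1/p}\la T2^m\ra^{-1}$) and $P_kf$ in $L^\infty_tL^2_x$; since you are in the regime $2^m\gg T^{-1}$ this yields precisely the $T^{1/p-1}\|P_kf\|_{L^\infty_tL^2_x}$ term, not the supremum term. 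So both terms on the right-hand side arise in the regime $2^m\gg T^{-1}$, not only the supremum.

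The paper avoids this bookkeeping by decomposing the cutoff rather than the function: it writes
\[
Q_m(\psi_T f)=Q_m[(Q_{\ll m}\psi_T)f]+Q_m[(Q_{\sim m}\psi_T)f]+Q_m[(Q_{\gg m}\psi_T)f],
\]
so that the three pieces are manifestly handled by (i) forcing $f\to Q_{\sim m}f$ and using $\|Q_{\ll m}\psi_T\|_{L^\infty_t}\ls 1$, (ii) $\|Q_{\sim m}\psi_T\|_{L^p_t}\ls T^{1/p}\la T2^m\ra^{-1}$ together with $\|f\|_{L^\infty_tL^2_x}$, and (iii) a genuine geometric sum $\sum_{m_1\gg m}2^{m-m_1}$ against the supremum. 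This is the cleaner organization; your decomposition of $f$ by modulation works too once the $m'<m-c$ case is handled as above.
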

\begin{proof}
  Let $f=\tilde{P}_k f$. Obviously, $ \|Q_{\ls m} \psi_T\|_{L^\infty_t}\ls 1$
  and $[Q_m\psi_T](t)=[Q_{T 2^m}\psi](\frac{t}{T})$, hence \[\|Q_{m}
  \psi_T\|_{L^q_t}\ls_N T^{\frac{1}{q}}\la T2^{m}\ra^{-N} \text{ for
    any } N\in \N.
  \]
  We split
  \[
  Q_m(\psi_T f)=Q_m[Q_{\ll m}(\psi_T) f]+Q_m[Q_{\sim m}(\psi_T)
  f]+Q_m[Q_{\gg m}(\psi_T) f].
  \]
  First,
  \[
  2^m \|Q_m[Q_{\ll m}(\psi_T) f]\|_{L^p_t L^2_x}\ls \|Q_{\ll
    m}(\psi_T)\|_{L^\infty_t} 2^m \|Q_m f\|_{L^p_t L^2_x}.
  \]
  Second,
  \begin{align*}
    &2^m \|Q_m[Q_{\sim m}(\psi_T) f]\|_{L^p_t L^2_x}\ls \|Q_{\sim
      m}(\psi_T)\|_{L^p_t} 2^m \| f\|_{L^\infty_t L^2_x}\\
    & \ls T^{\frac{1}{p}}\la T 2^m\ra^{-1} 2^m \|f\|_{L^\infty_t
      L^2_x}\ls T^{\frac{1}{p}-1}\|f\|_{L^\infty_t L^2_x}.
  \end{align*}
  Third,
  \begin{align*}
    2^m \|Q_m[Q_{\gg m}(\psi_T) f]\|_{L^p_t L^2_x}&\ls 2^m \sum_{m_1\gg m} \|Q_{m_1}(\psi_T)Q_{m_1} f\|_{L^p_t L^2_x}\\
    &\ls 2^m \sum_{m_1\gg m} \|Q_{m_1}(\psi_T)\|_{L^\infty_t}
    \|Q_{m_1}
    f\|_{L^p_t L^2_x}\\
    & \ls \sum_{m_1\gg m} 2^{m-m_1} \sup_{m_1}2^{m_1}\|Q_{m_1}
    f\|_{L^p_t L^2_x}.
  \end{align*}
  Concerning the second claim, we define the extension $F=\psi_T
  e^{\pm it\la D\ra}\phi$, where we choose $\psi$ to be equal to $1$
  on $(-1,1)$, to be supported in $(-2,2)$ and $\psi_T$ defined as
  above with $T=2^{r-1}$. The estimate follows from the first claim,
  the results from Section \ref{sect:le} and the fact that
  multiplication with smooth cutoffs is a bounded operation in $V^2$.
\end{proof}
\begin{pro}\label{pro:lin}
  i) For all $g\in N_k^\pm$ and initial data $u_0\in L^2(\R^2)$, both
  localized at (spatial) frequency $2^k$ (in the sense that $\tilde
  P_k g = g, \tilde P_k u_0 =u_0$), $k \geq 100$, the solution $u$ of
  \begin{equation} \label{ng} (i \partial_t \pm \la D \ra) u = g,
    \quad u(0)=u_0,
  \end{equation}
  satisfies $\psi_T u \in S_k^\pm$ for all $1\ls T\ls 2^r$, and
  \begin{equation} \label{eq:sk-bound}\|\psi_T u\|_{S_k^\pm} \ls \| g
    \|_{N_k^\pm} + \| u_0 \|_{L^2}.
  \end{equation}
  
  ii) A similar statement holds true for $90 \leq k \leq 99$. For all
  $g\in N_{\leq 89}^\pm$ and initial data $u_0\in L^2(\R^2)$, both
  localized at (spatial) frequency $\leq 2^{89}$ (in the sense that
  $\tilde P_{\leq 89} g = g, \tilde P_{\leq 89} u_0 =u_0$), the
  solution $u$ of \eqref{ng} satisfies $\psi_T u \in S_{\leq 89}^\pm$
  for all $1\ls T\ls 2^r$, and
  \begin{equation} \label{eq:sk-bound-low}\|\psi_T u\|_{S_{\leq
        89}^\pm} \ls \| g \|_{N_{\leq 89}^\pm} + \| u_0 \|_{L^2}.
  \end{equation}

\end{pro}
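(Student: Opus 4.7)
The plan is to proceed by linearity, splitting $u$ into the free evolution $u_{hom}(t) = e^{\mp it \la D \ra} u_0$ and the Duhamel contribution $u_{inh}$ solving \eqref{ng} with zero data, and to handle each piece separately. For $u_{hom}$, the bound $\|\psi_T u_{hom}\|_{S_k^\pm} \ls \|u_0\|_{L^2}$ follows from Lemma \ref{lem:time-loc} combined with the frequency-localized linear theory of Section \ref{sect:le}: the $V^2_{\pm \la D \ra}$-bound is the classical identity for free waves, stable under multiplication by smooth compactly supported cutoffs; the $L^p_t L^2_x$ component after frequency truncation is controlled as in Lemma \ref{lem:time-loc}; the $END^\pm_k$ component is delivered by Theorem \ref{thm:estr}; and the cap-localized $S^\pm[k,\ka]$ components are supplied by \eqref{DH} and \eqref{DH2} of Theorem \ref{thm:Energy}. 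The square summation over $\ka \in \Ka_l$ is respected thanks to Remark \ref{rmk:cap-sum}.

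For $u_{inh}$, I would write $g = g_{at} + g_p$, where $g_{at}$ admits a near-optimal atomic decomposition in $N_k^{\pm,at}$ and $g_p$ carries the $2^{(1-1/p)k}L^p_t L^2_x$-content, both controlled by $\|g\|_{N_k^\pm}$. The contribution of $g_p$ to each component of $S_k^\pm$ is handled by Duhamel's formula combined with a Christ--Kiselev type argument (made available by $p < 2$), which upgrades each free-wave bound of Section \ref{sect:le} to its inhomogeneous counterpart. By design, each atom type in $N_k^{\pm,at}$ is matched to a component of $S_k^\pm$: $L^1_t L^2_x$- and $\dot X^{\pm,-\frac12,1}$-atoms yield the $V^2_{\pm \la D \ra}$-bound by the standard energy argument and the Strichartz and endpoint bounds by Minkowski combined with Theorem \ref{thm:estr}; Lemma \ref{lem:fung} is invoked to preserve the square summation over $\ka \in \Ka_k$ in the $V^2$ component. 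For the cap-localized atoms of types A1, A2, A3, the normalization factors $2^l$ and $2^{l/2}$ in their definitions are calibrated precisely to the weights $\tilde\alpha^{-1}$ and $\alpha^{-1/2}$ in the inhomogeneous energy bounds \eqref{DIH} and \eqref{DIH2} of Theorem \ref{thm:Energy}, which immediately yields the matching $S^\pm[k,\ka]$ bound at the scale $(j,l)$ distinguished by the atom.

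To control the remaining components of $S_k^\pm$ for an A1/A2/A3 atom $a$, I would invoke Theorem \ref{thm:Energy} iii), which represents the Duhamel solution as $u_a(t) = e^{\mp it \la D \ra} \tilde v_0 + \int_{-\infty}^\infty (e^{\mp it \la D \ra} v_s)\,\chi_{t_\Theta > s}\,ds$, where $\tilde v_0$ and the family $\{v_s\}$ are frequency-localized to $\tilde A_{k,\ka}$ with total $L^2$-mass $\ls 1$. Applying the free-wave estimates of Theorem \ref{thm:estr} and parts i) of Theorem \ref{thm:Energy} to each term and integrating in $s$ via Minkowski yields the required bounds on the $V^2_{\pm \la D \ra}$, $END^\pm_k$, and the remaining $S^\pm[k,\ka']$ components. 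The time cutoff $\psi_T$ is inserted via Lemma \ref{lem:time-loc}, using that multiplication by smooth compactly supported functions is bounded on $V^2$ and that $\psi_T$ commutes up to tame errors with the relevant modulation projections.

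The main obstacle will be matching cap scales: when an atom is angularly localized at scale $2^{-l}$ but the target component is measured at a different scale $2^{-l'}$, one must exploit geometric orthogonality (via Lemma \ref{stable}) together with modulation localization in order to avoid logarithmic losses and to respect the $\ell^2$-cap summation identified in \eqref{l2S}; the cases $l' \ll l$, $|l - l'| \leq 10$, and $l' \gg l$ have to be distinguished and handled with the appropriate orthogonality. The low-frequency case ii) is treated by the same scheme, with the multiscale cap-localized structure collapsed to the single endpoint Strichartz component, and is strictly easier.
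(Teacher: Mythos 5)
The main gap is in your treatment of the $V^2_{\pm \la D \ra}$ component for the cap-localized atoms A1--A3. You propose to get it from the representation of Theorem \ref{thm:Energy} iii) by "applying the free-wave estimates to each term and integrating in $s$ via Minkowski." But the representation writes the Duhamel term as a superposition of free waves truncated by $\chi_{t_\Theta>s}$, and since $t_\Theta$ depends on $x$ this is a sharp cutoff along a tilted hyperplane, not a time truncation: nothing in the paper (nor any standard fact about $V^2$) asserts that multiplication by such a characteristic function is bounded on $V^2_{\pm\la D\ra}$ with uniform constants, so the free-wave $V^2$ bound for $e^{\mp it\la D\ra}v_s$ does not transfer to $\chi_{t_\Theta>s}\,e^{\mp it\la D\ra}v_s$. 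The paper's proof deliberately avoids this: the $V^2$ component is obtained from the interval-wise $L^\infty_tL^2_x$ bound $\|P_\ka u(b)-e^{\pm i(b-a)\la D\ra}P_\ka u(a)\|_{L^2}\ls \|1_{[a,b]}P_\ka g\|_{N_k^\pm}$, summed over an arbitrary partition by means of the fungibility estimate \eqref{eq:fung} of Lemma \ref{lem:fung}, and then the cap square-summation \eqref{capf}, whose nontrivial part (the cap-localized atoms measured in $L^1_{t_\Theta}L^2_{x_\Theta}$) rests on the almost orthogonality of the operators $P_\ka \tilde Q^\pm_{\prec k-2l'}\tilde P_{\ka'}$ acting in $L^2_{x_\Theta}$, a transversality statement -- not on Lemma \ref{stable}, which only gives boundedness of modulation cutoffs. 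In your outline Lemma \ref{lem:fung} is invoked only for the $L^1_tL^2_x$ and $\dot X^{\pm,-\frac12,1}$ atoms and "to preserve the square summation over $\ka\in\Ka_k$," which is not what it does (it concerns time-interval decompositions); the decisive application -- converting $L^\infty_tL^2_x$ control of the Duhamel term for A1--A3 atoms into $V^2$ control -- is absent, and without it the $V^2$ part of \eqref{eq:sk-bound} is not proved.

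A secondary issue is the time cutoff: you assert that $\psi_T$ "commutes up to tame errors with the relevant modulation projections," whereas the paper explicitly flags that $\psi_T$ does not interact well with the modulation localizations inside $S^\pm[k,\ka]$ and sidesteps any commutation by noting that $\psi_T u$ solves $(i\partial_t\pm\la D\ra)(\psi_T u)=\psi_T g+i\psi_T' u$ with zero data, bounding $\|\psi_T' u\|_{L^1_tL^2_x}\ls\|u\|_{L^\infty_tL^2_x}$ and $\|\psi_T g\|_{N_k^\pm}\ls\|g\|_{N_k^\pm}$ (the latter from the proof of Lemma \ref{lem:fung}), and re-applying the untruncated estimate; note also that the cutoff is genuinely needed for the $END_k^\pm$ component, since Theorem \ref{thm:estr} only controls time-truncated waves. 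Apart from these two points your architecture -- reduction to $u_0=0$ via Lemma \ref{lem:time-loc}, matching the A1--A3 normalizations to the weights in \eqref{DIH}--\eqref{DIH2}, and using Theorem \ref{thm:Energy} iii) together with Theorem \ref{thm:estr} for the remaining frame and endpoint components -- agrees with the paper's proof, which outsources exactly those steps to the three-dimensional argument of \cite{BH}.
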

\begin{proof}
  i) It suffices to consider the $+$ case. Due to Lemma
  \ref{lem:time-loc} it suffices to consider $u_0=0$.
  Our first claim is that we have the following estimate:
  \begin{equation} \label{help1} \| u\|_{S_k^+ \setminus END_k^+} +
    \|\psi_T u\|_{END_k^+} \ls \| g \|_{N_k^\pm}
  \end{equation}
  where $S_k^+ \setminus END_k^+$ contains all norm components of
  $S_k^\pm$ except the $END_k^+$ one.  The time cut-off in is needed
  to recoup the $END_k^+$ structure.
  Besides the $V^2_{\la D \ra}$ component, the proof of \eqref{help1}
  is analogous to the $3d$ case in \cite[Prop.~4.2]{BH}, which, in
  particular, implies the $L^\infty_t L^2_x$-bound. In what follows we
  provide the estimate for the $V^2_{\la D \ra}$ part of
  \eqref{help1}.

  First, we follow the general strategy of \cite[Prop.~5.4 and Lemma
  5.8]{stta10} to prove the $V^2_{\la D \ra}$-estimate on a fixed cap
  $\kappa \in \mathcal{K}_l$ with $l := k+10$:

  For any interval $[a,b]$ the function
  \[
  w_\ka(t)=P_\ka u(t)-e^{i(t-a)\la D \ra} P_\ka u(a)
  \]
  solves
  \[
  (i \partial_t \pm \la D \ra) w_\ka = P_\ka g ,\quad w_\ka (a)=0,
  \]
  hence we obtain, using the $L^\infty_t L^2_x$-bound,
  \[
  \|P_\ka u(b)-e^{i(b-a)\la D\ra }P_\ka
  u(a)\|_{L^2_x}\ls\|1_{[a,b]}P_\ka g\|_{N_k^+}.
  \]
  For any $(t_\nu)\in \mathcal{Z}$, using \eqref{eq:fung}, we conclude
  \begin{align*}
    \sum_\nu \|e^{-it_{\nu+1} \la D \ra }P_\ka u(t_{\nu+1})-e^{-it_\nu
      \la D \ra }P_\ka u(t_{\nu})\|_{L^2}^2&\ls
    \sum_\nu \|1_{[t_{\nu},t_{\nu+1}]}P_\ka g\|^2_{N_k^+}\\
    &\ls \|P_\ka g\|_{N_k^+}^2,
  \end{align*}
  and finally we take the supremum over $\mathcal{Z}$.

  Second, we sum up the squares: By the estimate above,
  \[
  \Big( \sum_{\ka \in \mathcal{K}_l}\|P_\ka u\|_{V^2_{\la D \ra
    }}^2\Big )^{\frac12}\ls \Big( \sum_{\ka \in \mathcal{K}_l}\|P_\ka
  g\|_{N_k^+}^2\Big )^{\frac12},
  \]
  hence it remains to prove
  \begin{equation}\label{capf}
    \Big( \sum_{\ka \in \mathcal{K}_l}\|P_\ka g\|_{N_k^+}^2\Big
    )^{\frac12}\ls \|g\|_{N_k^+},
  \end{equation}
  uniformly in $1\leq l \leq k+10$. By Minkowski's inequality, this is
  obviously true for the $L^{p}_tL^2_x$-part of the $N_k^+$-norm, and
  also for the $\dot{X}^{+,-\frac12,1}$ and $L^1_tL^2_x$-atoms in
  $N^{+,at}_k$, so it remains to prove it for the cap-localized
  structure. We observe that
  \begin{align*}
    &\Big( \sum_{\ka \in \mathcal{K}_{k+10}}\Big(\sum_{1\leq l' \leq
      k+10}\Big(\sum_{\kappa'\in \mathcal{K}_{l'}}\|P_{\ka'} P_\ka
    g\|_{N^+[k,\ka']}^2\Big)^{\frac12}\Big)^2\Big
    )^{\frac12}\\
    &\ls \sum_{1\leq l' \leq k+10}\Big(\sum_{\kappa'\in
      \mathcal{K}_{l'}} \sum_{\ka \in \mathcal{K}_{k+10}}\|P_{\ka'}
    P_\ka g\|_{N^+[k,\ka']}^2\Big)^{\frac12}.
  \end{align*}

  We now argue why \eqref{capf} holds for the case when $g$ is an atom
  in the cap localized structure.  The only non-trivial case is when
  $g_\Theta= \tilde{Q}_{\prec k-2l'} \tilde{P}_{\ka'} g_\Theta$ where
  $\ka' \in \Ka_{l'}$ and $l' \leq k+10$, while the information we
  have is control on $\| g_\Theta \|_{L^1_{t_\Th}L^2_{x_\Th}}$ or $\|
  g_\Theta \|_{L^1_{x^2_\Th}L^2_{(t,x^1)_\Th}}$ as described in A1 -
  A3 prior to the definition \eqref{eq:n-atom}. Without restricting
  the generality of the argument, consider we have control of the
  first type. The key observation is that the operators $P_\ka
  \tilde{Q}_{\prec k-2l'} \tilde{P}_{\ka'}$ are almost orthogonal with
  respect to $\ka \in \Ka_l$ when acting on $L^2_{x_\Th}$. One way to
  formalize this is through the identity $P_\ka \tilde{Q}_{\prec
    k-2l'} \tilde{P}_{\ka'} = \tilde P(\ka, \xi_\Th) P_\ka
  \tilde{Q}_{\prec k-2l'} \tilde{P}_{\ka'}$ where $\tilde P(\ka,
  \xi_\Th)$ are operators localizing the Fourier variable $\xi_\Th$ in
  almost disjoint cap-type regions. This is a consequence of the
  transversality between the direction $\Th$ and the Fourier support
  of $\tilde{Q}_{\prec k-2l'} \tilde{P}_{\ka'} $.

  Taking advantage of this almost orthogonality, we obtain
  \[
  \sum_{\ka \in \Ka_{k+10}} \|P_\ka g_\Th
  \|^2_{L^1_{t_\Th}L^2_{x_\Th}} \ls \| g_\Th
  \|^2_{L^1_{t_\Th}L^2_{x_\Th}},
  \]
  and this finishes the proof of \eqref{help1}.
  
  Next we show how we derive \eqref{eq:sk-bound} using
  \eqref{help1}. The problem encountered by a direct argument is that
  $\psi_T$ does not commute well with the modulation localizations
  present in the $S^+[k,\kappa]$. $\psi_T u$ solves the following
  equation:
  \begin{equation} \label{help3} (i \partial_t \pm \la D \ra) (\psi_T
    u) = \psi_T g + i \psi_T' u.
  \end{equation}
  with the initial data $\psi_T u(0)=u(0)=0$. Since we have
  \[
  \| i \psi_T' u \|_{L^1_t L^2_x} \ls \| u \|_{L^\infty_t L^2_x} \ls
  \| u \|_{S_k^+} \ls \| g \|_{N_k}
  \]
  and from the proof of Lemma \ref{lem:fung} we easily obtain
  \begin{equation}
    \| \psi_T g  \|_{N_k^+} \ls \| g  \|_{N_k^+}.
  \end{equation}
  We can invoke again \eqref{help1}, this time for the equation
  \eqref{help3}, to obtain
  \[
  \| \psi_T u \|_{S_k^+ \setminus END_k^+} \ls \| g \|_{N_k^+}.
  \]
  This concludes the proof of \eqref{eq:sk-bound}.
 
  ii) The proof of part ii) can be carried over in a similar but
  simpler way, except for the case when $g \in L^\frac43_{t,x}$. A
  complete argument, including the $L^\frac43_{t,x}$ part, can be
  found in \cite[Proposition 7.2]{bikt}.
\end{proof}

\begin{cor} \label{cor:lin} For any $r \in \N$, closed intervals
  $I\subset (-2^{r-1},2^{r-1})$, all $u_0 \in H^\sigma(\R^2)$ and $g
  \in N^{\pm,\sigma}$, there exists a unique solution $u\in
  S^{\pm,\sigma}(I)$ of \eqref{ng}, and the following estimate holds
  true
  \begin{equation} \label{eq:lin} \| u \|_{S^{\pm,\sigma}(I)} \ls \| g
    \|_{N^{\pm,\sigma}(I)} + \| u_0 \|_{H^\sigma}.
  \end{equation}
\end{cor}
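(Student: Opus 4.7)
The plan is to reduce to Proposition \ref{pro:lin} by frequency localization, extension, and summation, exploiting the fact that $S^{\pm,\sigma}$ and $N^{\pm,\sigma}$ are defined as weighted $\ell^2$-sums of the dyadic pieces $S^\pm_k$ and $N^\pm_k$. Since the hard analytic work (capturing the endpoint Strichartz estimate in adapted frames and controlling the atomic spaces) has already been done in the proof of Proposition \ref{pro:lin}, the remaining steps are essentially assembly and bookkeeping.

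First, by the definition of the restriction space $N^{\pm,\sigma}(I)$, I choose an extension $G \in N^{\pm,\sigma}$ of $g$ with $\|G\|_{N^{\pm,\sigma}} \leq 2\|g\|_{N^{\pm,\sigma}(I)}$, and let $U$ be the global Duhamel solution on $\R$ of $(i\partial_t \pm \la D\ra)U = G$ with $U(0) = u_0$ (translating in time if $0\notin I$; this costs only an additional bounded factor for the $T$-ranges at hand). Applying $P_k$ to the equation, $P_kU$ solves the frequency-localized problem with forcing $P_kG$ and initial data $P_ku_0$. Fix the cutoff $\psi$ from Lemma \ref{lem:time-loc} and set $T = 2^{r-1}$, so that $\psi_T\equiv 1$ on $(-2^{r-1},2^{r-1})\supset I$. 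Proposition \ref{pro:lin}(i) then yields, for all $k\geq 100$,
\[
\|\psi_T P_k U\|_{S_k^\pm} \ls \|P_k G\|_{N_k^\pm} + \|P_k u_0\|_{L^2_x},
\]
together with the analogous low-frequency estimate from part (ii) for $P_{\leq 89}U$.

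Next I multiply by $2^{2k\sigma}$, square-sum over $k\geq 90$, and add the low-frequency contribution. Invoking the definitions of $\|\cdot\|_{S^{\pm,\sigma}}$ and $\|\cdot\|_{N^{\pm,\sigma}}$, this gives
\[
\|\psi_T U\|_{S^{\pm,\sigma}} \ls \|G\|_{N^{\pm,\sigma}} + \|u_0\|_{H^\sigma} \ls \|g\|_{N^{\pm,\sigma}(I)} + \|u_0\|_{H^\sigma}.
\]
Setting $u := U|_I$, the function $\psi_T U$ is an extension of $u$ to $\R$ lying in $S^{\pm,\sigma}$, so the infimum definition of $S^{\pm,\sigma}(I)$ yields \eqref{eq:lin}. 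Continuity $u\in C(I;H^\sigma)$ follows from the embedding $V^2_{\pm\la D\ra} \hookrightarrow C(\R;L^2_x)$ applied to each frequency piece together with the square-summability just established. For uniqueness, if $u_1,u_2 \in S^{\pm,\sigma}(I)$ are two solutions, then $w := u_1 - u_2$ solves the homogeneous equation on $I$ with $w(0)=0$; by the conservation of the $L^2_x$-norm of each $P_k w$ under the free flow, together with the $\ell^2$-summation in $k$, we conclude $w\equiv 0$ on $I$.

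The only point requiring care is the interaction between the cutoff $\psi_T$ and the modulation-localized atomic components of $S_k^\pm$: $\psi_T$ does not commute with the modulation projections, and the standard trick of solving $(i\partial_t\pm\la D\ra)(\psi_T U) = \psi_T G + i\psi_T' U$ introduces the extra term $\psi_T'U$ which must be absorbed via $\|\psi_T' U\|_{L^1_tL^2_x}\ls \|U\|_{L^\infty_tL^2_x}$. This is exactly what is arranged in the proof of Proposition \ref{pro:lin}, so invoking that proposition is legitimate and no further delicate analysis is needed at the level of the corollary.
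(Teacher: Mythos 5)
Your proposal is correct and follows essentially the same route as the paper, whose proof of Corollary \ref{cor:lin} consists precisely of the one-line reduction to frequency-localized pieces handled by Proposition \ref{pro:lin}. Your additional bookkeeping (choice of extension, the cutoff $\psi_T$ with $T=2^{r-1}$, square-summation in $k$, and the standard continuity/uniqueness remarks) is exactly the content the paper leaves implicit in the phrase ``by definition of the spaces.''
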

\begin{proof}
  By definition of the spaces, it suffices to prove this for frequency
  localized functions which is provided by Proposition \ref{pro:lin}
  above.
\end{proof}

Now, we conclude that we can control all non-endpoint Strichartz norms
in our spaces, see also \cite{Kr03,Kr04,tao,KS12} for other Strichartz
type bounds. We refine the argument from \cite{stta10} in the sense
that we include additional cap-localizations which give stronger
bounds.

\begin{cor}\label{cor:full-str}
  Let $p,q\geq 2$ such that $(p,q)$ is a Schr\"odinger-admissible
  pair, i.e.\
  \[(p,q)\ne (2,\infty), \; \frac{1}{p}+\frac{1}{q}= \frac12,
  \text{and }s=1-\frac{2}{q}\] or a wave admissible pair, i.e.\
  \[
  (p,q)\ne (4,\infty), \; \frac{2}{p}+\frac{1}{q}\leq \frac12,
  \text{and }s=1-\frac{2}{q}-\frac{1}{p}
  \]

  {\rm i)} Then, we have
  \begin{equation}\label{eq:full-str}
    \|P_k u\|_{L^p_t(\R;L^q_x(\R^2))} \ls 2^{k s} \|P_ku\|_{S^\pm_k}.
  \end{equation}

  {\rm ii)} Moreover, we have
  \begin{equation}\label{eq:full-str-loc}
    \sup_{1\leq l \leq k+10} \Big(\sum_{\kappa \in \mathcal{K}_l} \|P_k P_\kappa u\|_{L^p_t(\R;L^q_x(\R^2))}^2 \Big)^{\frac{1}{2}} \ls 2^{ks} \|P_ku\|_{S^\pm_k}.
  \end{equation}
\end{cor}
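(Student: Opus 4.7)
The plan is to reduce the estimate to a free-solution Strichartz estimate for the Klein-Gordon propagator and then transfer it to the $V^2_{\pm\la D\ra}$-based setting, which is one of the building blocks of $S^\pm_k$. For any admissible pair $(p,q)$ in the stated ranges and $f$ spectrally localized to $|\xi|\approx 2^k$, I would first derive
\begin{equation*}
\|e^{\pm it\la D\ra}f\|_{L^p_tL^q_x}\ls 2^{ks}\|f\|_{L^2}
\end{equation*}
by a standard $TT^\ast$ argument from the dispersive bound \eqref{eq:bigk}. Interpolating the $L^\infty_{t,x}$-decay with the energy bound $\|e^{\pm it\la D\ra}f\|_{L^\infty_tL^2_x}\ls \|f\|_{L^2}$ yields all non-endpoint Schrödinger-admissible pairs with $s=1-2/q$, from the penalized Schrödinger-like decay $2^{2k}(2^k|t|)^{-1}$, and all non-endpoint wave-admissible pairs with $s=1-1/p-2/q$, from the wave-like decay $2^{3k/2}|t|^{-1/2}$ in \eqref{eq:bigk}.

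Since the excluded endpoints $(2,\infty)$ and $(4,\infty)$ force $p>2$ throughout the remaining ranges, the standard embedding $V^2_{\pm\la D\ra}\hookrightarrow U^p_{\pm\la D\ra}$ (see \cite{hhk09}) transfers the linear bound to any frequency-localized $u$:
\begin{equation*}
\|u\|_{L^p_tL^q_x}\ls 2^{ks}\|u\|_{V^2_{\pm\la D\ra}},
\end{equation*}
and analogously to each cap-localized piece $P_\ka P_k u$, since $P_\ka$ commutes with $e^{\pm it\la D\ra}$ and is a bounded multiplier. For part~(ii), fix $1\leq l\leq k+10$, apply this cap-by-cap and take the $\ell^2$-sum:
\begin{equation*}
\Bigl(\sum_{\ka\in\Ka_l}\|P_\ka P_k u\|_{L^p_tL^q_x}^2\Bigr)^{1/2}\ls 2^{ks}\Bigl(\sum_{\ka\in\Ka_l}\|P_\ka P_k u\|_{V^2_{\pm\la D\ra}}^2\Bigr)^{1/2}\ls 2^{ks}\|P_k u\|_{S^\pm_k},
\end{equation*}
where the final inequality is the square-summability built into $S^\pm_k$ via Remark~\ref{rmk:cap-sum}.

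For part~(i), since $p,q\geq 2$, the angular Littlewood-Paley square-function on $L^q_x$ combined with Minkowski's inequality gives
\begin{equation*}
\|P_k u\|_{L^p_tL^q_x}\ls\Bigl(\sum_{\ka\in\Ka_k}\|P_\ka P_k u\|_{L^p_tL^q_x}^2\Bigr)^{1/2},
\end{equation*}
and part~(ii) at $l=k$ concludes. The only genuinely technical point, rather than a true obstacle, is tracking the correct exponent $s$ in the two distinct dispersive regimes encoded in \eqref{eq:bigk}; the non-endpoint nature of the admissible pairs removes any need to invoke the delicate adapted-frame estimates of Section~\ref{sect:le}, so no ideas beyond standard $V^2$-theory and \eqref{eq:bigk} are required.
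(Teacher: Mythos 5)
Your treatment of part ii) follows essentially the same route as the paper: the free-solution Strichartz bound via $TT^*$ and \eqref{eq:bigk}, transfer to $U^p_{\pm\la D\ra}$-atoms and then the embedding $V^2_{\pm\la D\ra}\hookrightarrow U^p_{\pm\la D\ra}$ for $p>2$, and finally the $\ell^2$-in-caps structure of $S^\pm_k$; that part is sound (for $k<l\leq k+10$ the needed cap-summability is not literally the $l\leq l'$ case of Remark \ref{rmk:cap-sum}, but it follows because each cap in $\Ka_l$ meets only boundedly many caps in $\Ka_k$ and the $P_\ka$ are uniformly bounded on $V^2_{\pm\la D\ra}$). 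The genuine gap is your reduction of part i) to part ii): the inequality
\[
\|P_k u\|_{L^p_tL^q_x}\ls\Bigl(\sum_{\ka\in\Ka_k}\|P_\ka P_k u\|_{L^p_tL^q_x}^2\Bigr)^{1/2}
\]
at the finest angular scale $l=k$ is not a consequence of standard Littlewood--Paley theory plus Minkowski; it is a reverse square-function (decoupling-type) statement for $\approx 2^k$ sectors of a thick annulus, and it is false without loss for $q>2$. Indeed, test at a fixed time with $\widehat{u}\approx \mathbf{1}_{\tilde A_k}$: each sector piece satisfies $\|P_\ka P_k u\|_{L^q_x}\approx 2^{k(1-1/q)}$, so the right-hand side is $\approx 2^{k(3/2-1/q)}$, while the left-hand side is $\approx 2^{2k(1-1/q)}$, so the inequality loses a factor $2^{k(1/2-1/q)}$. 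Since the admissible range in the corollary allows arbitrarily large $q$, this step cannot be salvaged by decoupling-type results either.

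The repair is immediate, and is presumably what the paper intends by ``it suffices to prove ii)'': apply \eqref{eq:full-str-loc} with $l=1$, where $\Ka_1$ contains only boundedly many caps, and conclude \eqref{eq:full-str} by the triangle inequality. Alternatively, run your $U^p$/$V^2$ argument directly on $P_k u$ and use the $L^2_x$-almost-orthogonality of the cap projections to obtain $\|P_k u\|^2_{V^2_{\pm\la D\ra}}\ls\sum_{\ka\in\Ka_k}\|P_\ka P_k u\|^2_{V^2_{\pm\la D\ra}}$, which is controlled by $\|P_k u\|^2_{S^\pm_k}$. With either of these replacements your argument coincides with the paper's proof.
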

\begin{proof}
  It suffices to prove ii). The estimate holds for $P_k P_\kappa u$ in
  the atomic space $U^p_{\pm\la D \ra }$ because it is true for free
  solutions, which follows from $TT^*$ argument and \eqref{eq:bigk},
  hence it holds for $U^p$-atoms. Now, by changing $P_k P_\kappa u$ on
  a set of measure zero, we may assume that $u$ is right-continuous,
  hence the claim follows from $\|P_k P_\kappa u\|_{U^p_{\pm\la D \ra
    }}\ls \|P_k P_\ka u\|_{V^2_{\pm\la D \ra }}$, which holds for any
  $p>2$, see \cite[formula (189)]{stta10}, and \cite[Section 2]{hhk09}
  for more details on these spaces. The claim follows from the
  definition of $\|\cdot \|_{S^\pm_k}$ and
  \[
  \sup_{1\leq l \leq k+10} \sum_{\ka \in \mathcal{K}_l}\|P_\ka f
  \|_{V^2_{\pm \la D \ra }}^2 \ls \sum_{\ka \in \mathcal{K}_{k}}\|
  P_\ka f \|_{V^2_{\pm \la D \ra }}^2,
  \]
  which is obvious.
\end{proof}
Clearly, one can also interpolate the estimates provided by Corollary
\ref{cor:full-str} to obtain all Klein-Gordon admissible pairs (up to
endpoints).
\section{Bilinear and trilinear estimates}\label{sect:bil-est}
In this section we provide the crucial bilinear $L^2_{t,x}$-type estimates for
functions in our spaces. For technical reasons, we also provide some
trilinear estimates at the end of the section.

We use the same convention as in \cite[Section 5]{BH} throughout the
rest of the paper, namely that $u$'s denote scalar-valued
functions $u: \R \times \R^2 \rightarrow \C$, while $\psi$'s
denote vector-valued functions $\psi : \R \times \R^2 \rightarrow
\C^2$. As before, a function $f$ is said to be localized at frequency
$2^k$ if $f=\tilde{P}_kf$ if $k\geq 90$ or $f=P_{\leq 90}f$ if
$k=89$. The first main result in this section is
\begin{pro}\label{Pbil} {\rm i)} For all $k_1\geq 89$ and $k_2\geq
  100$ with $10 \leq |k_1-k_2|$ and $\psi_j \in S^\pm_{k_j}$ localized
  at frequency $2^{k_j}$ for $j=1,2$, the following holds true:
  \begin{equation} \label{bil2} \big\| \la \Pi_\pm(D) \psi_1 , \beta
    \Pi_\pm(D) \psi_2 \ra \big\|_{L^2} \ls 2^{\frac{k_1}2} \| \psi_1
    \|_{S_{k_1}^\pm} \| \psi_2 \|_{S_{k_2}^{\pm,w}}
  \end{equation}

  {\rm ii)} If in addition $l \leq \min(k_1,k_2)+10$, then
\begin{equation} \label{bil3}
     \Bigg\| \sum_{\ka_1,\ka_2 \in \mathcal{K}_{l}: \atop \dist(\pm
        \ka_1, \pm \ka_2)
        \ls 2^{-l}} \la \Pi_\pm(D) \tilde P_{\ka_1} \psi_1, \beta \Pi_\pm(D) \tilde P_{\ka_2}  \psi_2 \ra \Bigg\|_{L^2} 
      \ls 2^{\frac{k_1-l}2} \| \psi_1 \|_{S_{k_1}^\pm} \| \psi_2
      \|_{S_{k_2}^{\pm,w}}.
  \end{equation}

  In both \eqref{bil2} and \eqref{bil3} the sign of each $\pm \ka$ and
  $\Pi_\pm$ is chosen to be consistent with the one of the
  corresponding $S^\pm$.

  iii) In the case $|k_1-k_2| \leq 10$ the above
  \eqref{bil2}-\eqref{bil3} hold true provided the following \emph{parallel
    interaction} term  is subtracted:
  \[
  \sum_{\ka_1, \ka_2 \in \mathcal{K}_{k_2}: \atop \dist(\pm \ka_1, \pm
    \ka_2) \leq 2^{-k_2+3}} \la \Pi_\pm(D) \tilde P_{\ka_1} \psi_1,
  \beta \Pi_\pm(D) \tilde P_{\ka_2} \psi_2 \ra.
  \]

  iv) If $S_{k_2}^{\pm,w}$ is replaced with $S_{k_2}^{\pm}$, then
  \eqref{bil2} and \eqref{bil3} improve as follows:

  - the factor becomes $2^{\frac{\min(k_1,k_2)}2}$, respectively,
  $2^{\frac{\min(k_1,k_2)-l}2}$;

  - they hold for all $k_1,k_2 \geq 89$ (in particular, no terms need
  to be subtracted in the case $|k_1-k_2| \leq 10$).

\end{pro}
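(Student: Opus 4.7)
My plan is to reduce the bilinear $L^2_{t,x}$ bound to the standard scheme
\[
\|u_1 u_2\|_{L^2_{t,x}} \leq \|u_1\|_{L^2_{t_\Theta}L^\infty_{x_\Theta}} \|u_2\|_{L^\infty_{t_\Theta}L^2_{x_\Theta}}
\]
applied in a common adapted $(\lambda,\omega)$-frame $\Theta$. The roles are dictated by the spaces: $\psi_1\in S_{k_1}^\pm$ controls $\|\cdot\|_{END^\pm_{k_1}}$ and the sum estimate \eqref{MStr}, so Strichartz goes on $\psi_1$, while $\psi_2\in S_{k_2}^{\pm,w}$ retains only the modulation-localized adapted-frame energy norms $\|Q^\pm_{\prec k_2-2l}P_\ka\psi_2\|_{S^\pm[k_2,\ka]}$ and the $\dot X^{\pm,\frac12,\infty}$-piece from \eqref{eq:skweak}, so the energy estimate from Theorem \ref{thm:Energy} goes on $\psi_2$ after a modulation truncation. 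The symbol of $\Pi_\pm(\xi)\beta\Pi_\pm(\eta)$, computed from \eqref{PiPi} together with the identity $\Pi_\pm(D)\beta = \beta(\Pi_\mp(D)\mp\beta/\la D\ra)$, is of size $\mathcal{O}(\angle(\xi,\eta)) + \mathcal{O}(2^{-k_1}+2^{-k_2})$ and supplies the angular decay that compensates the loss of the energy estimate whenever the caps are at angular distance $\approx 2^{-l}$.

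I would first prove part ii). Fix $l$ with $1\leq l\leq\min(k_1,k_2)+10$, decompose $\tilde P_{k_j}\psi_j = \sum_{\ka_j\in\mathcal{K}_l}P_{\ka_j}\psi_j$, and retain the pairs with $\dist(\pm\ka_1,\pm\ka_2)\ls 2^{-l}$. For each admissible pair, split $\psi_2$ by modulation at threshold $2^{k_2-2l}$. The tail $\tilde Q^\pm_{\succeq k_2-2l}\psi_2$ is absorbed by $\dot X^{\pm,\frac12,\infty}$ combined with Bernstein $\|\psi_1\|_{L^\infty_{t,x}}\ls 2^{k_1}\|\psi_1\|_{L^\infty_t L^2_x}$. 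On the low-modulation piece $Q^\pm_{\prec k_2-2l}P_{\ka_2}\psi_2$, I would refine $\ka_1$ to caps $\ka_1'\in\mathcal{K}_{k_1+10}$ contained in $\ka_1$ and decompose $P_{\ka_1'}\psi_1=\sum_{\Theta\in\Lambda_{k_1,\ka_1'}}\phi_\Theta$ using the $\|\cdot\|_{\sum_{\Lambda_{k_1,\ka_1'}}L^2L^\infty}$-part of $\|\psi_1\|_{END^\pm_{k_1}}$; Theorem \ref{thm:estr}(iii) (estimate \eqref{MStr}) guarantees that the total Strichartz cost summed over $\ka_1'$ is $2^{(k_1-l)/2}\|\psi_1\|_{S_{k_1}^\pm}$. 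For each such $\Theta$, the energy estimate \eqref{DH} of Theorem \ref{thm:Energy} bounds $\|Q^\pm_{\prec k_2-2l}P_{\ka_2}\psi_2\|_{L^\infty_{t_\Theta}L^2_{x_\Theta}}\ls 2^l\|P_{\ka_2}\psi_2\|_{S^\pm[k_2,\ka_2]}$, and this $2^l$-loss is exactly cancelled by the null gain $2^{-l}$. Applying the toy scheme, summing in $\Theta$ and $\ka_1'$, then in the admissible pairs $(\ka_1,\ka_2)$ via Cauchy--Schwarz and the almost-orthogonality of the outputs $\ka_1'+\ka_2$, produces \eqref{bil3}. Part i) then follows by summing \eqref{bil3} over $l\in[1,\min(k_1,k_2)+10]$: the geometric series is dominated at $l=1$, giving $2^{k_1/2}$; the $l=0$ residual uses the $\mathcal{O}(2^{-k_1}+2^{-k_2})$-remainder of the null symbol together with the frequency gap $|k_1-k_2|\geq 10$.

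Part iii) reflects the fact that Theorem \ref{thm:Energy}(i) provides no bound in the range $|j-k|\leq 9$ with $l=m+10$, which corresponds precisely to the excluded sector $|k_1-k_2|\leq 10$, $\dist(\pm\ka_1,\pm\ka_2)\leq 2^{-k_2+3}$; these parallel interactions must be removed since no favourable frame exists for $\psi_2$ in $S_{k_2}^{\pm,w}$. For part iv), the stronger assumption $\psi_2\in S_{k_2}^\pm$ grants access to \eqref{Sloc2} and to the $\Omega_{k_2,\ka_2}$-frames, so one may (a) place Strichartz on the lower-frequency factor, improving $2^{k_1/2}$ to $2^{\min(k_1,k_2)/2}$, and (b) invoke the alternative energy bound \eqref{DH2} paired with the $\Omega$-frames, which covers exactly the previously excluded parallel range and removes the need for any subtraction. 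The main obstacle is the bookkeeping: for every angular scale $l$ and frequency configuration $(k_1,k_2)$ one must verify that the frame $\Theta$ produced on the Strichartz side falls into a regime where Theorem \ref{thm:Energy} yields a sharp bound on $\psi_2$ in the same frame. The three-dimensional argument of \cite[Prop.\ 5.1]{BH} serves as a template, but the two-dimensional analysis is substantially more delicate because of the additional $(\Lambda,\Omega)$ dichotomy built into the function spaces.
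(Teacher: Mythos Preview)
Your overall architecture---Strichartz in adapted frames on $\psi_1$, modulation truncation at $2^{k_2-2l}$ followed by adapted-frame energy on $\psi_2$, and the null gain $2^{-l}$ from \eqref{PiPi}---is exactly the paper's strategy, and your treatment of the high-modulation tail and of the sum over $l$ is correct. There are, however, two genuine gaps.

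For parts i)--ii) you use only the $\Lambda$-frames and cite \eqref{DH} on the energy side. But as remarked right after Theorem~\ref{thm:Energy}, the estimate \eqref{DH} blows up in the range $m-9\leq l\leq m+9$ (here $j=k_1$, $m=\min(k_1,k_2)$), and correspondingly the definition of $S^\pm[k_2,\ka_2,j]$ switches to the $\Omega$-frame norm $L^\infty_{x^2_\Theta}L^2_{(t,x^1)_\Theta}$ precisely when $\max(90,l-9)\leq\min(j,k)\leq l+9$. Hence $S^{\pm,w}_{k_2}$ carries no $\Lambda$-frame energy control there, and your pairing breaks down. In the paper's proof the term $A_2$ is split into the three regimes $l\leq k_1-11$, $k_1-10\leq l\leq k_1+9$, and $l=k_1+10$; in the middle regime one must pair the $\Omega$-Strichartz component $2^{-k_1}\|P_\ka\psi_1\|^2_{\sum_{\Omega_{k_1,\ka}}L^2_{x^2_\Theta}L^\infty_{(t,x^1)_\Theta}}$ of $END^\pm_{k_1}$ with the energy bound \eqref{DH2}. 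This is a substantive case distinction, not bookkeeping.

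For part iv) your proposed mechanism is wrong: \eqref{DH2} is only stated for $l\leq m+9$ and therefore does not reach the parallel scale $l=m+10$, so the $\Omega$-frames cannot recover the subtracted term when $|k_1-k_2|\leq 10$. In the paper the parallel interaction is instead estimated by the cap-localized $L^4_{t,x}$ Strichartz bound of Corollary~\ref{cor:full-str}, which becomes available once both factors lie in the strong space containing $V^2_{\pm\la D\ra}$.
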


\begin{proof}[Proof of Proposition \ref{Pbil}] 
  To make the exposition easier, we choose to prove all the estimates
  for the $+$ choice in all terms.  A careful examination of the
  argument reveals that the other choices follow in a similar manner.
  
  We consider $k_1\geq 89$ and $k_2\geq 100$ and distinguish the
  following three cases: $k_1 \leq k_2-10, |k_1-k_2| \leq 10$ and $k_1
  \geq k_2+10$. We will work out in detail the first case, that is for
  $k_1 \leq k_2-10$. One should also note the close relation between
  these ranges and the ones given by the energy estimates in
  Theorem \ref{thm:Energy}.
    
  We will reduce \eqref{bil2} and \eqref{bil3} to the following claim:
  For all $u_1,u_2$ localized at frequencies $2^{k_1}$, respectively
  $2^{k_2}$, and $l \leq k_1+10$ the following estimate holds true:
  \begin{equation} \label{bas1} \sum_{\ka_1,\ka_2 \in
      \mathcal{K}_{l}:*} \| \tilde P_{\ka_1}u_1 \tilde P_{\ka_2} u_2
    \|_{L^2} \ls 2^{\frac{k_1+l}2} \|u_1 \|_{S^+_{k_1}} \|u_2
    \|_{S_{k_2}^{+,w}},
  \end{equation}
  where $*$ means that the above sum is restricted to the range
  $2^{-l-2} \leq \dist(\ka_1, \ka_2) \leq 2^{-l+2}$ or $\dist( \ka_1,
  \ka_2) \leq 2^{-l+2}$ in the case $l=k_1+ 10$.

  We rely on the following estimate:
  \begin{equation*}
    \sum_{\ka_1,\ka_2 \in \mathcal{K}_{l}: *} \| \tilde P_{\ka_1} u_1 \cdot \tilde P_{\ka_2} u_2  \|_{L^2} 
    \leq A_1+A_2,
  \end{equation*}
  where
  \begin{align*}
    A_1:=& \sum_{\ka_1,\ka_2 \in \mathcal{K}_{l}:*} \| \tilde
    P_{\ka_1} u_1 \|_{L^\infty} \| \tilde P_{\ka_2}
    Q_{\succeq k_2-2l} u_2 \|_{L^2} \\
    \ls{} & 2^{\frac{2k_1-l}2} \Big(\sum_{\ka_1\in \mathcal{K}_{l}} \|
    \tilde P_{\ka_1} u_1 \|_{L^\infty_t L^2_x}^2
    \Big)^{\frac12}\Big(\sum_{\ka_2 \in \mathcal{K}_{l}} \| \tilde
    P_{\ka_2} Q_{\succeq k_2-2l}
    u_2 \|_{L^2}^2 \Big)^{\frac12} \\
    \ls{} & 2^{\frac{2k_1-l}2} \Big(\sum_{\ka_1\in \mathcal{K}_{l}} \|
    \tilde P_{\ka_1} u_1 \|_{L^\infty_t L^2_x} \Big)^{\frac12}
    2^{-\frac{k_2-2l}2} \| Q_{\succeq k_2-2l}
    u_2 \|_{\dot{X}^{+,\frac12,\infty}}\\
    \ls{}& 2^{\frac{k_1+l}2} \| u_1 \|_{S^+_{k_1}} \| u_2
    \|_{S_{k_2}^{+,w}}.
  \end{align*}
  The second term $A_2$, corresponding to the interaction $ \tilde
  P_{\ka_1} u_1 \cdot Q_{\prec k_2-2l} \tilde P_{\ka_2} u_2$, needs
  particular attention. We distinguish three particular scenarios $l
  \leq k_1-11$, $k_1 -10 \leq l \leq k_1+9$ and $l=k_1+10$ and each of
  them is dealt with one of the three energy in frames components in
  the definition of $S^+[k_2,\ka_2]$.
  
  If $l \leq k_1-11$, then we estimate as follows
  \begin{align*}
    A_2:=&  \sum_{\ka_1,\ka_2 \in \mathcal{K}_{l}:*} \sum_{\ka \in \mathcal{K}_{k_1+10}} \| \tilde P_{\ka}  \tilde P_{\ka_1} u_1 \|_{\sum_{ \Lambda_{k_1,\ka}}L^2_{t_{\Th}}L^\infty_{x_{\Th}}}\\ &\cdot \sup_{\Th \in \Lambda_{k_1,\ka}} \| Q_{\prec k_2-2l} \tilde P_{\ka_2} u_2 \|_{L^\infty_{t_{\Th}}L^2_{x_{\Th}}}\\
    \ls{}&\Big(\sum_{\ka_2 \in \mathcal{K}_{l}}\sup_{\ka \in
      \mathcal{K}_{k_1+10}: \atop \ka \cap \ka_1 \ne \emptyset}
    \sup_{\Th \in \Lambda_{k_1,\ka}} \| Q_{\prec k_2-2l} \tilde
    P_{\ka_2} u_2
    \|_{L^\infty_{t_{\Th}}L^2_{x_{\Th}}}^2\Big)^{\frac12}\\
    &\cdot \Big(\sum_{\ka_1 \in \mathcal{K}_{l}} \Big(\sum_{\ka \in \mathcal{K}_{k_1+10}} \| P_{\ka}  \tilde P_{\ka_1} u_1 \|_{\sum_{ \Lambda_{k_1,\ka}}L^2_{t_{\Th}}L^\infty_{x_{\Th}}} \Big)^2\Big)^{\frac12}  \\
    \ls{}& 2^{\frac{k_1-l}2} \| u_1 \|_{S^+_{k_1}} 2^{l} \| u_2
    \|_{S_{k_2}^{+,w}}.
  \end{align*}
  If $k_1 -10 \leq l \leq k_1+9$, then
  \begin{align*}
    A_2:=& \sum_{\ka_1,\ka_2 \in \mathcal{K}_{l}:*} \sum_{\ka \in
      \mathcal{K}_{k_1+10}} \| P_{\ka} \tilde P_{\ka_1} u_1
    \|_{\sum_{\Omega_{k_1,\ka}}L^2_{x^2_{\Th}}L^\infty_{(t,x^1)_{\Th}}}\\
    & \cdot
    \sup_{\Th \in \Omega_{k_1,\ka}} \| Q_{\prec k_2-2l} \tilde P_{\ka_2} u_2 \|_{L^\infty_{t_{\Th}}L^2_{x_{\Th}}}\\
    \ls{}&\Big(\sum_{\ka_2 \in \mathcal{K}_{l}} \sup_{\ka \in
      \mathcal{K}_{k_1+10}: \atop \ka \cap \ka_1 \ne \emptyset}
    \sup_{\Th \in \Omega_{k_1,\ka}} \| Q_{\prec k_2-2l} \tilde
    P_{\ka_2} u_2
    \|_{L^\infty_{t_{\Th}}L^2_{x_{\Th}}}^2\Big)^{\frac12} \\ & \cdot \Big(\sum_{\ka_1 \in \mathcal{K}_{l}}\Big(\sum_{\ka \in \mathcal{K}_{k_1+10}} \| P_{\ka} \tilde P_{\ka_1} u_1 \|_{\sum_{\Omega_{k_1,\ka}}L^2_{t_{\Th}}L^\infty_{x_{\Th}}} \Big)^2\Big)^{\frac12}\\
    \ls{}& 2^{\frac{k_1}2} \| \tilde P_{\ka_1} u_1 \|_{S^+_{k_1}}
    2^{\frac{l}2} \| \tilde P_{\ka_2} u_2 \|_{S_{k_2}^{+,w}}.
  \end{align*}
  If $ l = k_1+10$, we repeat the argument of the first case without
  the additional localization to caps of size $2^{k_1+10}$, and obtain
  \begin{align*}
    A_2:=& \sum_{\ka_1,\ka_2 \in \mathcal{K}_{l}:*} \|\tilde P_{\ka_1} u_1 \|_{\sum_{\Th \in \Lambda_{k_1,\ka_1}}L^2_{t_{\Th}}L^\infty_{x_{\Th}}} \sup_{\Th \in \Lambda_{k_1,\ka_1}} \| Q_{\prec k_2-2l} \tilde P_{\ka_2} u_2 \|_{L^\infty_{t_{\Th}}L^2_{x_{\Th}}}\\
    \ls{}& \| u_1 \|_{S^+_{k_1}} 2^{k_1} \| u_2 \|_{S_{k_2}^{+,w}}.
  \end{align*}

  Obviously, \eqref{bas1} implies
  \begin{equation} \label{bas1a} \sum_{\ka_1,\ka_2 \in
      \mathcal{K}_{l}:*} \| \tilde P_{\ka_1}u_1 \overline{\tilde
      P_{\ka_2} u_2} \|_{L^2} \ls 2^{\frac{k_1+l}2} \| u_1
    \|_{S^+_{k_1}} \| u_2 \|_{S_{k_2}^{+,w}}.
  \end{equation}

  Now, we turn to the proof of \eqref{bil3}.  Using \eqref{bas1a} we
  claim the following
  \begin{equation} \label{basnul}\begin{split} &\sum_{\ka_1,\ka_2 \in
        \mathcal{K}_{l}: *}
      \| \la \Pi_+(D)  P_{\ka_1} \psi_1, \beta \Pi_+(D) P_{\ka_2} \psi_2 \ra \|_{L^2}\\
      \ls{}& 2^{\frac{k_1-l}2} \| \Pi_+(D)\psi_1 \|_{S^+_{k_1}} \|
      \Pi_+(D) \psi_2 \|_{S_{k_2}^{+,w}}.
    \end{split}
  \end{equation} To prove \eqref{basnul}, we linearize
  the operator $\Pi_+(D)$ as follows
  \[
  \Pi_+(D) = \Pi_+(2^{k_j} \omega(\ka_j)) + \Pi_+(D) - \Pi_+(2^{k_j}
  \omega(\ka_j))
  \]
  where $j=1,2$. Taking into account \eqref{bas1a} and \eqref{PiPi} we
  obtain
  \begin{align*}
    &\sum_{\ka_1,\ka_2 \in \mathcal{K}_{l}: *} \| \la \Pi_+(2^{k_1}
    \omega(\ka_1)) P_{\ka_1} \psi_1 , \beta \Pi_+(2^{k_2}
    \omega(\ka_2)) P_{\ka_2} \psi_2 \ra
    \|_{L^2} \\
    \ls{}& 2^{\frac{k_1-l}2} \| \psi_1 \|_{S^+_{k_1}} \| \psi_2
    \|_{S_{k_2}^{+,w}}
  \end{align*}
  where we have used $|\angle(\omega(\ka_1),\omega(\ka_2)) | \ls
  2^{-l}$ and that $  \mathcal{O}(2^{-k_1} + 2^{-k_2}) \ls 2^{-k_1} \ls 2^{-l}$.
  
  The estimate for the remaining terms follows from using
  \eqref{bas1a} and \eqref{sta}.  Now, we use
  \begin{align*}
    &\| \la \Pi_+(D) \psi_1 , \beta \Pi_+(D) \psi_2 \ra \|_{L^2} \\
    \ls{}& \sum_{1\leq l \leq k_1+10} \sum_{\ka_1,\ka_2 \in
      \mathcal{K}_{l}:*} \| \la P_{\ka_1} \Pi_+(D) \psi_1 , \beta
    P_{\ka_2} \Pi_+(D) \psi_2 \ra \|_{L^2},
  \end{align*}
  and \eqref{basnul} and observe that the summation with respect to
  $l$ is performed using the factor of $2^{-\frac{l}2}$.
  
  This finishes the proof of i) and ii) in the case $k_1 \leq
  k_2-10$. The proof of \eqref{bas1} in the case $k_1 \geq k_2+10$ is
  similar in the case $l \leq k_2-11$ and $l=k_2+10$, and also in the
  case $k_2-10\leq l \leq k_2+9$ for the contributions $A_1$. In the
  case of $A_2$, we modify the argument as in \cite[Prop.~5.1]{BH}: We
  decompose
  \[
  \tilde P_{\ka_1} u_1 = \sum_{\ka \in \mathcal{K}_{k_1+10}} P_{\ka}
  \tilde P_{\ka_1} u_1
  \]
  and note that the interactions $P_{\ka} \tilde P_{\ka_1} u_1 \tilde
  P_{\ka_2} u_2$ are almost orthogonal with respect to $\ka \in
  \mathcal{K}_{k_1+10}$, which follows from the fact that both
  $P_{\ka} \tilde P_{\ka_1} u_1$ and $ \tilde P_{\ka_2} u_2$ have
  Fourier-support of size $\approx 1$ in the direction orthogonal to
  $\omega(\ka_2)$. As a consequence
  \[
    \| \tilde P_{\ka_1} u_1 \cdot \tilde P_{\ka_2} Q_{\prec
      k_2-2l} u_2 \|_{L^2}^2
    \ls{} \sum_{\ka \in \mathcal{K}_{k_1+10}} \| P_{\ka} \tilde
    P_{\ka_1} u_1 \cdot \tilde P_{\ka_2} Q_{\prec k_2-2l} u_2
    \|_{L^2}^2
  \]
  and we can proceed as before.

  The proof in the case $|k_1-k_2| \leq 10$ is similar, except that
  there there is no mechanism to deal with the \emph{parallel
    interactions}
  \[
  \sum_{\ka_1,\ka_2 \in \mathcal{K}_{k_2}: \atop \dist(\ka_1,\ka_2)
    \leq 2^{-k_2+3}} \la \tilde P_{\ka_1} u_1, \tilde P_{\ka_2} u_2
  \ra
  \]
  in \eqref{bas1}. This is the reason we cannot estimate this term and
  claim only the equivalent of \eqref{bil2}-\eqref{bil3} which
  excludes it.

  Finally, the improvement in iv) is justified as follows: Since both
  terms are in $S_k^+$ type spaces, by symmetry reasons we can replace
  $2^{\frac{k_1}2}$ by $2^{\frac{\min(k_1,k_2)}2}$ in \eqref{bil2} and
  similarly in \eqref{bil3}. If $89\leq k_1,k_2\leq 100$ we simply use
  the $L^4$-Strichartz bound on both functions. In the other cases
  where $|k_1-k_2| \leq 10$ we use the fact that in $S_k^+$ we have
  access to the full family of Strichartz estimates for both terms and
  we estimate the \emph{parallel interactions} term as follows:
  \begin{align*}
     \Big\| \sum_{\ka_1,\ka_2 \in \mathcal{K}_{k_2}: \atop
      \dist(\ka_1,\ka_2) \leq 2^{-k_2+3}}
    \la  \tilde P_{\ka_1} u_1, \tilde P_{\ka_2}  u_2 \ra \Big\| 
    &\ls  \sum_{\ka_1,\ka_2 \in \mathcal{K}_{k_2}: \atop
      \dist(\ka_1,\ka_2) \leq 2^{-k_2+3}}
    \| \tilde P_{\ka_1} u_1\|_{L^4} \| \tilde P_{\ka_2}  u_2 \|_{L^4} \\
   & \ls  \big( \sum_{\ka_1 \in \mathcal{K}_{k_2}} \| \tilde P_{\ka_1}
    u_1\|_{L^4}^2 \big)^\frac12
    \big( \sum_{\ka_2 \in \mathcal{K}_{k_2}} \|  \tilde P_{\ka_2}  u_2 \|^2_{L^4} \big)^\frac12 \\
    & \ls  2^{k_2} \| u_1 \|_{S^+_{k_1}} \| u_2 \|_{S^+_{k_2}}.
  \end{align*}
  This matches the numerology claimed in \eqref{bas1} and adds up
  correctly with the other angular interactions to give \eqref{bil2}
  and \eqref{bil3}.
\end{proof}

\begin{rmk}\label{rmk:interp}
  The estimates of Proposition \ref{Pbil} can be interpolated with the
  trivial estimate
  \[
  \||\psi_1||\psi_2|\|_{L^\infty_tL^2_x}\ls 2^{k_1}
  \|\psi_1\|_{L^\infty_tL^2_x}\|\psi_2\|_{L^\infty_tL^2_x}
  \]
  obtain by the Bernstein inequality.  In particular, for $2\leq r
  \leq \infty$ we obtain
  \begin{equation}\label{eq:interp-bil2}
    \big\| \la \Pi_\pm(D) \psi_1 , \beta
    \Pi_\pm(D) \psi_2 \ra \big\|_{L^r_t L^2_x} \ls 2^{k_1(1-\frac{1}{r})} \| \psi_1
    \|_{S_{k_1}^\pm} \| \psi_2 \|_{S_{k_2}^{\pm}}.
  \end{equation}
\end{rmk}

We finish this section with
two trilinear estimates.

\begin{lem}\label{lem:TRI1} Assume $k_1 \leq k_2 \leq k_3$ and each $\psi_i$ is supported at frequency $2^{k_i}, i=1,2,3$. 
  The following estimate holds true for any $\frac43 < p \leq 2$ and
  any choice of signs $s_i \in \{\pm\}, i = 1,2,3$:
  \begin{equation} \label{TRI1}
    \begin{split}
      & 2^{(\frac1p - \frac12)k_3}
      \|  \la \Pi_{s_1}(D) \psi_1, \beta \Pi_{s_2}(D) \psi_2 \ra \beta \Pi_{s_3}(D) \psi_3 \|_{L^p_t L^2_x} \\
      \ls & 2^{(\frac38 - \frac1{2p})(k_1-k_2)}
      2^{(1-\frac1p)(k_2-k_3)} \prod_{j=1}^3 2^{\frac{k_j}2} \| \psi_j
      \|_{S^{s_j}_{k_j}}.
    \end{split}
  \end{equation}
\end{lem}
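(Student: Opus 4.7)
\emph{Proof plan.} Set $F := \la \Pi_{s_1}(D)\psi_1, \beta\Pi_{s_2}(D)\psi_2\ra$ and $G := \beta\Pi_{s_3}(D)\psi_3$. The plan is to combine the bilinear null-form estimates of Proposition~\ref{Pbil}(iv) and Remark~\ref{rmk:interp} with the Strichartz estimates of Corollary~\ref{cor:full-str}, through H\"older's inequality in the split
\[
\|FG\|_{L^p_t L^2_x} \leq \|F\|_{L^a_t L^{q_1}_x}\,\|G\|_{L^b_t L^{q_2}_x}, \quad \tfrac{1}{p} = \tfrac{1}{a}+\tfrac{1}{b},\ \tfrac{1}{2}=\tfrac{1}{q_1}+\tfrac{1}{q_2}.
\]
Since $F$ is frequency-localized at $|\xi|\lesssim 2^{k_2}$, Remark~\ref{rmk:interp} with spatial Bernstein produces
$\|F\|_{L^a L^{q_1}} \lesssim 2^{k_1(1-1/a)+k_2(1-2/q_1)}\|\psi_1\|_{S_{k_1}^{s_1}}\|\psi_2\|_{S_{k_2}^{s_2}}$ for $a,q_1\in[2,\infty]$; Corollary~\ref{cor:full-str} combined with Bernstein yields $\|G\|_{L^b L^{q_2}}\lesssim 2^{k_3 s_3}\|\psi_3\|_{S^{s_3}_{k_3}}$ for the appropriate Strichartz-admissible pair and regularity $s_3$.

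By complex interpolation, it suffices to establish the bound at the two endpoints $p=2$ and $p=4/3$, where the target reduces to
\[
\|FG\|_{L^2_{t,x}}\lesssim 2^{\tfrac58 k_1 + \tfrac78 k_2}\prod_j \|\psi_j\|_{S}, \qquad \|FG\|_{L^{4/3}_tL^2_x}\lesssim 2^{\tfrac12 k_1 + \tfrac34 k_2}\prod_j \|\psi_j\|_{S},
\]
both free of $k_3$. This vanishing $k_3$-exponent essentially forces $G$ to be placed in $L^\infty_tL^2_x$; at each endpoint, one then selects the remaining H\"older exponents $(a,q_1)$ so that the bilinear bound on $F$ matches the target scaling in $k_1,k_2$, after which the intermediate $p\in(\tfrac43,2]$ follows by interpolation.

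The main obstacle is reaching the $k_2$-exponent $\tfrac98-\tfrac{1}{2p}<1$: the direct H\"older bound with $F\in L^p_tL^\infty_x$ (via Bernstein from bilinear $L^p_tL^2_x$) and $G\in L^\infty_tL^2_x$ produces $k_2$-exponent equal to $1$, overshooting the target by the factor $2^{(k_2-k_1)(\tfrac58-\tfrac1{2p})}$. Closing this gap requires extra structure: one option is to use the angular-cap-refined bilinear estimate~\eqref{bil3}, where each cap-localized piece of $F$ has Fourier support of smaller volume and Plancherel-based cap orthogonality sharpens the spatial Bernstein step; the other option is to exploit the high-modulation $L^{p_0}_tL^2_x$-bound embedded in $\|\psi_3\|_{S^{s_3}_{k_3}}$ via a modulation decomposition $\psi_3 = Q^{s_3}_{\prec m}\psi_3+Q^{s_3}_{\succeq m}\psi_3$, placing the low-modulation piece in Strichartz/Bernstein and the high-modulation piece in $L^{p_0}_tL^2_x$, then optimizing over the cutoff $m$. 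After tracking exponents through either route and interpolating, the claimed bound follows.
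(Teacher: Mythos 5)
There is a genuine gap: your plan stops exactly where the paper's proof has to do its real work. You correctly observe that the Hölder split $\|FG\|_{L^p_tL^2_x}\leq \|F\|_{L^a_tL^{q_1}_x}\|G\|_{L^b_tL^{q_2}_x}$, with the bilinear null-form bound applied to the pair $(\psi_1,\psi_2)$ inside $F$ and $\psi_3$ placed alone in a Strichartz/energy norm, misses the target $k_2$-exponent $\tfrac98-\tfrac1{2p}<1$ by a positive power of $2^{k_2-k_1}$ (your quantitative factor is off — it is $2^{(k_2-k_1)(\frac1{2p}-\frac18)}$, not $2^{(k_2-k_1)(\frac58-\frac1{2p})}$ — but the sign of the gap is right). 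However, the two "options" you offer to close this gap are only gestured at, and neither is the mechanism that actually works. The paper's proof abandons the $F\cdot G$ split altogether: its key idea is to \emph{recombine} $\psi_3$ with $\psi_1$ or with $\psi_2$ and apply the bilinear $L^2_{t,x}$ estimate \eqref{bas1} to that recombined pair, while the null-structure gain $2^{-l}$ is still harvested from the original pairing $\la\psi_1,\beta\psi_2\ra$ via \eqref{PiPi} and \eqref{sta} after decomposing the $(\psi_1,\psi_2)$ interaction into angular caps $\ka_1,\ka_2\in\Ka_l$ at every scale $l$. This requires in addition a cap decomposition of $\psi_3$ relative to $\ka_1,\ka_2$ (the sets $\Ka^{**}_{l'}$, $\Ka^{***}_l=S_1\cup S_2$), a modulation splitting of $\psi_3$ at $2^{k_3-2l}$ (the high-modulation piece being absorbed by the $V^2/\dot X^{\pm,\frac12,\infty}$ part of the norm), an interpolation of the recombined bilinear $L^2$ bound with a trivial $L^\infty_tL^2_x$ bound to generate the $L^{p}$-exponents, and finally the $l$-summation using the residual $2^{-l/2}$. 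None of this is present, or replaceable by what you sketch.

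Concretely, your first fix (cap-refined \eqref{bil3} plus "sharpened Bernstein" on $F$) does not obviously produce the required redistribution of exponents: the angular refinement gains $2^{-l/2}$ in the bilinear bound but you still place $\psi_3$ in $L^\infty_tL^2_x$ and pay Bernstein at frequency $2^{k_2}$ for $F$, and you give no accounting of the cap summation that would have to replace the paper's recombination step. Your second fix (splitting $\psi_3$ in modulation and using the $L^{p_0}_tL^2_x$ component of $S_{k_3}$) handles only the high-modulation part; the low-modulation part is precisely where the recombination with $\psi_1$ or $\psi_2$ is needed, and you do not say how to treat it. Finally, the proposed reduction by complex interpolation to the endpoint $p=\frac43$ is itself unjustified: the lemma is claimed only for $p>\frac43$, the $p=\frac43$ bound is not established anywhere, and interpolation of these multilinear estimates in the $S_{k_j}$-framework would need an argument. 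As it stands the proposal is a plausible opening move plus an accurate diagnosis of why it fails, but the decisive idea of the proof is missing.
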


\begin{proof} The strategy is to recombine $\psi_1$ and $\psi_3$ or
  $\psi_2$ and $\psi_3$ and provide an $L^2$ type estimate as in
  \eqref{Pbil}.
A careful analysis reveals that one can
  still extract gains from the null structure when recombining terms.

  We provide a complete argument for the $\Pi_+(D)$ part of each term,
  that is we assume $\psi_i = \Pi_+(D) \psi_i, \forall i \in
  \{1,2,3\}$.  A similar argument works for the other combinations.
  Fix $0 \leq l \leq k_1+10$ and write
  \[
  I=\sum_{\ka_1, \ka_2 \in \Ka_l: *} \la \tilde P_{\ka_1} \psi_1,
  \beta \tilde P_{\ka_2} \psi_2 \ra \beta \psi_{3}
  \]
  where $*$ indicates that we consider the range $2^{-l+3} \leq
  \dist(\ka_1,\ka_2) \leq 2^{-l+6}$, if $l < k_1+10$, or $\dist(
  \ka_1, \ka_2) \leq 2^{-l+6}$ in the case $l=k_1+ 10$.
 
  Let $l < k_1+10$. Fix $\ka_1,\ka_2 \in \Ka_l$ subject to $*$. We
  explain now how to take advantage of the null condition in this
  context. For $j=1,2$ we decompose
  \[
  \Pi_+(D) = \Pi_+(2^{k_j} \om(\ka_j)) + \Pi_+(D) - \Pi_+(2^{k_j}
  \om(\ka_j))
  \]
  and use \eqref{PiPi} and \eqref{sta} to extract a factor of $2^{-l}$
  from the expression $\la \tilde P_{\ka_1} \psi_1, \beta \tilde
  P_{\ka_2} \psi_2 \ra$ in all the computations below. To keep things
  simple in the estimates below, we skip the step where each $\psi_j,
  j=1,2$ goes through the above decomposition and simply just book the
  factor of $2^{-l}$.
 
  We start with the high modulation component of $\psi_3$ which we
  estimate as follows
  \begin{equation} \label{highp3}
    \begin{split}
      & \| \la \tilde P_{\ka_1}  \psi_1, \beta \tilde P_{\ka_2} \psi_2 \ra \beta Q_{\succeq k_3-2l} \psi_{3} \|_{L^{p}_t L^2_x} \\
      \ls & \ 2^{-l} \|  \tilde P_{\ka_1}  \psi_1 \|_{L^\infty_{t,x}} \| \tilde P_{\ka_2} \psi_2 \|_{L_t^{\frac{2p}{2-p}} L^\infty_x} \| Q_{\succeq k_3-2l} \psi_{3} \|_{L^2_{t,x}} \\
      \ls & \ 2^{\frac{2k_1-l}2} \| \tilde P_{\ka_1} \psi_1
      \|_{\lS^+_{k_1}} 2^{(1+\frac{p-2}{2p})k_2} \| \tilde P_{\ka_2}
      \psi_2 \|_{\lS^+_{k_2}} 2^{-\frac{k_3}2} \| \psi_3 \|_{V^2_{+
          \la D\ra }}.
    \end{split}
  \end{equation}
  For the low modulation component, we decompose
  \begin{equation} \label{decp3}
    \begin{split}
      \psi_{3} = \sum_{l' < l-8} \sum_{\ka_3 \in \Ka_{l'}^{**}}
      P_{\ka_3} \psi_{3} + \sum_{\ka_3 \in \Ka_{l}^{***}} P_{\ka_3}
      \psi_{3}
    \end{split}
  \end{equation}
  where if $\ka_3 \in \Ka_{l'}^{**}$, $d(\ka_3,\ka_1) \approx
  d(\ka_3,\ka_2) \approx 2^{-l'}$, while if $\ka_3 \in \Ka_{l}^{***}$,
  $d(\ka_3,\ka_1) + d(\ka_3,\ka_2) \leq 2^{-l+10}$.  Fix $l' <
  l-8$. Using \eqref{bas1} we estimate
  \begin{align*}
    & \| \la \tilde P_{\ka_1} \psi_1, \beta \tilde P_{\ka_2} \psi_2
    \ra \beta
    \sum_{\ka_3 \in \Ka_{l'}^{**}} P_{\ka_3} Q_{\prec k_3-2l}  \psi_{3} \|_{L^p_tL^2_x}  \\
    \ls & 2^{-l} \| \tilde P_{\ka_1}
    \psi_1\|_{L^{\frac{2p}{2-p}}_tL^\infty_x} \cdot \Big\| \tilde P_{\ka_2} \psi_2  \sum_{\ka_3 \in \Ka_{l'}^{**}}  P_{\ka_3} Q_{\prec k_3-2l}  \psi_3 \Big\|_{L^2}\\
    \ls & 2^{-l} 2^{(1+\frac{p-2}{2p})k_1} \| \tilde P_{\ka_1} \psi_1
    \|_{\lS^+_{k_1}} 2^{\frac{k_2+l'}{2}} \| \tilde P_{\ka_2} \psi_2
    \|_{\lS^+_{k_2}} \| \psi_3 \|_{S^+_{k_3}},
  \end{align*}
  since it follows from the proof of \eqref{bas1} that the operator
  $Q_{\prec k_3-2l}$ is disposable and we only need the $l^2S_{k_2}$
  component for $\tilde P_{\ka_2}\psi_2$.

  For the second sum, where $\ka_3 \in \Ka_l^{***}$, the key property
  is that $2^{-l+1} \leq \dist(\ka_3,\ka_1) + \dist(\ka_3,\ka_2) \ls
  2^{-l}$.  Thus we can split the set $\Ka_l^{***}=S_1 \cup S_2, S_1
  \cap S_2 = \emptyset$ such that $\ka_3 \in S_1$ satisfies
  $\dist(\ka_3,\ka_1) \geq 2^{-l}$, while $\ka_3 \in S_2$ satisfies
  $\dist(\ka_3,\ka_2) \geq 2^{-l}$.

  The part of the sum with $\ka_3 \in S_2$ is estimated as above with
  $l'=l$, thus leading to
  \begin{align*}
    & \| \la \tilde P_{\ka_1} \psi_1, \beta \tilde P_{\ka_2} \psi_2
    \ra \beta
    \sum_{\ka_3 \in  S_2} P_{\ka_3} Q_{\prec k_3-2l}  \psi_{3} \|_{L^p_tL^2_x} \\
    \ls & 2^{-l} 2^{(1+\frac{p-2}{2p})k_1} \| \tilde P_{\ka_1} \psi_1
    \|_{\lS^+_{k_1}} 2^{\frac{k_2+l}{2}} \| \tilde P_{\ka_2} \psi_2
    \|_{\lS^+_{k_2}} \| \psi_3 \|_{S^+_{k_3}}.
  \end{align*}

  The part of the sum with $\ka_3 \in S_1$ is estimated as follows
  \begin{align*}
    & \| \la \tilde P_{\ka_1} \psi_1, \beta \tilde P_{\ka_2} \psi_2
    \ra \beta
    \sum_{\ka_3 \in S_1} P_{\ka_3} Q_{\prec k_3-2l}  \psi_{3} \|_{L^p_tL^2_x}  \\
    \ls & 2^{-l} \| \tilde P_{\ka_2} \psi_2
    \|_{L^{\frac{8p}{4-p}}_tL^\infty_x}
    \cdot \Big\| \tilde P_{\ka_1} \psi_1  \sum_{\ka_3 \in S_1}  P_{\ka_3} Q_{\prec k_3-2l}  \psi_3 \Big\|_{L^{\frac{8p}{4+p}}_tL^2_x}\\
    \ls & 2^{-l} 2^{(\frac98-\frac1{2p})k_2} \| \tilde P_{\ka_2}
    \psi_2 \|_{\lS^+_{k_2}} 2^{(\frac1p+\frac14)\frac{k_1+l}{2}}
    2^{(\frac34 - \frac1p)k_1} \| \tilde P_{\ka_1} \psi_1
    \|_{\lS^+_{k_1}} \| \psi_3 \|_{S^+_{k_3}}.
  \end{align*}
  The last inequality was obtained by interpolating between the two
  estimates
  \[
  \| \tilde P_{\ka_1} \psi_1 \sum_{\ka_3 \in S_1} P_{\ka_3} Q_{\prec
    k_3-2l} \psi_3 \Big\|_{L^2_tL^2_x} \ls 2^{\frac{k_1+l}2} \| \tilde
  P_{\ka_1} \psi_1 \|_{\lS^+_{k_1}} \| \psi_3 \|_{S^+_{k_3}},
  \]
  \[
  \| \tilde P_{\ka_1} \psi_1 \sum_{\ka_3 \in S_1} P_{\ka_3} Q_{\prec
    k_3-2l} \psi_3 \Big\|_{L^\infty_t L^2_x} \ls 2^{k_1} \| \tilde
  P_{\ka_1} \psi_1 \|_{\lS^+_{k_1}} \| \psi_3 \|_{S^+_{k_3}},
  \]
  where the first one follows from \eqref{bas1} and its proof, while
  the second one follows from the trivial estimate $\| \tilde
  P_{\ka_1} \psi_1 \|_{L^\infty_{t,x}} \ls 2^{k_1} \| \tilde P_{\ka_1}
  \psi_1 \|_{L^\infty_t L^2_x}$.

  Bringing together the two inequalities we obtain:
  \begin{align*}
    & \| \la \tilde P_{\ka_1} \psi_1, \beta \tilde P_{\ka_2} \psi_2
    \ra \beta
    \sum_{\ka_3 \in \Ka_l^{***}} P_{\ka_3} Q_{\prec k_3-2l}  \psi_{3} \|_{L^p_tL^2_x}  \\
    \ls & 2^{-\frac{l}2} 2^{(\frac78 - \frac1{2p})k_1} \| \tilde
    P_{\ka_1} \psi_1 \|_{\lS^+_{k_1}} 2^{(\frac98-\frac1{2p})k_2} \|
    \tilde P_{\ka_2} \psi_2 \|_{\lS^+_{k_2}} \| \psi_3 \|_{S^+_{k_3}},
  \end{align*}
  At this time we can perform the summation with respect to the
  decomposition of $\psi_3$ in \eqref{decp3} to obtain:
  \begin{align*}
    & \|  \la \tilde P_{\ka_1}  \psi_1, \beta \tilde P_{\ka_2} \psi_2 \ra \beta  Q_{\prec k_3-2l}  \psi_{3} \|_{L^p_tL^2_x}  \\
    \ls & 2^{-\frac{l}2} 2^{(\frac78 - \frac1{2p})k_1} \| \tilde
    P_{\ka_1} \psi_1 \|_{\lS^+_{k_1}} 2^{(\frac98-\frac1{2p})k_2} \|
    \tilde P_{\ka_2} \psi_2 \|_{\lS^+_{k_2}} \| \psi_3 \|_{S^+_{k_3}},
  \end{align*}
  To this estimate we add the high modulation component estimate in
  \eqref{highp3} to conclude with
  \begin{align*}
    & \|  \la \tilde P_{\ka_1}  \psi_1, \beta \tilde P_{\ka_2} \psi_2 \ra \beta  \psi_{3} \|_{L^p_tL^2_x}  \\
    \ls & 2^{-\frac{l}2} 2^{(\frac78 - \frac1{2p})k_1} \| \tilde
    P_{\ka_1} \psi_1 \|_{\lS^+_{k_1}} 2^{(\frac98-\frac1{2p})k_2} \|
    \tilde P_{\ka_2} \psi_2 \|_{\lS^+_{k_2}} \| \psi_3 \|_{S^+_{k_3}},
  \end{align*}

  The cap summation with respect to $\ka_1,\ka_2 \in \Ka_l:*$ is
  performed using the $l^2$ property of the $l^2S_k$ spaces
  \eqref{l2S}:
  \begin{align*}
    & \|  \sum_{\ka_1,\ka_2 \in \Ka_l:*} \la \tilde P_{\ka_1}  \psi_1, \beta \tilde P_{\ka_2} \psi_2 \ra \beta  \psi_{3} \|_{L^p_tL^2_x}  \\
    \ls & 2^{-\frac{l}2} 2^{(\frac78 - \frac1{2p})k_1} \| \psi_1
    \|_{\lS^+_{k_1}} 2^{(\frac98-\frac1{2p})k_2} \| \psi_2
    \|_{\lS^+_{k_2}} \| \psi_3 \|_{S^+_{k_3}},
  \end{align*}
  Recall that up to this point we have used that $l < k_1+10$. If
  $l=k_1+10$, then one proceeds as above up to the point where we
  split the set $\Ka_l^{***}=S_1 \cup S_2$. The modification in this
  case is that we simply retain only the $S_1$ component which is now
  characterized by $d(\ka_3,\ka_1) \ls 2^{-k_1}$ and estimate as above
  to obtain
  \begin{align*}
    & \|  \sum_{\ka_1,\ka_2 \in \Ka_l:*} \la \tilde P_{\ka_1}  \psi_1, \beta \tilde P_{\ka_2} \psi_2 \ra \beta  \psi_{3} \|_{L^p_tL^2_x}  \\
    \ls & 2^{-\frac{l}2} 2^{(\frac78 - \frac1{2p})k_1} \| \psi_1
    \|_{S^+_{k_1}} 2^{(\frac98-\frac1{2p})k_2} \| \psi_2
    \|_{S^+_{k_2}} \| \psi_3 \|_{S^+_{k_3}},
  \end{align*}
  where $l=k_1+10$. Finally, the summation with respect to $l$ is done
  using the factor $2^{-\frac{l}{2}}$:
  \begin{align*}
    \| \la \psi_1, \beta \psi_2 \ra \beta \psi_{3} \|_{L^p_tL^2_x}
    & \ls  2^{(\frac78 - \frac1{2p})k_1} \| \psi_1 \|_{S_{k_1}}   2^{(\frac98-\frac1{2p})k_2} \| \psi_2 \|_{S_{k_2}}   \| \psi_3 \|_{S_{k_3}} \\
    & \ls 2^{(\frac38 - \frac1{2p})k_1} 2^{(\frac58-\frac1{2p})k_2}
    2^{-\frac{k_3}2} \prod_{j=1}^3 2^{\frac{k_j}2} \| \psi_j
    \|_{S^{+}_{k_j}}
  \end{align*}
  from which \eqref{TRI1} follows.
\end{proof}

\begin{lem}\label{lem:TRI2} Assume $ k_1 \leq \min(k_2,k_3)$ and each
  $\psi_i$ is supported at frequency $2^{k_i}, i=1,2,3$. For any $2
  \leq p \leq \infty$ and any choice of signs $s_i \in \{\pm\}, i =
  1,2,3$, the following estimate holds true:
   \begin{equation} \label{tris2} \begin{split} &\| \Pi_{s_1}(D) \psi_1
      \la \Pi_{s_2}(D) \psi_2,\beta \Pi_{s_3}(D) \psi_3 \ra
      \|_{L^p_t L^1_x} \\
      \ls{}& 2^{(1-\frac1p)k_1} \| \psi_1 \|_{S^{s_1}_{k_1}} \| \psi_2
      \|_{S^{s_2}_{k_2}} \| \psi_3 \|_{S_{k_3}^{s_3,w}}.
    \end{split}
  \end{equation}

  \begin{proof} Note that \eqref{tris2} follows
    from 
    \begin{equation} \label{tris}
     \| \Pi_{s_1}(D) \psi_1 \la \Pi_{s_2}(D) \psi_2,\beta \Pi_{s_3}(D) \psi_3 \ra
        \|_{L^2_t L^1_x}
        \ls 2^{\frac{k_1}2} \| \psi_1 \|_{S^{s_1}_{k_1}} \| \psi_2
        \|_{S^{s_2}_{k_2}} \| \psi_3 \|_{S_{k_3}^{s_3,w}},
    \end{equation}
    by interpolating with the trivial estimate:
    \begin{align*}
      \| \psi_1 \la \psi_2,\beta\psi_3 \ra \|_{L^\infty_t L^1_x} & \ls
      \| \psi_1 \|_{L^\infty_{t,x}} \| \psi_2 \|_{L^\infty_t L^2_x} \|
      \psi_3
      \|_{L^\infty_t L^2_x} \\
      & \ls 2^{k_1} \| \psi_1 \|_{S^{s_1}_{k_1}} \| \psi_2
      \|_{S^{s_2}_{k_2}} \| \psi_3 \|_{S^{s_3,w}_{k_3}}.
    \end{align*}
    Therefore the rest of this proof is concerned with
    \eqref{tris}. The argument carries some similarities with the one
    used in Lemma \ref{lem:TRI1}.  In particular we extract the gains
    from the null condition as explained in the body of that proof and
    skip the formalization here. We provide a complete argument for
    the $\Pi_+(D)$ part of each term, that is we assume $\psi_i =
    \Pi_+(D) \psi_i, \forall i \in \{1,2,3\}$.  A similar argument
    works for the other combinations.
    We decompose
    \begin{equation} \label{dec15} \psi_1 \la \psi_2, \beta \psi_3 \ra
      = \sum_{0 \leq l \leq k+10} \sum_{\ka_1 \in \Ka_l} P_{\ka_1}
      \psi_1 \sum_{i=2}^3 \sum_{\ka_2, \ka_3 \in \Ka^2_{l}(\ka_1,i)}
      \la P_{\ka_2} \psi_2, \beta P_{\ka_3} \psi_3 \ra
    \end{equation}
    where $\Ka_{l}^2(\ka_1,2)=\{(\ka_2,\ka_3) \in \Ka_l \times \Ka_l:
    2^{-l+3} \leq d(\ka_1,\ka_2) \leq 2^{-l+6}, d(\ka_1,\ka_3) \leq
    2^{-l+6} \}$ and $\Ka_{l}^2(\ka_1,3)=\{(\ka_2,\ka_3) \in \Ka_l
    \times \Ka_l: 2^{-l+3} \leq d(\ka_1,\ka_3) \leq 2^{-l+6},
    d(\ka_1,\ka_2) \leq 2^{-l+6} \}$ for $l < k_1+10$ while for
    $l=k_1+10$ we pick $\Ka^2_{l}(\ka_1,2)=\Ka^2_{l}(\ka_1,3)=
    \{(\ka_2,\ka_3) \in \Ka_l \times \Ka_l: d(\ka_1,\ka_3) \leq
    2^{-l+6}, d(\ka_1,\ka_2) \leq 2^{-l+6} \}$. As defined, these sets
    are not disjoint, so we (implicitly) remove elements which are
    counted multiple times.
 
    We fix $0 \leq l < k_1+10$, $\ka_1 \in \Ka_l$ and aim to estimate
    \[
    \sum_{\ka_1 \in \Ka_l} P_{\ka_1} \psi_1 \sum_{\ka_2, \ka_3 \in
      \Ka^2_{l}(\ka_1,2)} \la P_{\ka_2} \psi_2, \beta P_{\ka_3} \psi_3
    \ra
    \]
    Notice that, given the structure of the set $\Ka^2_{l}(\ka_1,2)$,
    for all $\ka_2,\ka_3 \in \Ka^2_{l}(\ka_1,2)$ we have
    $d(\ka_2,\ka_3) \ls 2^{-l}$ and this allows us to book the gain of
    $2^{-l}$ from the null condition as explained in Lemma
    \ref{lem:TRI1}. Combining this with the fact that in the above sum
    we have $2^{-l+3} \leq d(\ka_1,\ka_2) \leq 2^{-l+6}$ we invoke
    \eqref{bas1} to obtain
    \[
    \begin{split}
      & \| \sum_{\ka_1 \in \Ka_l} P_{\ka_1} \psi_1 \sum_{\ka_2, \ka_3
        \in \Ka^2_{l}(\ka_1,2)} \la P_{\ka_2} \psi_2, \beta
      P_{\ka_3} \psi_3 \ra \|_{L^2_t L^1_x} \\
      \ls & 2^{-l} 2^{\frac{k_1+l}2}
      \| \psi_1 \|_{S^+_{k_1}}  \| \psi_2 \|_{S^+_{k_2}}  \sup_{\ka_3} \| P_{\ka_3} \psi_3 \|_{L^\infty_t L^2_x} \\
      \ls & 2^{\frac{k_1-l}2} \| \psi_1 \|_{S^+_{k_1}} \| \psi_2
      \|_{S^+_{k_2}} \| \psi_3 \|_{S_{k_3}^{+,w}}.
    \end{split}
    \]
    A similar argument gives
    \[
      \| \sum_{\ka_1 \in \Ka_l} P_{\ka_1} \psi_1 \sum_{\ka_2, \ka_3
        \in \Ka^2_{l}(\ka_1,3)} \la P_{\ka_2} \psi_2, \beta
      P_{\ka_3} \psi_3 \ra \|_{L^2_t L^1_x} 
      \ls  2^{\frac{k_1-l}2} \| \psi_1 \|_{S^+_{k_1}} \| \psi_2
      \|_{S^+_{k_2}} \| \psi_3 \|_{S_{k_3}^{+,w}}.
    \]
    If $l=k_1+10$ then we proceed as above in the case of
    $\Ka^2_{l}(\ka_1,2)$ since $\psi_2$ comes with the stronger
    structure $S^+_{k_2}$.

    To conclude with \eqref{tris} we need to perform the summation
    with respect to $l$ in \eqref{dec15}; this is trivially done using
    the power of $2^{-\frac{l}2}$.
  \end{proof}
  
\end{lem}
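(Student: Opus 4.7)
The plan is to reduce the $L^p_t L^1_x$ bound to the endpoint $p=2$ case by interpolation. A direct application of Hölder and Bernstein on $\psi_1$ gives
\[
\| \Pi_{s_1}(D) \psi_1 \la \Pi_{s_2}(D) \psi_2, \beta \Pi_{s_3}(D) \psi_3 \ra \|_{L^\infty_t L^1_x} \ls 2^{k_1} \|\psi_1\|_{L^\infty_t L^2_x} \|\psi_2\|_{L^\infty_t L^2_x} \|\psi_3\|_{L^\infty_t L^2_x},
\]
which is the trivial endpoint. Interpolating this with the $p=2$ estimate
\[
\| \Pi_{s_1}(D) \psi_1 \la \Pi_{s_2}(D) \psi_2,\beta \Pi_{s_3}(D) \psi_3 \ra \|_{L^2_t L^1_x} \ls 2^{\frac{k_1}{2}} \| \psi_1 \|_{S^{s_1}_{k_1}} \| \psi_2 \|_{S^{s_2}_{k_2}} \| \psi_3 \|_{S_{k_3}^{s_3,w}} \tag{$\star$}
\]
produces \eqref{tris2} with the correct weight $2^{(1-1/p)k_1}$. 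So the work is concentrated in proving $(\star)$.

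For $(\star)$ I would run an angular dyadic decomposition at all scales $0 \le l \le k_1+10$. Write
\[
\psi_1 \la \psi_2, \beta \psi_3 \ra = \sum_{l=0}^{k_1+10} \sum_{(\ka_1,\ka_2,\ka_3) \in \Ka_l^3} P_{\ka_1}\psi_1 \la P_{\ka_2}\psi_2, \beta P_{\ka_3}\psi_3\ra,
\]
restricting the sum so that at scale $l$ at least two of the three caps are at angular distance $\approx 2^{-l}$. Using \eqref{PiPi} together with Lemma \ref{stable} ii), the inner product $\la P_{\ka_2}\psi_2, \beta P_{\ka_3}\psi_3 \ra$ contributes a null-structure factor of $2^{-l}$ provided $\dist(\ka_2, \pm\ka_3) \ls 2^{-l}$ (the sign being dictated by $s_2,s_3$). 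The natural partition of the admissible cap triples is into two families: one where $\dist(\ka_1,\ka_2) \approx 2^{-l}$, and the symmetric one where $\dist(\ka_1,\ka_3) \approx 2^{-l}$; in both cases the triangle inequality together with the null constraint forces all three caps to lie within $O(2^{-l})$ of one another.

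On each family I would apply Hölder $L^2_t L^1_x \le L^2_{t,x} \cdot L^\infty_t L^2_x$, placing $\psi_3$ (for which only the weak norm $S^{s_3,w}_{k_3}$ is available) in $L^\infty_t L^2_x$, and treating the bilinear object $P_{\ka_1}\psi_1 \cdot P_{\ka_j}\psi_j$, with $j \in \{2,3\}$ depending on the family, in $L^2_{t,x}$ by the core bilinear estimate \eqref{bas1} from the proof of Proposition \ref{Pbil}. This gives the factor $2^{(k_1+l)/2}$ which, combined with the null gain $2^{-l}$, produces $2^{(k_1-l)/2}$ at each scale $l$; the cap sums are controlled by the square-summability properties recorded in Remark \ref{rmk:cap-sum}. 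Summing the geometric series $\sum_l 2^{-l/2}$ yields the desired factor $2^{k_1/2}$, establishing $(\star)$.

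The main obstacle is the asymmetry between $S$ and $S^w$: Proposition \ref{Pbil} (specifically \eqref{bas1}) allows the weak norm on at most one of the two factors, which forces $\psi_3$ to always sit in the $L^\infty_t L^2_x$ slot and not in the bilinear $L^2_{t,x}$ pairing. The other delicate point is bookkeeping of the null gain: the partition of the cap triples must be arranged so that the inner product always pairs two caps known to be angularly close, which is where the case split on $(\ka_1,\ka_2)$ versus $(\ka_1,\ka_3)$ enters.
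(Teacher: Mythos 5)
Your skeleton is the same as the paper's: reduce to the $p=2$ endpoint \eqref{tris} by H\"older in time against the trivial $L^\infty_t L^1_x$ bound, decompose in angle over scales $0\le l\le k_1+10$ so that at each scale one of $\ka_2,\ka_3$ is separated from $\ka_1$ by $\approx 2^{-l}$ and the third cap is within $O(2^{-l})$, extract the null gain $2^{-l}$ from \eqref{PiPi} and Lemma \ref{stable} ii), estimate via H\"older $L^2_tL^1_x\le L^2_{t,x}\cdot L^\infty_tL^2_x$ with \eqref{bas1} on the bilinear piece, and sum the scales using $2^{-l/2}$. However, your handling of the weak norm on $\psi_3$ contains a genuine error, and your own two statements about it contradict each other: you first allow the bilinear object to be $P_{\ka_1}\psi_1\cdot P_{\ka_j}\psi_j$ with $j\in\{2,3\}$ ``depending on the family'', but then assert that \eqref{bas1} ``forces $\psi_3$ to always sit in the $L^\infty_t L^2_x$ slot and not in the bilinear $L^2_{t,x}$ pairing''. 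The latter is a misreading of \eqref{bas1}: that estimate requires the strong norm only on the \emph{lower}-frequency factor $u_1$ and already tolerates the weak norm $S^{+,w}$ on the higher-frequency factor $u_2$. Since $k_1\le k_3$, pairing $\psi_1$ (strong norm) with $\psi_3$ (weak norm) inside \eqref{bas1} is perfectly legitimate, and it is exactly what is needed.

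Indeed, if you follow your stated rule literally, the family in which $\dist(\ka_1,\ka_3)\approx 2^{-l}$ while $\ka_2$ lies within $O(2^{-l})$ of $\ka_1$ breaks down: there $\ka_1$ and $\ka_2$ need not be angularly separated at scale $2^{-l}$ (they may essentially coincide), so the scale-$l$ case of \eqref{bas1} does not apply to $P_{\ka_1}\psi_1\cdot P_{\ka_2}\psi_2$; the best $L^2_{t,x}$ bound available for such nearly parallel interactions is the finest-scale one, of size $\approx 2^{k_1}$, and together with the null gain $2^{-l}$ the sum over $l$ then yields $\approx 2^{k_1}$ instead of the required $2^{\frac{k_1}2}$, i.e.\ a loss of $2^{\frac{k_1}2}$. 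The paper resolves precisely this point: for the family $\Ka^2_{l}(\ka_1,3)$ it applies \eqref{bas1} to $P_{\ka_1}\psi_1\cdot P_{\ka_3}\psi_3$ (strong norm on $\psi_1$, weak norm on $\psi_3$) and puts $\psi_2$, which carries the strong norm and hence controls $L^\infty_tL^2_x$, in the remaining slot; only at the parallel scale $l=k_1+10$ does one keep $\psi_2$ next to $\psi_1$, ``since $\psi_2$ comes with the stronger structure $S^+_{k_2}$''. With this correction your argument coincides with the paper's proof; without it, one of the two families is not estimated.
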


 \section{The Dirac nonlinearity}\label{sect:dirac}
 The main result of this section is the following
 \begin{thm} \label{thm:thnon} Choose $s_1,s_2,s_3,s_4 \in \{ +,
   -\}$. Then, for all $\psi_k \in S^{s_k,\frac12}$ satisfying
   $\psi_k=\Pi_{s_k}(D) \psi_k$ for $k=1,2,3$, we have
   \begin{equation} \label{cun} \| \Pi_{s_4}(D) ( \la \psi_1, \beta
     \psi_2 \ra \beta \psi_3) \|_{N^{s_4,\frac12}} \ls \|
     \psi_1\|_{S^{s_1,\frac12}} \| \psi_2 \|_{S^{s_2,\frac12}} \|
     \psi_3 \|_{S^{s_3,\frac12}}.
   \end{equation}
 \end{thm}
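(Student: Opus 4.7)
The plan is a Littlewood--Paley decomposition $\psi_j = \sum_{k_j\geq 89}\psi_{j,k_j}$, together with a dyadic decomposition of the output, reducing \eqref{cun} to
\[
2^{k_4/2}\bigl\| P_{k_4}\Pi_{s_4}(D)\bigl(\la\psi_{1,k_1},\beta\psi_{2,k_2}\ra\beta\psi_{3,k_3}\bigr)\bigr\|_{N^{s_4}_{k_4}} \ls C(k_1,k_2,k_3,k_4) \prod_{j=1}^3 2^{k_j/2}\|\psi_{j,k_j}\|_{S^{s_j}_{k_j}}
\]
with a Schur-summable kernel $C$; summation then closes by Cauchy--Schwarz and the $\ell^2$-cap-summability of Remark \ref{rmk:cap-sum}. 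The low-frequency regime (all four indices $\leq 99$) is handled separately using the full family of Strichartz estimates from Corollary \ref{cor:full-str} in $L^{4/3}_{t,x}$.

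For the output norm, by the duality embedding \eqref{dual} it suffices to pair against a test function $\phi\in(N^{s_4,at}_{k_4})^{*}\hookrightarrow S^{s_4,w}_{k_4}$ and estimate the resulting quadrilinear $L^1_{t,x}$ integral. Based on the relative sizes of $k_1,k_2,k_3$ I would split into three regimes: \emph{(HL)} a single frequency dominates and $k_4\sim \max k_j$; \emph{(HH)} $|k_1-k_2|\leq 10$ with $k_1\gg k_3$, so the output can be at any $k_4\ls k_1$; and \emph{(HHH)} $k_1\sim k_2\sim k_3\sim k_4$. In each regime I would pair up two of the four factors and apply the bilinear $L^2$ estimate of Proposition \ref{Pbil} with a suitable cap localization, extracting the angular gain from the null structure \eqref{PiPi} via Lemma \ref{stable}(ii); the remaining factor is bounded in $L^\infty_tL^2_x$. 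Non-parallel HHH and HL interactions follow this template and produce gains of the form $2^{-|k_i-k_j|/2}$ sufficient to sum the Schur kernel.

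The main obstacle is the \emph{parallel interactions} regime excluded from \eqref{bil2}: when $|k_1-k_2|\leq 10$, the two highest frequency factors cannot be combined on the subcollection $\dist(\ka_1,\ka_2)\leq 2^{-k_2+3}$. Here I would first attempt to recombine so that one of $\psi_1,\psi_2$ pairs instead with $\psi_3$ via \eqref{bil3}, the gain coming from the transversality of the remaining cap pair; when all three cap parameters are forced to coincide on parallel caps this recombination also fails, and I would place the output into the $L^p_tL^2_x$ component of $N^\pm_{k_4}$ with weight $2^{(\frac1p-1)k_4}$ and invoke the trilinear estimate of Lemma \ref{lem:TRI1}, which is tailored for exactly this situation and dictates the range $p\in(\tfrac43,\tfrac85)$.

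A secondary difficulty is the HL regime where the factor of smallest frequency lives inside the inner product $\la\cdot,\beta\cdot\ra$: here bilinear recombination does not directly yield the desired $N$-norm, and one invokes Lemma \ref{lem:TRI2} paired via the embedding $L^2_tL^1_x\subset (L^2_tL^\infty_x)^{*}$ with the $END^\pm_{k_4}$-component of $S^{s_4,w}_{k_4}$. Finally, the frequency summation closes thanks to the $\tfrac12$-derivative mismatch provided by the bilinear $L^2$ estimates, the Schur-type geometric factors $2^{-|k_i-k_j|/2}$ and the square-summable cap structure recorded in Remark \ref{rmk:cap-sum}.
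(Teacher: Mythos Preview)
Your overall reduction to a dyadic Schur estimate via duality is the same as the paper's, but there is a genuine gap: you never invoke the modulation dichotomy of Lemma~\ref{lem:mod}, and without it your ``pair two factors with Proposition~\ref{Pbil}, bound the rest in $L^\infty_tL^2_x$'' template does \emph{not} produce a summable kernel. Take for instance $|k_2-k_4|\leq 2$ with $k_1\leq k_3\leq k_4-10$ (the paper's Case~5). Applying \eqref{bil2} to both $\la\psi_1,\beta\psi_2\ra$ and $\la\psi_3,\beta\psi_4\ra$ gives exactly
\[
I \ls 2^{k_1/2}\,2^{k_3/2}\,\|\psi_1\|_{S_{k_1}}\|\psi_2\|_{S_{k_2}}\|\psi_3\|_{S_{k_3}}\|\psi_4\|_{S^w_{k_4}} \approx J,
\]
i.e.\ $G(\mathbf{k})\approx 1$ with no decay in $k_1-k_3$; this is not Schur-summable. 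The paper's remedy is the dichotomy: either the angle in $\la\psi_3,\beta\psi_4\ra$ is $\ls 2^{(k_1-k_3)/16}$ (then \eqref{bil3} gives the missing gain), or some factor has modulation $\gs 2^{(k_1+7k_3)/8}$ (then one uses the $\dot X^{\pm,1/2,\infty}$ or the $L^p_tL^2_x$ high-modulation bound built into $S^\pm_k$). This same device drives Cases~4--7; your proposal has no substitute for it. Recombining pairs does not help here either, since all four possible pairings are of the same $(\text{low})\times(\text{high})$ type.

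A second, structural issue: the norm $\|f\|_{N^\pm_k}=\|f\|_{N^{\pm,at}_k}+2^{(1/p-1)k}\|f\|_{L^p_tL^2_x}$ is a \emph{sum}, not an infimum, so you cannot ``place the output into the $L^p_tL^2_x$ component'' as an alternative to bounding the atomic part. Both pieces must be controlled for \emph{every} frequency configuration; the paper does this by proving Lemma~\ref{lem:cunn1} (the $L^p_tL^2_x$ bound, for all $\mathbf{k}$) and Lemma~\ref{lem:cunn23} (the duality bound) separately. In particular Lemma~\ref{lem:TRI1} is used in Lemma~\ref{lem:cunn1} for the regime $k_3\gg k_1,k_2$, not as a fallback for parallel caps in the duality argument. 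Likewise Lemma~\ref{lem:TRI2} enters only in conjunction with the modulation dichotomy (Case~7, high modulation on $\psi_1$ or $\psi_2$), not via pairing with the $END^\pm$ structure as you suggest.
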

 The rest of this section is devoted to the proof of Theorem
 \ref{thm:thnon} and the proof of our main result Theorem
 \ref{thm:main}, which is organized similarly to \cite[Section 6]{BH}.
 The estimate \eqref{cun} will be derived from similar estimates for
 frequency localized functions. Our aim will be to identify a function
 $G(\mathbf{k}) : \N^4_{\geq 89} \rightarrow (0,\infty)$ such that
 \begin{equation} \label{G} \sum_{k_1,k_2,k_3,k_4 \in \N_{\geq 89}}
   G(\mathbf{k}) a_{k_1} b_{k_2} c_{k_3} d_{k_4} \ls \| a \|_{l^2} \|
   b \|_{l^2} \| c \|_{l^2} \| d \|_{l^2}
 \end{equation}
 for all sequences $a=(a_j)_{j \in \N_{\geq 89}}$, etc, in $l^2$.
 Here, we set $\N_{\geq 89}=\{ n \in \N : n \geq 89 \}$ and write
 $\mathbf{k}=(k_1,k_2,k_3,k_4)$.

 With these notations, the result of Theorem \ref{thm:thnon} follows
 from
 \begin{pro} \label{pro:thnon} There exists a function $G$ satisfying
   \eqref{G} such that if $\psi_j$ are localized at frequency
   $2^{k_j}$, $k_j \geq 89$ and $\psi_j = \Pi_{s_j}(D) \psi_j$ for
   $j=1,2,3$, then the following holds true
   \begin{equation} \label{cunn} 2^{\frac{k_4}2} \| P_{k_4}
     \Pi_{s_4}(D) ( \la \psi_1, \beta \psi_2 \ra \beta \psi_3)
     \|_{N^{s_4}_{k_4}} \ls G(\mathbf{k}) \prod_{j=1}^3
     2^{\frac{k_j}2} \| \psi_j \|_{S^{s_j}_{k_j}},
   \end{equation}
   for any choice of sign $s_1,s_2,s_3,s_4 \in \{ +, -\}$.
 \end{pro}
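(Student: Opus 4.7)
The plan is to dualize the atomic component of $\|\cdot\|_{N^{s_4}_{k_4}}$ via the embedding \eqref{dual} into $S^{s_4,w}_{k_4}$, and to estimate the $L^p_tL^2_x$ component (with weight $2^{(\frac1p-1)k_4}$) directly through the trilinear Lemma~\ref{lem:TRI1}. Taking $p$ in the interval $(\frac43,\frac85)$ yields strictly negative exponents in \eqref{TRI1} on each of the three frequency differences, hence a summable contribution to $G(\mathbf{k})$. For the duality step we use that $\Pi_{s_4}(D)$ is self-adjoint and restrict to test functions $\phi_4=\Pi_{s_4}(D)\phi_4$ of unit $S^{s_4,w}_{k_4}$-norm. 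Relocating the scalar Dirac pairing gives
\[
\langle P_{k_4}\Pi_{s_4}(D)(\la\psi_1,\beta\psi_2\ra\beta\psi_3),\phi_4\rangle
=\int \la\psi_1,\beta\psi_2\ra\,\overline{\la\tilde P_{k_4}\phi_4,\beta\psi_3\ra}\,dt\,dx,
\]
and Cauchy--Schwarz in $L^2_{t,x}$ reduces everything to a product of two $L^2$-norms of Dirac bilinear forms, each amenable to Proposition~\ref{Pbil}.

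By the conjugation symmetry $\la\psi_1,\beta\psi_2\ra=\overline{\la\psi_2,\beta\psi_1\ra}$, we assume $k_1\leq k_2$ throughout. The analysis then splits into finitely many frequency configurations sorted by which two of $\{k_1,k_2,k_3,k_4\}$ are the largest. In each configuration the grouping in the Cauchy--Schwarz split is chosen so that the high frequency in each of the two bilinear factors is paired with a lower-frequency input. One factor, featuring a genuine high$\times$low interaction, is estimated by \eqref{bil2} in its $S^{\pm,w}$-form, producing a $2^{k_{\min}/2}$ factor; the other factor, in which one input is high frequency and the other has a comparable or slightly smaller frequency, is decomposed into caps $\ka\in\mathcal{K}_l$ and treated via \eqref{bil3}. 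The summability in $l$, combined with Remark~\ref{rmk:cap-sum} which permits summing cap-localized norms in $\ell^2$, and with the null-structure gain $2^{-l}$ coming from \eqref{PiPi}, produces the exponential frequency-difference decay that defines $G(\mathbf{k})$.

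The principal obstacle is the \emph{parallel interaction} regime, in which a bilinear pairing joins two inputs of comparable frequencies whose Fourier supports are near-parallel; Proposition~\ref{Pbil}~iii) rules out the direct use of \eqref{bil2}--\eqref{bil3} there. We circumvent this by invoking Proposition~\ref{Pbil}~iv), which holds uniformly over parallel configurations without any subtraction, at the cost of requiring \emph{both} inputs to be controlled in the strong $S^\pm_k$-norm rather than in the weaker $S^{\pm,w}_k$. In practice this forces us to route the test function $\phi_4\in S^{s_4,w}_{k_4}$ into the non-parallel bilinear factor whenever possible; when both factors are simultaneously parallel we abandon the duality route and estimate the trilinear expression directly in $L^p_tL^2_x$ via Lemma~\ref{lem:TRI1}, whose internal case analysis already accommodates the parallel configuration through the $\mathcal{K}_l^{***}$ splitting used in its proof. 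Once all cases are collected, the resulting $G(\mathbf{k})$ decays exponentially in every frequency difference, and its $\ell^2$-summability \eqref{G} follows from Schur's test.
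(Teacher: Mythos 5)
Your overall architecture coincides with the paper's: bound the $L^p_tL^2_x$ component of $N^{s_4}_{k_4}$ by a trilinear estimate, and bound the atomic component by dualizing against $S^{s_4,w}_{k_4}$ through \eqref{dual}, which reduces matters to a quadrilinear bound of the type \eqref{cunn2}. The genuine gap is in how you propose to close that quadrilinear bound. Cauchy--Schwarz followed by Proposition \ref{Pbil} on each pairing does not produce a $G(\mathbf{k})$ satisfying \eqref{G}. Consider, e.g., $|k_2-k_4|\leq 2$ and $k_1,k_3\leq k_4-10$ (Case 5 of the paper's proof): grouping $(\psi_1,\psi_2)$ and $(\psi_3,\psi_4)$ and applying \eqref{bil2} gives at best $2^{\frac{k_1}2}2^{\frac{k_3}2}$ times the product of norms, which is exactly the right-hand side $J$ with $G(\mathbf{k})\approx 1$ on this entire region; a $G$ that is $\approx 1$ whenever the two largest frequencies are comparable, with no decay in $k_3-k_1$, is not summable in the sense of \eqref{G}. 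Your assertion that the null-structure gain $2^{-l}$ from \eqref{PiPi} together with cap summation ``produces the exponential frequency-difference decay'' is not correct: summing the angular gain over $l$ merely reproduces \eqref{bil2}; it yields off-diagonal decay only when the interaction angle is \emph{forced} to be small. That forcing is precisely the missing ingredient, namely the modulation--angle dichotomy based on Lemma \ref{lem:mod}: since the space-time integral selects the zero mode, the two pairings must carry comparable modulation, so either one pairing interacts at an angle $\ls 2^{c(k_{\min}-k_{\max})}$ (and then \eqref{bil3} supplies the gain), or some factor has modulation $\gs 2^{(k_{\min}+7k_{\max})/8}$, in which case one must leave the bilinear framework and use the $\dot X^{\pm,\frac12,\infty}$ and $L^p_tL^2_x$ components of the norms together with Bernstein/Strichartz bounds (and Lemma \ref{lem:mod} ii) for the mixed-sign cases). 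None of this appears in your plan, and without it \eqref{cunn2} with a summable $G$ cannot be obtained.

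Two further problems. First, your treatment of parallel interactions fails exactly where it is needed: when the near-parallel pairing is the one containing the dual function (e.g.\ $|k_3-k_4|\leq 9$ with $k_3,k_4$ the high frequencies), Proposition \ref{Pbil} iv) is unavailable because $\psi_4$ carries only the weak norm, and you cannot ``route the test function into the non-parallel factor''---the pairing $\la \psi_3,\beta \Pi_{s_4}(D)P_{k_4}\psi_4\ra$ is dictated by the duality identity; nor does ``abandoning duality'' and invoking Lemma \ref{lem:TRI1} help, since an $L^p_tL^2_x$ bound does not control the atomic part of the $N_{k_4}$ norm. The paper instead estimates $I_{par}$ either by cap-localized $L^3_tL^6_x$ Strichartz bounds with $\ell^2$ cap summation, or by recombining the pairings (giving up the null structure) and using \eqref{bas1}. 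Second, Lemma \ref{lem:TRI1} is stated only for $k_1\leq k_2\leq k_3$, so it does not by itself cover the $L^p_tL^2_x$ component when $\psi_3$ is not the highest-frequency factor; those orderings require separate arguments (Strichartz combined with \eqref{eq:interp-bil2}), as in the paper's Lemma \ref{lem:cunn1}.
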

 We break this down into two building blocks:
 \begin{lem}\label{lem:cunn1}
   Under the assumptions of Proposition \ref{pro:thnon} the following
   estimate holds true for any $\frac43 < p \leq 2$:
   \begin{equation} \label{cunn1} 2^{(\frac1p - \frac12)k_4} \|
     P_{k_4}\Pi_{s_4}(D) ( \la \psi_1, \beta \psi_2 \ra \beta \psi_3)
     \|_{L^p_t L^2_x} \ls G(\mathbf{k}) \prod_{j=1}^3 2^{\frac{k_j}2}
     \| \psi_j \|_{S^{s_j}_{k_j}}.
   \end{equation}
 \end{lem}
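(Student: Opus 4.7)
The plan is to deduce \eqref{cunn1} directly from the trilinear estimate of Lemma~\ref{lem:TRI1} and then verify that the resulting weight satisfies the summability condition \eqref{G}. First I would use that $P_{k_4}\Pi_{s_4}(D)$ is a Fourier multiplier which is bounded on $L^p_t L^2_x$, uniformly in $k_4$, to reduce \eqref{cunn1} to bounding $\|\la\psi_1,\beta\psi_2\ra\beta\psi_3\|_{L^p_t L^2_x}$. I would then relabel the input frequencies so that $k^*_1\le k^*_2\le k^*_3$ is their non-decreasing rearrangement. Using the Hermitian symmetry $\la\psi_a,\beta\psi_b\ra=\overline{\la\psi_b,\beta\psi_a\ra}$, one may freely interchange the two factors inside the bracket in $L^p$-type norms, so it suffices to put ourselves in the setting of Lemma~\ref{lem:TRI1} as stated. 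When the outer factor $\psi_3$ does not carry the top frequency, I would rerun the proof of Lemma~\ref{lem:TRI1} with the roles of $\psi_3$ and the top-frequency inner factor exchanged; this is possible because the null structure \eqref{PiPi} can equally well be extracted from whichever high-high pair is present. Alternatively the identity $\Pi_{s}(D)\beta=\beta(\Pi_{-s}(D)\mp\beta/\la D\ra)$ from Section~\ref{sect:setupD} can be used to swap a projection past~$\beta$ and reinstate the bracket form.

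With Lemma~\ref{lem:TRI1} in hand (in the rearranged variables), one obtains
\[
\|\la\psi_1,\beta\psi_2\ra\beta\psi_3\|_{L^p_t L^2_x}\ls 2^{-(\frac1p-\frac12)k^*_3}\,2^{(\frac38-\frac1{2p})(k^*_1-k^*_2)}\,2^{(1-\frac1p)(k^*_2-k^*_3)}\prod_{j=1}^{3}2^{k_j/2}\|\psi_j\|_{S^{s_j}_{k_j}},
\]
and multiplying by $2^{(\frac1p-\frac12)k_4}$ suggests the choice
\[
G(\mathbf{k})=2^{(\frac1p-\frac12)(k_4-k^*_3)}\,2^{(\frac38-\frac1{2p})(k^*_1-k^*_2)}\,2^{(1-\frac1p)(k^*_2-k^*_3)}.
\]
For any $p\in(\tfrac43,\tfrac85)$ each of the three exponents $\tfrac1p-\tfrac12$, $\tfrac38-\tfrac1{2p}$, $1-\tfrac1p$ is strictly positive, so all three factors decay geometrically in the corresponding frequency gaps; the Fourier-support constraint $k_4\le k^*_3+C$ controls the sign of the first exponent.

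To check \eqref{G} I would partition the sum over $\mathbf{k}=(k_1,k_2,k_3,k_4)$ into the six sectors corresponding to the orderings of $(k_1,k_2,k_3)$. In each sector, I first sum a geometric series in $k_4\le k^*_3+C$, absorbing the factor $2^{(\frac1p-\frac12)(k_4-k^*_3)}$ against the sequence $d_{k_4}$ by Cauchy--Schwarz in $k_4$; I then perform two further applications of Cauchy--Schwarz / Schur's test against the ordered triple, controlled respectively by the decay in $k^*_2-k^*_3$ and in $k^*_1-k^*_2$. This is a standard multilinear $\ell^2$-summation argument that works whenever the three exponents are all strictly positive, which is exactly the condition singling out $p\in(\tfrac43,\tfrac85)$.

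The main obstacle is the part of Step~2 that concerns the configurations in which the outer factor $\psi_3$ does not carry the highest frequency, since Lemma~\ref{lem:TRI1} is stated with the top-frequency function placed outside the bracket. The variant needed for those cases must recover the \emph{same} three exponents by an analogous proof, and some care is required to ensure that the null structure \eqref{PiPi} is still extracted from the correct pair (and that the disposability of $Q_{\prec k-2l}$-type projections used inside the proof of Lemma~\ref{lem:TRI1} remains valid after the role swap). Once this is verified, the remainder of the argument is the bookkeeping described in the previous paragraph.
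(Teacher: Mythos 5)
There is a genuine gap. Your whole argument funnels through a rearranged version of Lemma~\ref{lem:TRI1} in which the top frequency may sit \emph{inside} the bracket, and this variant is neither proved nor reducible to the stated lemma by the devices you invoke. The Hermitian symmetry of $\la\cdot,\beta\cdot\ra$ only lets you swap $\psi_1\leftrightarrow\psi_2$; it does not let you exchange the outer factor $\psi_3$ with a bracket factor, and the identity $\Pi_\pm(D)\beta=\beta(\Pi_\mp(D)\mp\beta/\la D\ra)$ relates projections but does not restructure $\la\psi_1,\beta\psi_2\ra\beta\psi_3$ into a bracket of a different pair. Consequently the null gain \eqref{PiPi} is available \emph{only} for the pair $(\psi_1,\psi_2)$ actually paired in the bracket (your phrase ``whichever high-high pair is present'' is off the mark: in Lemma~\ref{lem:TRI1} the gain comes from the two \emph{lowest} frequencies, precisely because they form the bracket). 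Moreover, ``rerunning the proof with roles exchanged'' is not a symmetry operation: the modulation threshold $Q_{\succeq k_3-2l}$ is tied to the outer top-frequency factor, and the recombination step via \eqref{bas1} produces a gain $2^{(k_{\min}+l')/2}$ attached to the minimum frequency of the recombined pair, so when the top frequency moves into the bracket the numerology changes and ``the same three exponents'' is exactly what would have to be re-derived; you flag this as the main obstacle but do not close it.

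For comparison, the paper does not need any such rearranged trilinear lemma. After the harmless reduction $k_1\le k_2$, it invokes Lemma~\ref{lem:TRI1} only in the regime $k_2+20\le k_3$ (where its hypotheses hold verbatim), and in the remaining regimes ($k_3\le k_1+20$ and $k_1+20\le k_3\le k_2+20$) it argues much more softly: H\"older in time, the bilinear null-form $L^2$ bound \eqref{bil2} or its interpolate \eqref{eq:interp-bil2} applied to the bracket $\la\psi_1,\beta\psi_2\ra$, and Strichartz bounds from Corollary~\ref{cor:full-str} on $\psi_3$, which already yield off-diagonal decay summable in the sense of \eqref{G} for $\frac43<p<\frac85$. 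Your $\ell^2$-summation scheme at the end is fine (and, as it happens, your target weight $G$ coincides with what the paper obtains in the middle regime), but as written the trilinear input for the two configurations in which $\psi_3$ is not the dominant frequency is missing, so the proof is incomplete; the fix is either to prove the rearranged trilinear estimate from scratch or, more economically, to replace it by the bilinear-plus-Strichartz argument above.
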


\begin{lem}\label{lem:cunn23}
  Under the assumptions of Proposition \ref{pro:thnon} (including now
  that $\psi_4$ are localized at frequency $2^{k_4}$ and $\psi_4 =
  \Pi_{s_4}(D) \psi_4$) the following estimate hold true:
  \begin{equation} \label{cunn2}
      \Big| \int  \la  \psi_1, \beta \psi_2 \ra \cdot \la \psi_3, \beta \psi_4 \ra dx dt\Big|
      \ls G(\mathbf{k})\prod_{j=1}^3 2^{\frac{k_j}2} \| \psi_j
      \|_{S^{s_j}_{k_j}} \cdot 2^{-\frac{k_4}2} \| \psi_4
      \|_{S^{s_4,w}_{k_4}}.
  \end{equation}
  \end{lem}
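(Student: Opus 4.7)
The plan is to apply Cauchy--Schwarz in $L^2_{t,x}$, writing
\[
|I|\leq \|\la \psi_1,\beta\psi_2\ra\|_{L^2_{t,x}}\cdot\|\la \psi_3,\beta\psi_4\ra\|_{L^2_{t,x}},
\]
and then to bound each factor by Proposition \ref{Pbil}. For the first factor both $\psi_1,\psi_2$ carry strong norms, so part iv) applies uniformly (including the parallel regime) and yields $2^{\min(k_1,k_2)/2}\|\psi_1\|_{S^{s_1}_{k_1}}\|\psi_2\|_{S^{s_2}_{k_2}}$. For the second factor $\psi_4$ carries only the weak norm: part i) handles $|k_3-k_4|\geq 10$ with factor $2^{k_3/2}\|\psi_3\|_{S^{s_3}_{k_3}}\|\psi_4\|_{S^{s_4,w}_{k_4}}$, and part iii) handles the non-parallel angular interactions in the regime $|k_3-k_4|\leq 10$ with the same factor.

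Multiplying these bounds yields an inequality of the required shape with a weight that decays like $2^{-|k_1-k_2|/2}$ coming from the $\min$--versus--sum discrepancy in the first factor, together with analogous decay from the second factor. Combined with the Fourier-support overlap constraint $\max(k_1,k_2)\approx\max(k_3,k_4)$ (automatically enforced by $I\neq 0$ via Plancherel, since the frequency support of $\la\psi_1,\beta\psi_2\ra$ is concentrated near $2^{\max(k_1,k_2)}$ whenever $|k_1-k_2|\geq 10$, and similarly for the second bracket), this produces an admissible function $G(\mathbf{k})$ satisfying \eqref{G}.

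The main obstacle is the residual parallel contribution in the case $|k_3-k_4|\leq 10$,
\[
I_{par}=\int \la\psi_1,\beta\psi_2\ra\,B_{par}\,dx\,dt,\quad B_{par}=\sum_{\substack{\ka_3,\ka_4\in\Ka_{k_4}\\ \dist(\pm\ka_3,\pm\ka_4)\leq 2^{-k_4+3}}}\la\Pi_{s_3}(D)\tilde P_{\ka_3}\psi_3,\beta\Pi_{s_4}(D)\tilde P_{\ka_4}\psi_4\ra,
\]
for which the null identities \eqref{PiPi} provide no smallness. The key geometric observation is that in the parallel regime the Fourier support of $B_{par}$ is concentrated in a thin slab along the direction $\om(\ka_3)\approx\om(\ka_4)$; by Plancherel $I_{par}$ vanishes unless $\widehat{\la\psi_1,\beta\psi_2\ra}$ contributes from this slab, which geometrically enforces an analogous parallel alignment of $(\psi_1,\psi_2)$ and in particular $|k_1-k_2|\leq 10$ together with a matching dyadic angular scale $l_{12}$ on the $(\psi_1,\psi_2)$-interaction. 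For each matched pair of scales I re-apply Cauchy--Schwarz with a dyadic low-frequency projection $P_M$ selecting the relevant output frequency: the first factor is bounded via Proposition \ref{Pbil} iv) applied at angular scale $l_{12}$ (strong-strong, so the parallel regime is permitted), while the low-frequency parallel piece of $\la\psi_3,\beta\psi_4\ra$ is estimated by combining Bernstein (to exploit the thin Fourier support), a Strichartz bound on $\psi_3$ via Corollary \ref{cor:full-str}, and the $L^\infty_tL^2_x$-component of $\|\psi_4\|_{S^{s_4,w}_{k_4}}$, in the spirit of the parallel-interaction argument already present in the proof of Proposition \ref{Pbil} iv). The resulting dyadic sums in the matched scales close by geometric summation and contribute an admissible summand to $G(\mathbf{k})$.
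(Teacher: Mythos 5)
Your opening reduction is where the argument breaks. Decoupling $I$ by Cauchy--Schwarz into $\|\la\psi_1,\beta\psi_2\ra\|_{L^2}\,\|\la\psi_3,\beta\psi_4\ra\|_{L^2}$ and invoking Proposition \ref{Pbil} cannot produce a function $G(\mathbf{k})$ satisfying \eqref{G}. Test it on the regime $k_3\approx k_4\gg k_1,k_2$ (Case 7 of the paper's proof): your second factor is bounded by $2^{k_3/2}\|\psi_3\|_{S_{k_3}}\|\psi_4\|_{S^{w}_{k_4}}$ and the first by $2^{\min(k_1,k_2)/2}$, so the product exceeds the target $2^{(k_1+k_2+k_3-k_4)/2}$ by $2^{(k_3-\max(k_1,k_2))/2}$, which is exponentially large; the ``min-versus-sum discrepancy'' you cite gives nothing here because the loss sits in the $(3,4)$ bracket, not the $(1,2)$ bracket. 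Even if you sharpen this by restricting \eqref{bil3} to the angular scales $2^{-l}$ compatible with a low output frequency (i.e. $l\gtrsim k_4-\max(k_1,k_2)$), the best you reach is $G(\mathbf{k})\approx 1$ on sets such as $\{k_2\approx k_4,\ k_1,k_3\ \text{arbitrary and much smaller}\}$ (Cases 4--6), and a constant $G$ on such sets violates \eqref{G}: with only $\ell^2$ control on the four sequences the sum over the two free low frequencies is unbounded. The point the paper's proof exploits, and your proposal never uses for the main term, is that the two brackets must match not only in output frequency but in output \emph{modulation}; via Lemma \ref{lem:mod} this forces a dichotomy -- either the interaction angle in the high-high bracket is exponentially small (and \eqref{bil3} then yields factors like $2^{(k_1-k_3)/16}$, $2^{(k_1-k_2)/32}$), or some single factor carries very high modulation, which is then cashed in through the $\dot X^{\pm,\frac12,\infty}$ and $L^p_tL^2_x$ components and the trilinear estimates \eqref{TRI1}, \eqref{tris2}. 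Without this modulation interplay there is no mechanism in your argument to generate the off-diagonal decay that \eqref{G} requires.

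Two further points. First, your treatment of $I_{par}$ rests on the claim that the thin-slab Fourier support of the parallel $(3,4)$ interaction forces $|k_1-k_2|\leq 10$ and a matched angular scale for $(\psi_1,\psi_2)$; this is false. The output $\xi_3-\xi_4$ of two parallel caps at radius $2^{k_4}$ sweeps an $O(1)$-thick tube through the origin of length $\approx 2^{k_4}$, so it constrains the \emph{direction} of $\xi_1-\xi_2$ but not the relative sizes of $|\xi_1|,|\xi_2|$; configurations with $k_1\ll k_2\approx k_4$ survive. The paper instead estimates $I_{par}$ directly, either by $L^3_tL^6_x$ Strichartz with cap orthogonality or by recombining $\psi_1$ with $\psi_4$ and $\psi_2$ with $\psi_3$ via \eqref{bas1}. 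Second, for $k_4\leq 99$ the weak-norm bilinear bound \eqref{bil2} is not available with $\psi_4$ as the weak factor, so that region needs a separate (easy) argument, as in the paper's Case 2.
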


Next, we show how Lemmas \ref{lem:cunn1} and \ref{lem:cunn23} imply
Proposition \ref{pro:thnon}.
\begin{proof}[Proof of Prop. \ref{pro:thnon}]
  The estimate \eqref{cunn1} provides the $L^p_t L^2_x$ part of
  \eqref{cunn}. Next, we explain why \eqref{cunn2} implies the atomic
  part of \eqref{cunn}. The nonlinearity \[\mathcal{N}= P_{k_4}
  \Pi_{s_4}(D) ( \la \psi_1, \beta \psi_2 \ra \beta \psi_3)\]
  satisfies $\mathcal{N}=\tilde{P}_{k_4}\Pi_{s_4}(D) \mathcal{N}$ and
  has to be estimated in $N^{s_4}_{k_4}$. Using the duality
  \eqref{dual}, it suffices to test $\mathcal{N}$ against $\psi_4\in
  S^{s_4,w}_{k_4}$ and to prove the estimate
  \begin{equation} \label{dualin}\Big| \int \la P_{k_4}\Pi_{s_4}(D)
    \mathcal{N}, \psi_4 \ra dx dt \Big|\ls G(\mathbf{k}) \prod_{j=1}^3
    2^{\frac{k_j}2} \| \psi_j \|_{S^{s_j}_{k_j}} \cdot
    2^{-\frac{k_4}2} \| \psi_4 \|_{S^{s_4,w}_{k_4}}.
  \end{equation}
  We have
  \begin{align*}
    \int \la \mathcal{N}, \psi_4 \ra dx dt
    &  = \int \la \la  \psi_1, \beta \psi_2 \ra \beta \psi_3, \Pi_{s_4}(D) P_{k_4}\psi_4 \ra dx dt \\
    & = \int \la \psi_1, \beta \psi_2 \ra \la \psi_3, \beta
    \Pi_{s_4}(D) P_{k_4}\psi_4 \ra dx dt.
  \end{align*}
  Now, we split $\psi_j = \Pi_{+}(D) \psi_j + \Pi_{-}(D) \psi_j$, and
  each contribution to \eqref{dualin} is bounded by \eqref{cunn2}.
\end{proof}

\begin{proof}[Proof of Lemma \ref{lem:cunn1}] We will use the
  notation:
  \[
  TR= 2^{\frac{k_1}2} \| \psi_1 \|_{S^{s_1}_{k_1}} 2^{\frac{k_2}2} \|
  \psi_2 \|_{S^{s_2}_{k_2}} 2^{\frac{k_3}2} \| \psi_3
  \|_{S^{s_3}_{k_3}}.
  \]
  The argument is symmetric with respect to $k_1,k_2$, hence we can
  simply assume that $k_1 \leq k_2 $.
  
  We first consider the case $k_3 \leq k_1+20$, in which case $k_4
  \leq k_2+30$ or else the l.h.s.\ of \eqref{cunn1} vanishes. Using
  Strichartz and Prop.\ \ref{Pbil}, we obtain
  \[
  \begin{split}
    \| \la \psi_1, \beta \psi_2 \ra \beta \psi_3 \|_{L^p_t L^2_x} &
    \ls \| \la \psi_1, \beta \psi_2 \ra \|_{L^2}
    \| \psi_3 \|_{L^{\frac{2p}{2-p}}_t L^\infty_x} \\
    & \ls 2^{\frac{k_1}2} \| \psi_1 \|_{S^{s_1}_{k_1}}  \| \psi_2 \|_{S^{s_2}_{k_2}}   2^{(1+\frac{p-2}{2p})k_3}  \| \psi_3 \|_{S^{s_3}_{k_3}} \\
    & \ls 2^{\frac{p-2}{2p} k_4} 2^{\frac{k_3-k_2}2} 2^{\frac{p-2}{2p}
      (k_3-k_4)} TR
  \end{split}
  \]
  which is acceptable given that $0 \leq \frac{2-p}{2p} < \frac12$.
  
  If $k_1+20 \leq k_3 \leq k_2+20$ we use \eqref{eq:interp-bil2} and
  obtain
  \[
  \begin{split}
    \| \la \psi_1, \beta \psi_2 \ra \beta \psi_3 \|_{L^{p}_t L^2_x} &
    \ls \| \la \psi_1, \beta \psi_2 \ra \|_{L^{\frac{8p}{p+4}}_t
      L^2_x}
    \| \psi_3 \|_{L^{\frac{8p}{4-p}}_t L^\infty_x} \\
    & \ls 2^{(\frac78 - \frac1{2p})k_1} \| \psi_1 \|_{S^{s_1}_{k_1}}
    \| \psi_2 \|_{S^{s_2}_{k_2}}
    2^{(\frac98-\frac1{2p})k_3}  \| \psi_3 \|_{S^{s_3}_{k_3}} \\
    & \ls 2^{\frac{p-2}{2p} k_4} 2^{\frac{2-p}{2p} (k_4-k_2)}
    2^{(\frac38 - \frac1{2p})(k_1-k_2)}
    2^{(\frac58-\frac1{2p})(k_3-k_2)} TR
  \end{split}
  \]
  which is acceptable given that $\frac43 < p \leq 2$.
  
  Next we consider the case $k_2+20 \leq k_3$, in which case $k_4 \leq
  k_3+10$ or else the l.h.s.\ of \eqref{cunn1} vanishes. In this case
  the estimate \eqref{TRI1} gives the desired bound provided that
  $\frac43 < p \leq 2$.
\end{proof}
  
It remains to prove Lemma \ref{lem:cunn23}. Before we start to do so,
we analyze the modulation of a product of two waves as in \cite{BH}. We consider two
functions $\psi_1, \psi_2 \in S^{+}$ where their native modulation is
with respect to the quantity $|\tau - \la \xi \ra|$. However, for $\la
\psi_1 , \beta \psi_2 \ra$ we quantify the output modulation with
respect to $||\tau|-\la \xi \ra|$. We recall from \cite{BH} the
following lemma which contains the modulation localization claim which
will be used several times in the argument.

\begin{lem} \label{lem:mod} i) Let $k,k_1k_2\geq 100$ and $l\prec
  \min(k_1,k_2)$, and let $\kappa_1,\kappa_2 \in \mathcal{K}_{l}$,
  with $\dist(\kappa_1,\kappa_2)\approx 2^{-l}$, and assume that
  $u_j=\tilde{P}_{k_j,\kappa_j}\tilde{Q}^{s_j}_{\prec m}u_j$, where
  \[m=k_1+k_2-k-2l.\] Then, if $s_1=s_2$,
  \[\widehat{P_k(u_1\overline{u_2})}(\tau,\xi)=0 \text{ unless }
  ||\tau|-\la \xi \ra |\approx 2^{m}.\] ii) Using the same setup as in
  part i) but with $s_1=-s_2$ and $\dist( \kappa_1, -\kappa_2)\approx
  2^{-l}$, the same result applies with
  \[
  m=\min(k_1,k_2)-2l.
  \]
\end{lem}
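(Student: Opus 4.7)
The plan is to track the Fourier support of $P_k(u_1 \overline{u_2})$ directly. Writing this product as a spacetime convolution, the Fourier variables satisfy $(\tau,\xi) = (\tau_1-\tau_2, \xi_1-\xi_2)$ with $\xi_j \in \tilde{A}_{k_j,\kappa_j}$ and $\tau_j = s_j \la\xi_j\ra + O(2^{m-c})$ on the input supports, where $c > 30$ is the gap built into the definition of ``$\prec$''. Thus
\[
\tau = s_1 \la\xi_1\ra - s_2\la\xi_2\ra + O(2^{m-c}),
\]
and the task is to show $||\tau|-\la\xi\ra| \approx 2^m$ for $(\tau,\xi)$ in the output support.

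For part i), WLOG $s_1 = s_2 = +$, I would use the algebraic identity
\[
(\la\xi_1\ra - \la\xi_2\ra)^2 - \la\xi_1 - \xi_2\ra^2 = 1 - 2\bigl(\la\xi_1\ra\la\xi_2\ra - \xi_1\cdot\xi_2\bigr),
\]
and expand $\la\xi_1\ra\la\xi_2\ra - \xi_1\cdot\xi_2 = |\xi_1||\xi_2|\bigl(\sqrt{1+|\xi_1|^{-2}}\sqrt{1+|\xi_2|^{-2}} - \cos\theta\bigr)$ with $\theta = \angle(\xi_1,\xi_2) \approx 2^{-l}$. The hypothesis $l \prec \min(k_1,k_2)$ ensures the angular contribution $1-\cos\theta \approx 2^{-2l}$ dominates the bracket corrections of size $2^{-2k_j}$, yielding $\la\xi\ra^2 - (\la\xi_1\ra-\la\xi_2\ra)^2 \approx 2^{k_1+k_2-2l+1}$. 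Since $|\la\xi_1\ra-\la\xi_2\ra| \leq |\xi_1-\xi_2| = |\xi| \approx 2^k$, the complementary factor $\la\xi\ra + |\la\xi_1\ra-\la\xi_2\ra|$ is comparable to $2^k$, which gives
\[
\la\xi\ra - |\la\xi_1\ra-\la\xi_2\ra| \approx 2^{k_1+k_2-k-2l} = 2^m.
\]
Combining with the modulation slack $\tau = \la\xi_1\ra - \la\xi_2\ra + O(2^{m-c})$ (distinguishing the subcases $|\la\xi_1\ra - \la\xi_2\ra| > 2^{m-c+10}$, where $|\tau|$ has a definite sign, and the degenerate case $|\la\xi_1\ra - \la\xi_2\ra| \leq 2^{m-c+10}$, in which case $\la\xi\ra \approx 2^m$ directly) produces $||\tau|-\la\xi\ra| \approx 2^m$.

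For part ii), WLOG $s_1 = -s_2 = +$ with $\dist(\kappa_1,-\kappa_2) \approx 2^{-l}$, I repeat the computation with the opposite sign:
\[
(\la\xi_1\ra + \la\xi_2\ra)^2 - \la\xi_1-\xi_2\ra^2 = 1 + 2\bigl(\la\xi_1\ra\la\xi_2\ra + \xi_1\cdot\xi_2\bigr).
\]
Now $\xi_1$ and $-\xi_2$ nearly align, so $\cos(\angle(\xi_1,\xi_2)) = -1 + O(2^{-2l})$ and the bracket sum is $\approx 2^{k_1+k_2-2l}$. The essential geometric observation is that $|\xi_1-\xi_2| \approx |\xi_1|+|\xi_2|$, hence $k \approx \max(k_1,k_2)$, and the denominator $\la\xi\ra + \la\xi_1\ra+\la\xi_2\ra$ is comparable to $2^{\max(k_1,k_2)}$. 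Factoring the difference of squares then yields $\la\xi_1\ra+\la\xi_2\ra - \la\xi\ra \approx 2^{\min(k_1,k_2)-2l} = 2^m$, and consequently $||\tau|-\la\xi\ra| \approx 2^m$.

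The main obstacle is sharpness: one must verify that the $O(2^{m-c})$ input modulation slack never overwhelms the leading algebraic size $2^m$, which is precisely what the definite spectral gap in ``$\prec$'' guarantees. A secondary bookkeeping issue is confirming that the factor $\la\xi\ra + |\la\xi_1\ra \mp \la\xi_2\ra|$ in part i), respectively $\la\xi\ra + \la\xi_1\ra+\la\xi_2\ra$ in part ii), is sharply of order $2^k$, respectively $2^{\max(k_1,k_2)}$; this distinction between constructive and destructive addition of $\xi_1$ and $\xi_2$ is exactly where the two cases diverge and produces the different values of $m$.
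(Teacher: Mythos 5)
Your proof is correct and follows essentially the same route as the paper: reduce to free solutions using the modulation gap built into $\prec$, then compute the resonance $\la\xi\ra\mp|\la\xi_1\ra\mp\la\xi_2\ra|$ by a difference-of-squares identity in which the angular term $|\xi_1||\xi_2|(1\mp\cos\theta)\approx 2^{k_1+k_2-2l}$ dominates the mass corrections thanks to $l\prec\min(k_1,k_2)$, with denominator $\approx 2^k$ (resp.\ $2^{\max(k_1,k_2)}$). This matches the paper's argument for part ii) (and the cited three-dimensional computation for part i)), so no further comment is needed.
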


\begin{proof}
  i) The proof of the same result in \cite{BH} (where we worked in
  dimension $3$) does not involve the dimension of the physical space,
  thus it carries over verbatim to dimension $2$ for $s_1=s_2=+$. The
  argument $s_1=s_2=-$ is entirely similar.

  ii) Since the modulation of the inputs are much less than the
  claimed modulation of the output it is enough to prove the argument
  for free solutions. Let $(\xi_1, \la \xi_1 \ra)$ be in the support
  of $\hat{u}_1$ and $(-\xi_2, \la \xi_2 \ra)$ be in the support of
  $\overline{\hat{u}_2}$. Then, the angle between $\xi_1$ and $\xi_2$
  is $\approx 2^{-l}$. Let $\xi=\xi_1-\xi_2$ be of size $2^{k}$ and
  $\tau=\la \xi_1 \ra -\la \xi_2\ra$. Our aim is to prove that
  \[|\la \xi_1-\xi_2\ra-|\la \xi_1 \ra +\la \xi_2\ra||\approx 2^{m}.\]
  The claim follows from
  \begin{align*}
    &\la \xi_1-\xi_2\ra-|\la \xi_1 \ra + \la \xi_2\ra|=\frac{\la \xi_1-\xi_2\ra^2-(\la \xi_1\ra  + \la \xi_2\ra)^2}{\la \xi_1-\xi_2\ra+|\la \xi_1\ra  + \la \xi_2 \ra|}\\
    =& \frac{2 |\xi_1||\xi_2|(1+\cos(\angle(\xi_1,\xi_2)))}{\la
      \xi_1-\xi_2\ra+|\la \xi_1\ra +\la \xi_2 \ra|}
    +\mathcal{O}(2^{-\max(k_1,k_2)})
    \\
    \approx &2^{\min(k_1,k_2)}\angle(\xi_1,\xi_2)^2
  \end{align*}
  because by assumption we have $2^{\min(k_1,k_2)-2l}\gg
  2^{-\max(k_1,k_2)}$.
\end{proof}

\begin{proof}[Proof of Lemma \ref{lem:cunn23}]
  Without restricting the generality of the argument we prove
  \eqref{cunn2} for the $+$ choice in all terms. Once we finish the
  argument for the $+$ choice in all terms, we indicate how the other
  cases are treated.  Thus, for now, we drop all the $\pm$ and simply
  consider $\psi_j \in S^+_{k_j}$ and write $S_{k_j}=S^+_{k_j}$
  instead.

  For brevity, we denote the l.h.s.\ of \eqref{cunn2} as
  \begin{equation*}
    I:=\Big| \int \la \psi_1, \beta \psi_2 \ra \cdot \la \psi_3, \beta
    \psi_4 \ra dx dt\Big|
  \end{equation*}
  and the standard factor on the r.h.s.\ as
  \begin{equation*}
    J:=\prod_{j=1}^3 2^{\frac{k_j}2} \| \psi_j \|_{S_{k_j}} \cdot 2^{-\frac{k_4}2} \| \psi_4 \|_{S^{w}_{k_4}}.
  \end{equation*}
  Since the expression $I$ computes the zero mode of the product $\la
  \psi_1, \beta \psi_2 \ra \cdot \la \psi_3, \beta \psi_4 \ra$, it
  follows that $\la \psi_1, \beta \psi_2 \ra$ and $\la \psi_3, \beta
  \psi_4 \ra$ need to be localized at frequencies and modulations of
  comparable size, where the modulation is computed with respect to
  $||\tau|-\la \xi \ra|$. This will be repeatedly used in the argument
  below along with the convention that the modulations of $\psi_k,
  k=1,\ldots,4$ are with respect to $|\tau-\la \xi \ra|$, while the
  modulations of $\la \psi_1, \beta \psi_2 \ra$ and $\la \psi_3, \beta
  \psi_4 \ra$ are with respect to $||\tau|-\la \xi \ra|$.
  
  We also agree that by the angle of interaction in, say, $\la \psi_1,
  \beta \psi_2 \ra$ we mean the angle made by the frequencies in the
  support of $\hat{\psi}_1$ and $\hat{\psi}_2$, where we consider only
  the supports that bring nontrivial contributions to $I$.
  
  We organize the argument based on the size of the frequencies.
  There are a two easy cases we can easily dispose of.

  \underline{Case 1:  $\max(k_1,k_2,k_3,k_4) \leq 200$.}
  In this case we estimate
  \begin{align*}
    I \ls{}& \| \psi_1 \|_{L^3_tL^6_x} \| \psi_2  \|_{L^3_tL^6_x}  \| \psi_3 \|_{L^3_tL^6_x}  \| \psi_4 \|_{L^\infty_t L^2_x} \\
    \ls{}&  \| \psi_1 \|_{S_{k_1}} \| \psi_2 \|_{S_{k_2}}  \| \psi_3 \|_{S_{k_3}} \| \psi_4 \|_{S_{k_4}^w} \\
    \ls{}& J
  \end{align*}

  \underline{Case 2: $k_4< 100$.} Using \eqref{bil2} in the context of
  part iv) of Proposition \ref{Pbil} we obtain:
  \begin{align*}
    I \ls{}& \| \la \psi_1 , \beta \psi_2 \ra \|_{L^2}  \| \psi_3 \|_{L^4}  \| \psi_4 \|_{L^4} \\
    \ls{}& 2^{\frac{\min(k_1,k_2)}2} \| \psi_1 \|_{S_{k_1}} \| \psi_2 \|_{S_{k_2}}  2^{\frac{k_3}2} \| \psi_3 \|_{S_{k_3}} \| \psi_4 \|_{S_{k_4}^w} \\
    \ls{}& 2^{-\frac{\max(k_1,k_2)}2} J
  \end{align*}
  Given that, in order to account for nontrivial outputs, we need to
  consider only the case when $k_3 \prec \max(k_1,k_2)$, the above
  estimate suffices.

  We continue with the more delicate cases. In light of Case 2, from
  now on we work under the hypothesis that $k_4\geq 100$.

  \underline{Case 3: $k_4 \leq \min(k_1,k_2,k_3) + 10$.}
  If $k_3 \geq k_4+10$, then we use \eqref{bil2} and \eqref{bil3} to
  obtain
  \[
  |I| \ls 2^{\frac{\min(k_1,k_2)}2} \| \psi_1 \|_{S_{k_1}} \| \psi_2
  \|_{S_{k_2}} 2^{\frac{k_3}2} \| \psi_3 \|_{S_{k_3}} \| \psi_4
  \|_{S_{k_4}^w} \ls 2^{\frac{k_4-\max(k_1,k_2)}2} J.
  \]
  which is acceptable given that $k_3 \leq \max(k_1,k_2) +10$ (or else
  $I=0$).

  If $k_4 -10 \leq k_3 \leq k_4+9$ the above argument covers most of
  $I$ except
  \[
  I_{par}:=\Big| \sum_{\ka_3,\ka_4 \in \mathcal{K}_{k_4}: \atop
    \dist(\ka_3,\ka_4) \leq 2^{-k_4+3}} \int \la \psi_1, \beta \psi_2
  \ra \cdot \la \tilde P_{\ka_3} \psi_3, \beta \tilde P_{\ka_4} \psi_4
  \ra dx dt\Big|
  \]
  If $k_1,k_2 \leq k_4 +15$ this is estimated as follows:
  \[
  \begin{split}
    I_{par} \ls{}&  \| \psi_1 \|_{L^3_t L^6_x} \| \psi_2 \|_{L^3_t L^6_x}  2^{-k_4} \sum_{\ka_3,\ka_4 \in \mathcal{K}_{k_4}: \atop \dist(\ka_3,\ka_4) \leq 2^{-k_4+3}} \| \tilde P_{\ka_3} \psi_3\|_{L^3_t L^6_x} \| \tilde P_{\ka_4}  \psi_4 \|_{L^\infty_t L^2_x} \\
    \ls{}& 2^{\frac{2(k_1+k_2)}3 -k_4} \| \psi_1 \|_{S_{k_1}} \| \psi_2
    \|_{S_{k_2}}  \left( \sum_{\ka_3 \in \mathcal{K}_{k_4}}
      \| \tilde P_{\ka_3} \psi_3\|^2_{L^3_t L^6_x} \right)^\frac12
    \left(  \sum_{\ka_4 \in \mathcal{K}_{k_4}} \| \tilde P_{\ka_4} \psi_4\|^2_{L^\infty_t L^2_x}  \right)^\frac12 \\
     \ls{}& 2^{\frac{2(k_1+k_2+k_3)}3 -k_4} \| \psi_1 \|_{S_{k_1}} \| \psi_2 \|_{S_{k_2}} \| \psi_3 \|_{S_{k_3}}  \| \psi_4 \|_{S_{k_4}^w} \\
     \ls{}& J
  \end{split}
  \]
   where we have used that $|k_i-k_4| \leq 15$, for $i \in \{1,2,3\}$.

  If $k_1 \geq k_4 +15$, then $k_2 \geq k_4+10$. In addition, since
  $\la \psi_1, \beta \psi_2 \ra$ is supported at frequency $\ls
  2^{k_4}$, it follows that only the interactions between $\psi_1$ and
  $\psi_2$ making an angle $\ls 2^{k_4-k_1}$ have nontrivial
  contribution to $I$. Therefore we need to consider only
  \[
  I_{par}:=\Big| \sum_{\ka_1,\ka_2 \in \mathcal{K}_{k_1-k_4}: \atop
    \dist(\ka_1,\ka_2) \ls 2^{k_4-k_1}} \sum_{\ka_3,\ka_4 \in
    \mathcal{K}_{k_4}: \atop \dist(\ka_3,\ka_4) \leq 2^{-k_4+3}} \int
  \la \tilde P_{\ka_1} \psi_1, \beta \tilde P_{\ka_2} \psi_2 \ra \cdot
  \la \tilde P_{\ka_3} \psi_3, \beta \tilde P_{\ka_4} \tilde \psi_4
  \ra dx dt\Big|
  \]
  Now we use a similar argument to the one when $k_1,k_2 \leq k_4
  +15$:
  \[
  \begin{split}
    I_{par}  \ls{} & 2^{k_4-k_1} \sum_{\ka_1,\ka_2 \in
      \mathcal{K}_{k_1-k_4}: \atop
      \dist(\ka_1,\ka_2) \ls 2^{k_4-k_1}} \| \tilde P_{\ka_1} \psi_1 \|_{L^3_t L^6_x} \| \tilde P_{\ka_2} \psi_2 \|_{L^3_t L^6_x}  \\
    & \cdot  2^{-k_4} \sum_{\ka_3,\ka_4 \in \mathcal{K}_{k_4}: \atop \dist(\ka_3,\ka_4) \leq 2^{-k_4+3}} \| \tilde P_{\ka_3} \psi_3\|_{L^3_t L^6_x} \| \tilde P_{\ka_4}  \psi_4 \|_{L^\infty_t L^2_x} \\
    \ls{} & 2^{-k_1} \left( \sum_{\ka_1 \in \mathcal{K}_{k_1-k_4}} \|
      \tilde P_{\ka_1} \psi_1 \|^2_{L^3_t L^6_x} \right)^\frac12
    \left( \sum_{\ka_2 \in \mathcal{K}_{k_1-k_4}} \| \tilde
      P_{\ka_2} \psi_2 \|^2_{L^3_t L^6_x} \right)^\frac12 \\
   & \cdot  \left( \sum_{\ka_3 \in \mathcal{K}_{k_4}} \| \tilde
      P_{\ka_3} \psi_3\|^2_{L^3_t L^6_x} \right)^\frac12
    \left(  \sum_{\ka_4 \in \mathcal{K}_{k_4}} \| \tilde P_{\ka_4} \psi_4\|^2_{L^\infty_t L^2_x}  \right)^\frac12 \\
    \ls{} & 2^{\frac{2(k_1+k_2+k_3)}3 -k_1} \| \psi_1 \|_{S_{k_1}} \| \psi_2 \|_{S_{k_2}} \| \psi_3 \|_{S_{k_3}}  \| \psi_4 \|_{S_{k_4}^w} \\
    \ls{}& 2^{\frac23(k_4-k_1)} J
  \end{split}
  \]
  which suffices.

  \underline{Case 4: there are exactly two $i \in \{ 1,2,3 \}$ such that $k_4 \leq k_i + 10$.}\\
  {\it Case 4 a)} Assume that $k_3 \geq k_4-10$. Since the argument is
  symmetric in $k_1$ and $k_2$, it is enough to consider the scenario
  $k_1 < k_4-10 \leq k_2$. Note that $|k_2-k_3| \leq 12$.

  To streamline the argument we ignore for a moment that in the case
  $|k_3-k_4| \leq 9$ the proof below does not cover the estimate for
  $I_{par}$. We will explain at the end how to estimate this term.

  We claim that either the angle of interactions in $\la \psi_3,\beta
  \psi_4 \ra$ is $\ls 2^{\frac{k_1-k_4}{16}}$ or at least one factor
  $\psi_j, j=1,\ldots,4$ has modulation $\gs 2^{\frac{k_1+7k_4}8}$. To
  see this, suppose that the claim is false. Then, the modulation of
  $\la \psi_1, \beta \psi_2 \ra$ is $\ls 2^{\frac{k_1+7k_4}8}$ while
  it follows from part i) of Lemma \ref{lem:mod} that the modulation
  of $\la \psi_3, \beta \psi_4 \ra$ is $\gg
  2^{\frac{k_1+7k_4}8}$. This is not possible, hence the claim is
  true. Note that in using Lemma \ref{lem:mod} we are assuming that
  $k_3,k_4 \geq 100$. If this is not the case, that is $k_3=99$, then
  $k_1,k_2,k_3,k_4 \leq 200 $ and this is covered under Case 1.
	
  In the first subcase, where the angle of interaction in $\la
  \psi_3,\beta \psi_4 \ra$ is smaller than $2^{\frac{k_1-k_4}{16}}$,
  we use \eqref{bil2} and \eqref{bil3} to estimate
  \[
    I  \ls 2^{\frac{k_1-k_4}{16}} 2^{\frac{k_1}2} 2^{\frac{k_3}2} \| \psi_1 \|_{S_{k_1}}  \| \psi_2 \|_{S_{k_2}}  \| \psi_3 \|_{S_{k_3}} \| \psi_4 \|_{S^w_{k_4}} 
     \ls 2^{\frac{k_1-k_4}{16}} 2^{\frac{k_4-k_2}2}J
  \] 
   which is acceptable.

  We now consider the second subcase, in which the modulation of the
  factor $\psi_j$ is $\gs 2^{\frac{k_1+7k_4}{8}} \gs
  2^{\frac{k_1+k_4}2}$ for some $j\in \{1,2,3,4\}$.

  $j=1$: Since $\psi_1$ has modulation $\gs 2^{\frac{k_1+k_4}2}$, we
  use the Sobolev embedding for $\psi_1$ to obtain
  \begin{align*}
    I \ls{}& \| \psi_1 \|_{L^2_tL^\infty_x} \| \psi_2 \|_{L^\infty_t L^2_x} 2^{\frac{k_3}2} \| \psi_3 \|_{S_{k_3}} \| \psi_4 \|_{S_{k_4}^w} \\
    \ls{}& 2^{k_1} \| \psi_1 \|_{L^2}  \| \psi_2 \|_{S_{k_2}}  2^{\frac{k_3}2} \| \psi_3 \|_{S_{k_3}} \| \psi_4 \|_{S^w_{k_4}} \\
    \ls{}& 2^{\frac{k_1-k_4}4} 2^{\frac{k_4-k_2}2} J.
  \end{align*}

  $j=2$: Since $\psi_2$ has modulation $\gs 2^{\frac{k_1+k_4}2}$,
  Sobolev embedding for $\psi_1$ and \eqref{bil2} yields
  \begin{align*}
    I \ls{}& \| \psi_1 \|_{L^\infty} \| \psi_2 \|_{L^2} 2^{\frac{k_3}2} \| \psi_3 \|_{S_{k_3}} \| \psi_4 \|_{S_{k_4}^w} \\
    \ls{}& 2^{k_1} \| \psi_1 \|_{L^\infty_t L^2_x}
    2^{-\frac{k_1+k_4}4} \| \psi_2 \|_{S_{k_2}}
    2^{\frac{k_3}2} \| \psi_3 \|_{L^4} \| \psi_4 \|_{S^w_{k_4}} \\
    \ls{}& 2^{\frac{k_1-k_4}4} 2^{\frac{k_4-k_2}2} J.
  \end{align*}
 
  $j=3$: We use \eqref{eq:interp-bil2} and estimate as follows
  \begin{align*}
    I \ls{}& \| \la \psi_1, \beta \psi_2 \ra \|_{L^{\frac{p}{p-1}}_t L^2_x}  \| \psi_3 \|_{L^p_t L^2_x} \| \psi_4 \|_{L^\infty } \\
    \ls{}& 2^{\frac{k_1}{p}} \| \psi_1 \|_{S_{k_1}} \| \psi_2
    \|_{S_{k_2}}
    2^{(1-\frac1p) k_3} 2^{-\frac{k_1+7k_4}8} \| \psi_3 \|_{S_{k_3}} 2^{k_4} \| \psi_4 \|_{S^w_{k_4}} \\
    \ls{}& 2^{(\frac{1}p-\frac58)(k_1-k_3)} 2^{\frac{5k_4-4k_2-k_3}8}
    J.
  \end{align*}
  which is acceptable provided we choose a $\frac43 < p < \frac85$.
  
  $j=4$: We \eqref{eq:interp-bil2} and estimate as follows:
  \begin{align*}
    I \ls{}& \| \la \psi_1, \beta \psi_2 \ra \|_{L^r_t L^2_x} \| \psi_3 \|_{L^{\frac{2r}{r-2}}_t L^\infty_x}  \| \psi_4 \|_{L^2} \\
    \ls{}& 2^{(1-\frac1r)k_1} \| \psi_1 \|_{S_{k_1}} \| \psi_2
    \|_{S_{k_2}}
    2^{(\frac12+\frac1r)k_3}  \| \psi_3 \|_{S_{k_3}} 2^{-\frac{k_1+7k_4}{16}} \| \psi_4 \|_{S_{k_4}^w} \\
    \ls{}& 2^{(\frac{7}{16}-\frac1r)(k_1-k_3)} 2^{\frac{k_4-k_3}{16}}
    J
  \end{align*}
  and this is acceptable provided we pick $4 > r > \frac{16}7$.
   
  The argument is complete, except that we owe an estimate for
  $I_{par}$ in the case $|k_3-k_4| \leq 9$. Note that, in this case we
  also have $k_2 \leq k_4 +15$.  By recombining $\psi_1$ with
  $\psi_4$, $\psi_2$ with $\psi_3$ (at the cost of having no null
  structure) and using \eqref{bas1}, we estimate
  \[
  \begin{split}
    I_{par} \ls 2^{-k_4} 2^{k_1} \| \psi_{k_1} \|_{S_{k_1}} \|
    \psi_{k_4} \|_{S^w_{k_4}} 2^{k_2} \| \psi_{k_2} \|_{S_{k_2}} \|
    \psi_{k_3} \|_{S_{k_3}} \ls 2^{\frac{k_1-k_4}2} J.
  \end{split}
  \]

  {\it Case 4 b)} Assume now that $k_3 \leq k_4-10$, hence $k_1,k_2
  \geq k_4-10$ and $|k_1-k_2| \leq 12$. Here we claim that either the
  angle of interactions in $\la \psi_1,\beta \psi_2 \ra$ is $\ls
  2^{\frac{k_3-k_4}{16}} 2^{k_4-k_2}$ or at least one factor $\psi_j,
  j=1,\ldots,4$ has modulation $\gs 2^{\frac{k_3+7k_4}8}$. Indeed, if
  the claim is false, it follows from Lemma \ref{lem:mod}, part i),
  that the modulation of $\la \psi_1, \beta \psi_2 \ra$ is $\gg
  2^{\frac{k_3+7k_4}8}$ while the modulation of $\la \psi_3, \beta
  \psi_4 \ra$ is $\ll 2^{\frac{k_3+7k_4}8}$. This is not possible,
  hence the claim is true. Note that in using Lemma \ref{lem:mod} we
  are assuming that $k_1,k_2 \geq 100$. If this is not the case, that
  is either $k_1=99$ or $k_2=99$, then $k_1,k_2,k_3,k_4 \leq 200$ the
  argument is provided in Case 1.
   
  In the first subcase the angle of interaction in $\la \psi_1,\beta
  \psi_2 \ra$ is smaller than $2^{\frac{k_3-k_4}{16}}
  2^{k_4-k_2}$. Then, we use \eqref{bil3} to estimate the contribution
  of $\la \psi_1,\beta \psi_2 \ra$ and \eqref{bil2} to estimate the
  contribution of $\la \psi_3,\beta \psi_4 \ra$. This gives $I \ls
  2^{\frac{k_3-k_4}{16}} 2^{k_4-k_2} J$ which is acceptable.

  In the second subcase, where at least one modulation is high, one
  proceeds in a similar manner to Case 2a) above. We indicate the
  starting point in each case and leave the details to the reader.
  
  $j=1$: We proceed as in the case $j=4$, Case 2a):
  \begin{align*}
    I \ls \| \psi_1 \|_{L^2} \| \psi_2 \|_{L^{\frac{2p}{p-2}}_t
      L^\infty_x} \| \la \psi_3 , \beta \psi_4 \ra \|_{L^p_t L^2_x}.
  \end{align*}

  $j=2$: Identical to the case $j=1$.

  $j=3$: We proceed as in the case $j=1$, Case 2a):
  \begin{align*}
    I \ls{}& 2^{\frac{k_1}2} \| \psi_1 \|_{S_{k_1}} \| \psi_2
    \|_{S_{k_2}} \| \psi_3 \|_{L^2_t L^\infty_x} \| \psi_4
    \|_{L^\infty_t L^2_x }.  \end{align*}
   
  $j=4$: We proceed as in the case $j=2$, Case 2b):
  \begin{align*}
    I \ls{} 2^{\frac{k_1}2} \| \psi_1 \|_{S_{k_1}} \| \psi_2
    \|_{S_{k_2}} \| \psi_3 \|_{L^\infty} \| \psi_4 \|_{L^2 }.
  \end{align*}
 
  \underline{Case 5: $| k_2 -k_4 | \leq 2$ and $k_1,k_3 \leq k_4
    -10$.}  Without restricting the generality of the argument, we may
  assume that $k_1 \leq k_3$.
 
  We claim that either the angle of interaction in $ \la \psi_3, \beta
  \psi_4 \ra$ is $\ls 2^{\frac{k_1-k_3}{16}}$ or one factor $\psi_j,
  j=1,\ldots,4$ has modulation $\gs 2^{\frac{k_1+7k_3}8}$. Indeed, if
  all modulations of the functions involved are $\ll
  2^{\frac{k_1+7k_3}8}$, then $ \la \psi_1, \beta \psi_2 \ra$ is
  localized at modulation $\ls 2^{\frac{k_1+7k_3}8}$.  This forces $
  \la \psi_3, \beta \psi_4 \ra$ to be localized at modulation $\ls
  2^{\frac{k_1+7k_3}8}$, hence the angle of interaction is $\ls
  2^{\frac{k_1-k_3}{16}}$ by Lemma \ref{lem:mod}, part i). Note that
  in using Lemma \ref{lem:mod} we are assuming that $k_3,k_4 \geq
  100$. If this is not the case, that is $k_3=99$, then $k_1=99$ and
  the estimate $ I \ls J$ suffices.

  In the first subcase, when the angle of interaction in $ \la \psi_3,
  \beta \psi_4 \ra$ is $\ls 2^{\frac{k_1-k_3}{16}}$, we use
  \eqref{bil3} to obtain
  \[
  I \ls 2^{\frac{k_1-k_3}{16}} 2^{\frac{k_1}2} \| \psi_1 \|_{S_{k_1}}
  \| \psi_2 \|_{S_{k_2}} 2^{\frac{k_3}2} \| \psi_3 \|_{S_{k_3}} \|
  \psi_4 \|_{S_{k_4}^w} \ls 2^{\frac{k_1-k_3}{16}} J.
  \]

  Next, we consider the second subcase when the factor $\psi_j$ has
  modulation $\gs 2^{\frac{k_1+7k_3}8} \gs 2^{\frac{k_1+3k_3}4} $ for
  some $j \in \{1,2,3,4\}$:
 
  $j=1$: The modulation of $\psi_1$ is $\gs 2^{\frac{k_1+3k_3}4}$, so
  we use Sobolev embedding for $\psi_1$ and \eqref{bil2} for $\la
  \psi_3, \beta \psi_4 \ra$ to obtain
  \begin{align*}
    I \ls{}& \| \psi_1 \|_{L^2_t L^\infty_x} \| \psi_2 \|_{L^\infty_t
      L^2_x} 2^{\frac{k_3}2}
    \| \psi_3 \|_{S_{k_3}} \| \psi_4 \|_{S_{k_4}^w} \\
    \ls{}& 2^{k_1} \| \psi_1 \|_{L^2} \| \psi_2 \|_{S_{k_2}}
    2^{\frac{k_3}2}
    \| \psi_3 \|_{S_{k_3}} \| \psi_4 \|_{S_{k_4}^w} \\
    \ls{}& 2^{k_1} 2^{-\frac{k_1+3k_3}8} \| \psi_1 \|_{S_{k_1}} \|
    \psi_2 \|_{S_{k_2}} 2^{\frac{k_3}2}
    \| \psi_3 \|_{S_{k_3}} \| \psi_4 \|_{S_{k_4}^w} \\
    \ls{}& 2^{\frac{k_1-k_3}8} J.
  \end{align*}

  $j=2$: Here, the modulation of $\psi_2$ is $\gs
  2^{\frac{k_1+3k_3}4}$ and we proceed as above to obtain
  \begin{align*}
    I \ls{}& \| \psi_1 \|_{L^\infty} \| \psi_2 \|_{L^2}
    2^{\frac{k_3}2}
    \| \psi_3 \|_{S_{k_3}} \| \psi_4 \|_{S_{k_4}^w} \\
    \ls{}& 2^{k_1} \| \psi_1 \|_{L^\infty_t L^2_x}
    2^{-\frac{k_1+3k_3}8} \| \psi_2 \|_{S_{k_2}} 2^{\frac{k_3}2}
    \| \psi_3 \|_{S_{k_3}} \| \psi_4 \|_{S_{k_4}^w} \\
    \ls{}& 2^{\frac{k_1-k_3}8} J.
  \end{align*}

  $j=3$: The modulation of $\psi_3$ is $\gs 2^{\frac{k_1+7k_3}8}$, we
  use the Sobolev embedding for $\psi_3$ to obtain
  \begin{align*}
    I \ls{}& \| \la \psi_1, \beta \psi_2 \ra \|_{L^{\frac{p}{p-1}}_t L^2_x}  \| \psi_3 \|_{L^{p}_t L^\infty_x} \| \psi_4 \|_{L^\infty_t L^2_x } \\
    \ls{} & \| \la \psi_1, \beta \psi_2 \ra \|_{L^{\frac{p}{p-1}}_t L^2_x}  2^{k_3} \| \psi_3 \|_{L^p_t L^2_x} \| \psi_4 \|_{L^\infty_t L^2_x } \\
    \ls{}& 2^{ \frac{k_1}{p}} \| \psi_1 \|_{S_{k_1}} \| \psi_2
    \|_{S_{k_2}}
    2^{(2-\frac1p) k_3} 2^{-\frac{k_1+7k_3}8} \| \psi_3 \|_{S_{k_3}}  \| \psi_4 \|_{S^w_{k_4}} \\
    \ls{}& 2^{(\frac{1}p -\frac58)(k_1-k_3)} J.
  \end{align*}
  which is acceptable provided we choose a $\frac43 < p < \frac85$.
  
  $j=4$: Since the modulation of $\psi_4$ is $\gs
  2^{\frac{k_1+7k_3}8}$, we estimate as follows
  
  \begin{align*}
    I \ls{}& \| \la \psi_1, \beta \psi_2 \ra \|_{L^p_t L^2_x} \| \psi_3 \|_{L^{\frac{2p}{p-2}}_t L^\infty_x}  \| \psi_4 \|_{L^2} \\
    \ls{}& 2^{(1-\frac1p)k_1} \| \psi_1 \|_{S_{k_1}} \| \psi_2
    \|_{S_{k_2}}
    2^{(\frac12+\frac1p)k_3}  \| \psi_3 \|_{S_{k_3}} 2^{-\frac{k_1+7k_3}{16}} \| \psi_4 \|_{S_{k_4}^w} \\
    \ls{}& 2^{(\frac{7}{16}-\frac1p)(k_1-k_3)} J
  \end{align*}
  and this is acceptable provided we pick $4 > p > \frac{16}7$.
  
  \underline{Case 6: $| k_1 -k_4 | \leq 2$ and $k_2,k_3 \leq k_4
    -10$.} By switching the roles of $\psi_1$ and $\psi_2$, this case
  is entirely similar to Case 5.
  
  \underline{Case 7: $|k_3-k_4| \leq 2$ and $k_1,k_2 \leq k_4 -
    10$}. Without loss of generality we assume $k_1 \leq k_2$.
  Since $|k_3-k_4| \leq 2$ there will be a problem with estimating
  $I_{par}$.  We estimate this term the same way we did in Case 3 (see
  $k_1,k_2 \leq k_4+15$ part there) to obtain:
  $
  I_{par} \ls 2^{\frac{k_1+k_2-2k_4}6} J$
  and this is fine. As a consequence, in the rest of the argument we
  can tacitly ignore that the estimates we provide do not work for the
  $I_{par}$ part of $I$.

  The key observation is that either the angle of interaction between
  $\psi_3$ and $\psi_4$ is $\ls 2^{\frac{k_1-k_2}{16}} 2^{k_2-k_3}$ or
  at least one factor has modulation $\gs
  2^{\frac{k_1+7k_2}8}$. Indeed, if all modulations are $\ll
  2^{\frac{k_1+7k_2}8}$, then the modulation of $\la
  \psi_1,\beta\psi_2\ra$ is $\ls 2^{\frac{k_1+7k_2}8}$ and part i) of
  Lemma \ref{lem:mod} implies the claim. Note that in using Lemma
  \ref{lem:mod} we are assuming that $k_3,k_4 \geq 100$. If this is
  not the case, that is $k_3=99$, then $k_1,k_2,k_3,k_4 \leq 200$ and
  the argument is provided in Case 1.

  We consider the first subcase, when the angle of interaction between
  $\psi_3$ and $\psi_4$ is $\ls 2^{\frac{k_1-k_2}{16}}
  2^{k_2-k_3}$. Using \eqref{bil2} and \eqref{bil3} we estimate
  \[
  I \ls 2^{\frac{k_1}2} \| \psi_1 \|_{S_{k_1}} \| \psi_2 \|_{S_{k_2}}
  2^{\frac{k_3}2} 2^{\frac{k_1-k_2}{32}} 2^{\frac{k_2-k_3}2} \| \psi_3
  \|_{S_{k_3}} \| \psi_4 \|_{S_{k_4}^w} \ls 2^{\frac{k_1-k_2}{32}} J.
  \]
  
  In the second subcase, $\psi_j$ has modulation $\gs
  2^{\frac{k_1+7k_2}8}$ for some $j \in \{1,2,3,4\}$.
  
  $j=1$: The modulation of $\psi_1$ is $\gs
  2^{\frac{k_1+7k_2}{8}}$. Using \eqref{tris2} with $p=2$ for $\psi_2,
  \psi_3, \psi_4$, and the Sobolev embedding for $\psi_1$ we estimate
  \begin{align*}
    I \ls{}& \| \psi_1 \|_{L^2_t L^\infty_x} 2^{\frac{k_2}2} \| \psi_2 \|_{S_{k_2}} \| \psi_3 \|_{S_{k_3}} \| \psi_4 \|_{S^w_{k_4}} \\
    \ls{}& 2^{k_1} \| \psi_1 \|_{L^2}  2^{\frac{k_2}2} \| \psi_2 \|_{S_{k_2}}  \| \psi_3 \|_{S_{k_3}} \| \psi_4 \|_{S_{k_4}^w} \\
    \ls{}& 2^{ k_1} 2^{-\frac{k_1+7k_2}{16}} \| \psi_1 \|_{S_{k_1}}
    2^{\frac{k_2}2} \| \psi_2 \|_{S_{k_2}}  \| \psi_3 \|_{S_{k_3}} \| \psi_4 \|_{S_{k_4}^w} \\
    \ls{}& 2^{\frac{7}{16}(k_1-k_2)} J .
  \end{align*}
  
  $j=2$: Using \eqref{tris2} for $\psi_1, \psi_3, \psi_4$ and the
  Sobolev embedding for $\psi_2$ we proceed as follows:
  \begin{align*}
    I \ls{}& 2^{(1-\frac1q)k_1}\| \psi_1 \|_{S_{k_1}} \| \psi_2 \|_{L^{\frac{q}{q-1}}_t L^\infty_x} \| \psi_3 \|_{S_{k_3}} \| \psi_4 \|_{S^w_{k_4}} \\
    \ls{}& 2^{(1-\frac1q)k_1} \| \psi_1 \|_{S_{k_1}}  2^{k_2} \| \psi_2 \|_{L^{\frac{q}{q-1}}_t L^2_x}  \| \psi_3 \|_{S_{k_3}} \| \psi_4 \|_{S_{k_4}^w} \\
    \ls{}& 2^{(1-\frac1q)k_1} \| \psi_1 \|_{S_{k_1}}
    2^{k_2} 2^{\frac{k_2}q} 2^{-\frac{k_1+7k_2}8} \| \psi_2 \|_{S_{k_2}}  \| \psi_3 \|_{S_{k_3}} \| \psi_4 \|_{S_{k_4}^w} \\
    \ls{}& 2^{(\frac38 - \frac1q)(k_1-k_2)} J.
  \end{align*}
  which is acceptable as long as $p=\frac{q}{q-1} \in
  (\frac43,\frac85)$ and $\frac1q<\frac38$, which is both satisfied as
  long as $\frac83<q<4$.

  $j=3$ and $j=4$: Here we assume that $\psi_3$ and $\psi_4$ have
  modulation $\gs 2^{\frac{k_1+7k_2}8}$. In this case we estimate
  \begin{align*}
    I \ls{}& \| \psi_1 \|_{L^\infty} \| \psi_2 \|_{L^\infty} \|\psi_3\|_{L^2} \|\psi_4\|_{L^2} \\
    \ls{}& 2^{k_1+k_2} \| \psi_1 \|_{S_{k_1}} \| \psi_2 \|_{S_{k_2}}
    2^{-\frac{k_1+7k_2}{8}}\| \psi_3 \|_{S_{k_3}} \| \psi_4 \|_{S_{k_4}^w} \\
    \ls{}& 2^{\frac38(k_1-k_2)} J.
  \end{align*}

  $j=3$ (only): The modulation of $\psi_3$ is $\gs
  2^{\frac{k_1+7k_2}8}$ and all the other terms have modulation $\ll
  2^{\frac{k_1+7k_2}8}$. In this case we note that the angle of
  interaction between $\psi_2$ and $\psi_4$ is $\gs
  2^{\frac{k_1-k_2}{16}}$ or else their interaction has modulation
  $\ll 2^{\frac{k_1+7k_2}8}$ and this cannot be changed by $\psi_1$ to
  match the modulation of $\psi_3$. Thus combine $\psi_2$ and
  $\psi_4$, use \eqref{bas1} to obtain
  
  \begin{align*}
    I \ls{}& \| \psi_1 \|_{L^\infty}  2^{\frac{k_2}2} 2^{-\frac{k_1-k_2}{32}} \| \psi_2 \|_{S_{k_2}}  \| \psi_4 \|_{S_{k_4}} \|\psi_3\|_{L^2}  \\
    \ls{}& 2^{k_1} \| \psi_1 \|_{S_{k_1}} 2^{\frac{k_2}2}
    2^{-\frac{k_1-k_2}{32}} \| \psi_2 \|_{S_{k_2}}
    2^{-\frac{k_1+7k_2}{16}}\| \psi_3 \|_{S_{k_3}} \| \psi_4 \|_{S_{k_4}^w} \\
    \ls{}& 2^{(\frac7{16}-\frac1{32})(k_1-k_2)} J.
  \end{align*}

  $j=4$ (only): We change the role of $\psi_3$ and $\psi_4$ in the
  above argument.

  \vspace{.2in}
 
  We are now done with the analysis of \eqref{cunn2} in the case
  $s_1=s_2=s_3=s_4=+$. It is obvious that the same argument works for
  $s_1=s_2=s_3=s_4=-$. Next we indicate how the other sign choices can
  be dealt with, by highlighting the similarities and differences from
  the choice $s_1=s_2=s_3=s_4=+$. We do this by going over each case.
 
  No changes are needed in the easy cases: Case 1 and Case 2.

  \underline{Case 3: $k_4 \leq \min(k_1,k_2,k_3) + 10$.}
  Here the only part that needs to be adjusted is the last scenario
  when $k_4-10 \leq k_3 \leq k_4+9 , k_1 > k_4+15, k_2 > k_4+10$ and
  $s_1=-s_2$. As already argued there, only the interactions between
  $\psi_1$ and $\psi_2$ making an angle $\ls 2^{k_4-k_1}$ have
  nontrivial contribution to $I$, that is only pairs $\la \tilde
  P_{\ka_1} \psi_1, \beta \tilde P_{\ka_2} \psi_2 \ra$ with
  $d(\ka_1,\ka_2) \ls 2^{k_4-k_1}$. But this implies $d(\ka_1,-\ka_2)
  \approx 1$, and we claim that at least one factor has modulation
  $\gs 2^{k_1}$. Indeed, otherwise all factors have modulations $\ll
  2^{k_1}$ from which we obtain two contradictory results: $\la
  \psi_1, \beta \psi_2\ra$ has modulation $\approx 2^{k_1}$ (on behalf
  of part ii) of Lemma \ref{lem:mod}) while $\la \psi_3, \beta \psi_4
  \ra$ has modulation $\ll 2^{k_1}$.
  
  Now it is an easy exercise to establish the desired estimate, given
  that at least one factor has modulation $\gs 2^{k_1}$.

  \underline{Case 4: there are exactly two $i \in \{ 1,2,3 \}$ such that $k_4 \leq k_i + 10$.}\\
  {\it Case 4 a)} Assume that $k_3 \geq k_4-10$. The argument is the
  same if $s_3=s_4$.  If $s_3=-s_4$ then the new claim is: either the
  angle of interactions in $\la \psi_3,\beta \psi_4 \ra$ is
  $\pi+\alpha$ with $|\alpha| \ls 2^{\frac{k_1-k_4}{16}}$ or at least
  one factor $\psi_j, j=1,\ldots,4$ has modulation $\gs
  2^{\frac{k_1+7k_4}8}$. This claim is proved in a similar manner,
  just that now we invoke part ii) of Lemma \ref{lem:mod}. Then the
  rest of the argument is carried in a similar manner.
  
  {\it Case 4 b)} Assume that $k_3 \leq k_4-10$, hence $k_1,k_2 \geq
  k_4-10$ and $|k_1-k_2| \leq 12$.  If $s_1=s_2$ the proof is the
  same.

  If $s_1=-s_2$ and $k_1,k_2 \leq k_4+10$, then the claim there is
  modified as follows: either the angle of interactions in $\la
  \psi_1,\beta \psi_2 \ra$ is $\pi+\alpha$ with $|\alpha| \ls
  2^{\frac{k_3-k_4}{16}}$ or at least one factor $\psi_j,
  j=1,\ldots,4$ has modulation $\gs 2^{\frac{k_3+7k_4}8}$.  This is
  proved using part ii) of Lemma \ref{lem:mod}. Then the rest of the
  argument follows in a similar manner.

  If $s_1=-s_2$ and $\max(k_1,k_2) \geq k_4+11$, in which case
  $k_1,k_4 \geq k_4+6$, then only interactions at angle $\ls 1$ in
  $\la \psi_1,\beta \psi_2 \ra$ contribute to $I$ given that the
  output $\la \psi_1,\beta \psi_2 \ra$ is localized at much lower
  frequency. Using part ii) of Lemma \ref{lem:mod} we conclude that at
  least one factor $\psi_j$ has modulation $\gs 2^{k_1}$ and then the
  argument becomes easier.

  \underline{Case 5: $| k_2 -k_4 | \leq 2$ and $k_1,k_3 \leq k_4
    -10$.}  Without restricting the generality of the argument, we may
  assume that $k_1 \leq k_3$.
  
  No modification is needed if $s_3=s_4$. If $s_3=-s_4$ then the claim
  is modified to: either the angle of interaction in $ \la \psi_3,
  \beta \psi_4 \ra$ is $\pi+\alpha$ with $|\alpha| \ls
  2^{\frac{k_1-k_3}{16}}$ or one factor $\psi_j, j=1,\ldots,4$ has
  modulation $\gs 2^{\frac{k_1+7k_3}8}$. This is done using part ii)
  of Lemma \ref{lem:mod}.  The rest of the argument is similar.
  
  \underline{Case 6: $| k_1 -k_4 | \leq 2$ and $k_2,k_3 \leq k_4
    -10$.} By switching the roles of $\psi_1$ and $\psi_2$, this case
  is entirely similar to Case 5.
  
  \underline{Case 7: $|k_3-k_4| \leq 2$ and $k_1,k_2 \leq k_4 -
    10$}. Without loss of generality we assume $k_1 \leq k_2$.
  No modification is needed if $s_3=s_4$. If $s_3=-s_4$ then only
  interactions at angle $\ls 1$ in $\la \psi_3,\beta \psi_4 \ra$
  contribute to $I$ given that the output $\la \psi_3,\beta \psi_4
  \ra$ is localized at much lower frequency. Using part ii) of Lemma
  \ref{lem:mod} we conclude that at least one factor $\psi_j$ has
  modulation $\gs 2^{k_4}$ and then the argument becomes easier.
\end{proof}

Based on Theorem \ref{thm:thnon} we can now prove Theorem
\ref{thm:main} concerning the global well-posedness and scattering of
the cubic Dirac equation for small data.
\begin{proof}[Proof of Theorem \ref{thm:main}]
  In Section \ref{sect:setupD} we reduced the study of the cubic Dirac
  equation to the study of the system \eqref{CDsys}.  In the
  nonlinearity of \eqref{CDsys} we split the functions into
  $\psi=\psi_+ + \psi_-$ where $\psi_\pm = \Pi_\pm \psi$ and note that
  $\psi_\pm = \Pi_\pm \psi_\pm$. Using the nonlinear estimate in
  Theorem \ref{thm:thnon} and the linear estimates in Corollary
  \ref{cor:lin}, a standard fixed point argument in a small ball in
  the space $S^{+,\frac12}_C(I) \times S^{-,\frac12}_C(I)$ gives local
  existence on every time interval $I$ containing $0$, uniqueness and
  Lipschitz continuity of the flow map for small initial data
  $(\psi_+(0),\psi_-(0))\in H^\frac12(\R^2)\times
  H^\frac12(\R^2)$. Since all the bounds are independent on the size
  of $I$, this implies global existence, uniqueness and Lipschitz
  continuity of the flow map for small initial data
  $(\psi_+(0),\psi_-(0))\in H^\frac12(\R^2)\times H^\frac12(\R^2)$.

  Concerning scattering, we simply use the fact that $\psi_\pm \in
  V^2_\pm H^\frac12$: this is obtained first on every time interval
  $I$ with bounds independent of the size of $I$ which then implies
  the global bound on $\R$.
\end{proof}

\subsection*{Acknowledgement}
The first author was supported in part by NSF grant
  DMS-1001676.  The second author acknowledges support from the German
  Research Foundation, Collaborative Research Center 701.

\bibliographystyle{plain} \bibliography{cubic-dirac-2d-refs}

\end{document}